\documentclass[12pt,leqno]{amsart}
\usepackage{amsmath,amssymb,amsfonts}
\usepackage{eucal,enumerate,graphicx}

\pagestyle{plain} 
\setlength{\textwidth}{6.5in}
\setlength{\oddsidemargin}{0.0in}
\setlength{\evensidemargin}{0.0in}
\setlength{\textheight}{9in}
\setlength{\topmargin}{-.5in}

\newtheorem{lem}{Lemma}[section]
\newtheorem{cor}[lem]{Corollary}
\newtheorem{prop}[lem]{Proposition}
\newtheorem{thm}[lem]{Theorem}

  \newtheorem{exama}[lem]{Example}
\newenvironment{exam}{\begin{exama}\rm}{\end{exama}}
\numberwithin{equation}{section}

\renewcommand{\phi}{\varphi}                 
\renewcommand{\epsilon}{\varepsilon}
\newcommand\eps{\varepsilon}
\newcommand\eset{\varnothing}
\newcommand\setm{\setminus}

\newcommand\cupdot {\mbox{\hspace{.15em}$\cup$\hspace{-.47em}$\cdot$\hspace{.4em}}} 
\newcommand\inv{^{-1}}
\newcommand\transpose{^{\text{\rm T}}}
\newcommand\textb{\text{\rm b}}
\newcommand\pib{\pi_{\textb}}

\newcommand\Lat{\operatorname{Lat}}
\newcommand\clos{\operatorname{clos}}
\newcommand\bcl{\operatorname{bcl}}
\newcommand\rk{\operatorname{rk}}

\renewcommand\Im{\operatorname{Im}}           
\newcommand\full{^\bullet}
\newcommand\Char{\operatorname{char}}
\newcommand\CP{\operatorname{CP}}

\newcommand\Beta{{\mathrm B}}
\newcommand\Eta{{\mathrm H}}

\newcommand\bv{\mathbf{v}}

\newcommand\bx{\mathbf{x}}
\newcommand\by{\mathbf{y}}

\newcommand\cA{\mathcal{A}}
\newcommand\cB{\mathcal{B}}
\newcommand\cC{\mathcal{C}}
\newcommand\cD{\mathcal{D}}
\newcommand\cH{\mathcal{H}}
\newcommand\cL{\mathcal{L}}
\newcommand\cP{\mathcal{P}}
\newcommand\cS{\mathcal{S}}

\newcommand\bbR{\mathbb{R}}
\newcommand\bbZ{\mathbb{Z}}

\newcommand\F{\mathbf{F}}
\newcommand\B{\mathbf{b}}
\newcommand\brho{\boldsymbol\rho}

\begin{document}

\begin{center}
{\sc 
{\Large 
Signed Graphs and Geometry}
}

\bigskip
{\sc 
{\large Thomas Zaslavsky} 
}
\\[10pt]
Department of Mathematical Sciences \\
Binghamton University \\
Binghamton, NY 13902-6000, U.S.A.\\
E-mail:  {\tt zaslav@math.binghamton.edu}

\end{center}

\bigskip

\pagestyle{myheadings}\markboth{{\sc 
Signed Graphs and Geometry
}}{{\sc Thomas Zaslavsky}}

\vfill


\section*{Introduction}

These lecture notes are a personal introduction to signed graphs, concentrating on the aspects that have been most persistently interesting to me.  They are just a few corners of signed graph theory; I am leaving out a great deal.  The emphasis is on the way signed graphs arise naturally from geometry, especially from the geometry of the classical root systems.  Most of the properties I discuss generalize those of unsigned graphs, but the constructions and proofs are often more complicated.  
My aim is a coherent presentation of the subject, with a few illustrative proofs and adequate references.  Hence the arrangement of the notes is topical with only occasional remarks about the historical course of development.  
Though this is mainly an expository survey, some of the results have not hitherto been published.

For a fairly comprehensive list of articles on signed graphs, generalizations, and related work see \cite{BSG}; for (much of the) terminology see \cite{Gloss}.  
The principal reference for most of the more elementary properties of signed graphs treated here is Zaslavsky (1982a).  A simple introduction to the hyperplane geometry is Zaslavsky (1981a).  
(\emph{N.B.}  Citations in the style Name (YEARa) refer to author Name's item (YEARa) in \cite{BSG}.)  
Many of my articles can be downloaded from my Web site, 
\begin{center}
{\tt http://www.math.binghamton.edu/zaslav/Tpapers/}
\end{center}

Now, bon voyage!  
\emph{Shubha Yatra!}

\vfill
\section{Graphs}\label{g}

We begin with a review of graph theory.  Much is familiar but signed graphs require several extensions of the ordinary theory.

A graph is $\Gamma = (V,E)$, where $V := V(\Gamma)$ is the \emph{vertex set} and $E := E(\Gamma)$ is the \emph{edge set}.  
In these lectures all graphs are finite.

\subsubsection*{Notation}
\begin{itemize}
\item The \emph{set sum} or \emph{symmetric difference} of two sets $A$ and $B$ is denoted by $$A \oplus B := (A \setm B) \cup (B \setm A).$$
\item $n := |V|$, called the \emph{order} of $\Gamma$.
\item $V(e)$ is the multiset of vertices of the edge $e$.
\item If $S \subseteq E$, $V(S)$ is the set of endpoints of edges in $S$.
\item If $S \subseteq E$, its \emph{complement} is $S^c := E \setm S$.
\item If $X \subseteq V$, its \emph{complement} is $X^c := V \setm X$.
\end{itemize}

\subsubsection*{Edges and edge sets}
\begin{itemize}
\item We allow multiple edges as well as loops and oddball objects called half and loose edges.
\item There are four kinds of edge:  \\[6pt]
A \emph{link} has two distinct endpoints.    	\hfill\includegraphics{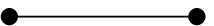}\hbox to 2in{}\\[6pt]
A \emph{loop} has two equal endpoints.   	\hfill\raisebox{-3pt}{\includegraphics{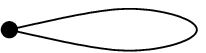}}\hbox to 2in{}\\[6pt]
A \emph{half edge} has one endpoint.      		\hfill\raisebox{-2pt}{\includegraphics{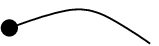}}\hbox to 2.2in{}\\[5pt]
A \emph{loose edge} has no endpoints.    	\hfill\includegraphics{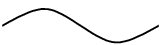}\hbox to 2.15in{}\medskip
\item An \emph{ordinary edge} is a link or a loop.  
\item The set of loose edges of $\Gamma$ is $E_0(\Gamma)$.  The set of ordinary edges of $\Gamma$ is $E_* := E_*(\Gamma)$.  
\item Edges are \emph{parallel} if they have the same endpoints.   
\medskip
\item An \emph{ordinary graph} is a graph in which every edge is a link or a loop. 
\item A \emph{link graph} is a graph whose edges are links.  
\item A \emph{simple graph} is a link graph with no parallel edges.
\medskip
\item 
$E(X,Y)$, where $X,Y \subseteq V$, is the set of edges with one endpoint in $X$ and the other in $Y$.  (Every such edge must be a link or, if $X \cap Y \neq \eset$, a loop.)
A \emph{cut} or \emph{cutset} is an edge set $E(X,X^c)$ that is nonempty.
\end{itemize}

\subsubsection*{Vertices and vertex sets in $\Gamma$}  Let $X \subseteq V$.
\begin{itemize}
\item An \emph{isolated vertex} is a vertex that has no incident edges; i.e., a vertex of degree $0$.
\item $X$ is \emph{stable} or \emph{independent} if, aside from loose edges, no edge has all endpoints in $X$.
\end{itemize}

\pagebreak[3]
\subsubsection*{Degrees and regularity}
\begin{itemize}
\item The \emph{degree} of a vertex $v$, $d(v):=d_\Gamma(v)$, is the number of edge ends of which $v$ is an endpoint.  A loop counts twice, once for each end. 
\item $\Gamma$ is \emph{regular} if every vertex has the same degree.  If that degree is $k$, it is \emph{$k$-regular}.
\end{itemize}

\subsubsection*{Walks, trails, paths, circles}
\begin{itemize}
\item A \emph{walk} is a sequence $v_0 e_1 v_1 \cdots e_{l} v_l$ where $V(e_i) = \{ v_{i-1}, v_i \}$ and $l \geq 0$.  Its \emph{length} is $l$.  A walk may be written $e_1 e_2 \cdots e_l$ or $v_0v_1\cdots v_l$.
\item A \emph{closed walk} is a walk where $l \geq 1$ and $v_0 = v_l$.
\item A \emph{trail} is a walk with no repeated edges.
\item A \emph{path} or \emph{open path} is a trail with no repeated vertex, or the graph of such a trail (technically, the latter is a \emph{path graph}), or the edge set of a path graph.
\item A \emph{closed path} is a closed trail with no repeated vertex other than that $v_0=v_l$.  (Thus, a closed path is not a path.)
\item A \emph{circle} (also called `cycle', `polygon', etc.) is the graph, or the edge set, of a closed path.  Equivalently, it is a connected, regular graph with degree 2, or its edge set.  
\item $\cC = \cC(\Gamma)$ is the class of all circles in $\Gamma$.
\end{itemize}

\subsubsection*{Examples}
\begin{itemize}
\item $K_n$ is the complete graph of order $n$.  $K_X$ is the complete graph with vertex set $X$.
\item $K_n^c$ is the edgeless graph of order $n$.
\item $\Gamma^c$ is the complement of $\Gamma$, if $\Gamma$ is simple.
\item $P_l$ is a path of length $l$ (as a graph or edge set).
\item $C_l$ is a circle of length $l$ (as a graph or edge set).
\item $K_{r,s}$ is the complete bipartite graph with $r$ left vertices and $s$ right vertices.  $K_{X,Y}$ is the complete bipartite graph with left vertex set $X$ and right vertex set $Y$.
\item The empty graph, $\eset := (\eset,\eset)$, has no vertices and no edges.  It is not connected.
\end{itemize}

\subsubsection*{Types of subgraph}  In $\Gamma$, let $X\subseteq V$ and $S \subseteq E$.
\begin{itemize}
\item A \emph{component} (or \emph{connected component}) of $\Gamma$ is a maximal connected subgraph, excluding loose edges.  An \emph{isolated vertex} is a component that has one vertex and no edges.  A loose edge is not a component.
\item $c(\Gamma)$ is the number of components of $\Gamma$.  $c(S)$ is short for $c(V,S)$.
\item A \emph{spanning subgraph} is $\Gamma' \subseteq \Gamma$ such that $V' = V$.
\item $\Gamma|S := (V,S)$.  This is a spanning subgraph.
\item  $S{:}X := \{e \in S : \eset \neq V(e) \subseteq X \} = (E{:}X) \cap S$.  We often write $S{:}X$ as short for the subgraph $(X, S{:}X)$.
\item The \emph{induced subgraph} $\Gamma{:}X$ is the subgraph $\Gamma{:}X := (X, E{:}X).$  An induced subgraph has no loose edges.  
We often write $E{:}X$ as short for $(X, E{:}X)$.  
\item $\Gamma \setm S := (V, E \setm S) = \Gamma|S^c$.
\item $\Gamma \setm X$ is the subgraph with 
$$
V(\Gamma \setm X) := X^c \text{ and } E(\Gamma \setm X) := \{e \in E \mid V(e) \subseteq V \setm X \}.
$$
We say $X$ is \emph{deleted} from $\Gamma$.  $\Gamma \setm X$ includes all loose edges, if there are any (unlike $\Gamma{:}X^c$, which has no loose edges).
\end{itemize}

\subsubsection*{Graph structures and types}
\begin{itemize}
\item \raisebox{-9pt}{\parbox{4in}{A \emph{theta graph} is the union of three internally disjoint paths that have the same endpoints.\medskip}
\hspace{.5in}\raisebox{-9pt}{\includegraphics[scale=.9]{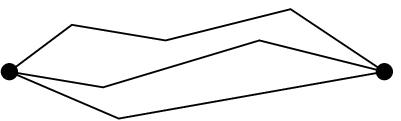}}}
\item A \emph{block} of $\Gamma$ is a maximal subgraph without loose edges, such that every pair of edges is in a circle together.  The simplest kinds of block are an isolated vertex, and a subgraph $(\{v\},\{e\})$ where $e$ is a loop or half edge at vertex $v$.  A loose edge is not in any block of $\Gamma$.
\item $\Gamma$ is \emph{inseparable} if it has only one block.
\item A \emph{cutpoint} is a vertex that belongs to more than one block.
\end{itemize}

\subsubsection*{Fundamental circles}\

\noindent\parbox{4in}{\quad
Let $T$ be a maximal forest in $\Gamma$.  If $e \in E_* \setm T$, there is a unique circle $C_e \subseteq T \cup \{e\}$.  The \emph{fundamental system of circles} for $\Gamma$, with respect to $T$, is the set of all circles $C_e$ for $e \in E_* \setm T$.  
}
\hfill\raisebox{-1cm}{\includegraphics[scale=.8]{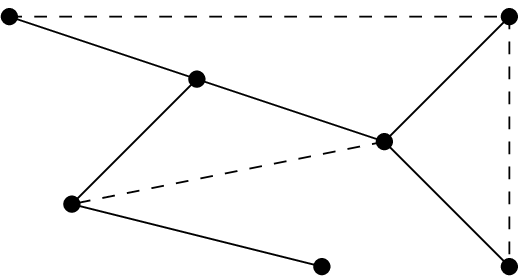}}
\medskip
\begin{prop}\label{P:fundsystem}
Choose a maximal forest $T$.  Every circle in $\Gamma$ is the set sum of fundamental circles with respect to $T$.
\end{prop}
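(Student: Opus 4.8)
The plan is to show the fundamental circles span the \emph{binary cycle space}, using the set sum $\oplus$ as addition (over $\bbF_2$). Fix a circle $C$ in $\Gamma$ and let its \emph{chords} be $C \setm T = \{e_1,\dots,e_k\}$, the edges of $C$ not lying in the maximal forest $T$. Since a circle consists only of ordinary edges, each $e_i \in E_* \setm T$, so each has a well-defined fundamental circle $C_{e_i}$ (as guaranteed in the preamble to the statement). I would then form
\[
D := C \oplus C_{e_1} \oplus \cdots \oplus C_{e_k}
\]
and aim to prove $D = \eset$, which at once yields $C = C_{e_1} \oplus \cdots \oplus C_{e_k}$, the desired expression of $C$ as a set sum of fundamental circles.

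First I would check that $D$ uses no edge outside $T$. By construction $C_{e_i} \subseteq T \cup \{e_i\}$, so the only non-forest edge appearing in $C_{e_i}$ is $e_i$ itself, and the $e_i$ are distinct. Hence the non-forest edges occurring among the summands are exactly $e_1,\dots,e_k$ coming from the $C_{e_i}$, together with the same chords $e_1,\dots,e_k$ coming from $C$. Each $e_i$ therefore lies in precisely two of the summands (namely $C$ and $C_{e_i}$) and so cancels in the set sum. Consequently $D \subseteq T$.

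Next I would observe that $D$ is an \emph{even} subgraph, i.e.\ every vertex has even degree in $D$. The reason is that a single circle has all degrees equal to $2$, and in a symmetric difference the degree of a vertex is congruent mod $2$ to the sum of its degrees in the summands; so each vertex of $D$ has even degree. I would then invoke the standard fact that a nonempty even subgraph must contain a circle (start a trail at any vertex of positive degree and continue until a vertex is repeated). But $D \subseteq T$ and a forest contains no circle, so $D$ contains no circle and must be empty, completing the argument.

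The only real obstacle is bookkeeping rather than genuine difficulty: one must confirm that the degree-parity computation and the chord cancellation remain valid in the presence of loops, parallel edges, and the oddball edges. A loop is a length-$1$ circle equal to its own fundamental circle, so it causes no trouble; half edges and loose edges lie in no circle and in no $C_e$, so they never enter $D$; and because edge sets are genuine sets, the symmetric difference handles parallel edges automatically. Granting these routine checks, the proof is complete.
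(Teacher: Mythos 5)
Your argument is correct and is exactly the standard justification for the identity $C = \bigoplus_{e \in C \setm T} C_{T}(e)$, which is all the paper states as its proof. You have simply filled in the routine verification (cancellation of chords, parity of degrees, no circle in a forest) that the paper leaves implicit.
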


\begin{proof}
$C =  \bigoplus_{e \in C \setm T} C_{T}(e)$.
\end{proof}

\section{Signed Graphs}\label{sg}

A \emph{signed graph} $\Sigma = (\Gamma,\sigma) = (V,E,\sigma)$ is a graph $\Gamma = (V,E)$ together with a function $\sigma$ that assigns a sign, $\sigma(e) \in \{+,-\}$, to each ordinary edge (link or loop) in $\Gamma$.  $\sigma$ is called the \emph{signature} (or \emph{sign function}).  A half or loose edge does not get a sign.  Thus, the signature is $\sigma: E_* \to \{+,-\}$.

Notation:
\begin{itemize}
\item $|\Sigma|$ is the \emph{underlying graph} $\Gamma$.
\item $E^+ := \sigma\inv(+) = \{ e \in E : \sigma(e)=+ \}.$  The \emph{positive subgraph} is $\Sigma^+ := (V,E^+).$
\item $E^- := \sigma\inv(+) = \{ e \in E : \sigma(e)=- \}.$  The \emph{negative subgraph} is $\Sigma^- := (V,E^-).$
\item $+\Gamma := (\Gamma,+)$ is an \emph{all-positive} signed graph (every ordinary edge is $+$).  $e \in E_*(\Gamma)$ becomes $+e \in +E = E(+\Gamma)$.
\item $-\Gamma := (\Gamma,-)$ is an \emph{all-negative} signed graph (every ordinary edge is $-$).  $e \in E_*(\Gamma)$ becomes $-e \in -E = E(-\Gamma)$.
\item $\pm\Gamma := (+\Gamma) \cup (-\Gamma)$.  $E(\pm\Gamma) = \pm E := (+E) \cup (-E).$  This is the \emph{signed expansion} of $\Gamma$.
\item $\Sigma\full := \Sigma$ with a half edge or negative loop attached to every vertex that does not have one.  $\Sigma\full$ is called a \emph{full} signed graph.
\item $\Sigma^\circ := \Sigma$ with a negative loop attached to every vertex that does not have one.
\item Equivalent notations for the sign group are $\{+,-\}$, $\{+1,-1\}$, etc.  We consider $+$ and $+1$ equivalent, also $-$ and $-1$; this is important when we add signs in matrix matters.  (Another notation is $\bbZ_2 := \{0,1\}$ modulo 2, but that is unsuitable here because we need addition to imply summation of $\pm1$'s.)
\end{itemize}

Signed graphs $\Sigma_1$ and $\Sigma_2$ are \emph{isomorphic}, written $\Sigma_1 \cong \Sigma_2$, if there is an isomorphism between their underlying graphs that preserves the signs of edges.


\subsection{Balance}\label{bal}\

Balance or imbalance is the fundamental property of a signed graph.

\subsubsection{Signs and balance}\

\begin{itemize}
\item The \emph{sign of a walk} $W$, $\sigma(W)$, is the product of the signs of its edges, including repeated edges.
\item The \emph{sign of an edge set} $S$, $\sigma(S)$, is the product of the signs of its edges, without repetition.
\item The sign of a circle $C$, $\sigma(C)$, is the same whether the circle is treated as a walk or as an edge set.  
\item The \emph{class of positive circles} is 
$$\cB = \cB(\Sigma) := \{ C \in \cC(|\Sigma|) : \sigma(C) = + \}.$$
\item $\Sigma$ is \emph{balanced} if it has no half edges and every circle in it is positive.  Similarly, any subgraph or edge set is balanced if it has no half edges and every circle in it is positive.
\item A circle is balanced if and only if it is positive.  However, in general, a walk cannot be balanced because it is not a graph or edge set.
\item A \emph{negative digon} is a circle of length 2 (i.e., a pair of parallel edges) that has one positive edge and one negative edge.
\item $b(\Sigma)$ is the number of components of $\Sigma$ (omitting loose edges) that are balanced.  $b(S)$ is short for $b(\Sigma|S)$.
\item $\pib(\Sigma) := \{ V(\Sigma') : \Sigma' \text{ is a balanced component of } \Sigma\}.$  Then $b(\Sigma) = |\pib(\Sigma)|.$  $\pib(S)$ is short for $\pib(\Sigma|S)$.  
\item $V_0(\Sigma)$ is the set of vertices of unbalanced components of $\Sigma$.  Formally, $V_0(\Sigma) := V \setm \bigcup_{W \in \pib(\Sigma)} W.$  $V_0(S)$ is short for $V_0(\Sigma|S)$.
\end{itemize}
\smallskip

\noindent\parbox{2.5in}{\quad In the example at the right, 
$\pib(\Sigma) = \{B_1, B_2\}$ and $V_0(\Sigma) = V \setm (B_1 \cup B_2)$.}
\hfill
\raisebox{-2cm}{\includegraphics[scale=.8]{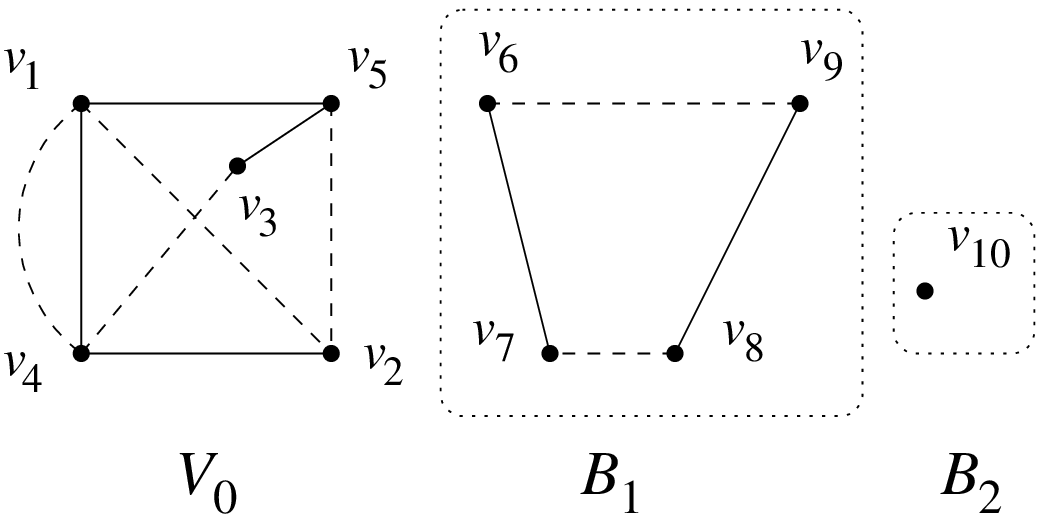}}	
\bigskip

\subsubsection{Criteria for balance}\

A \emph{bipartition} of a set $X$ is an unordered pair $\{ X_1, X_2 \}$ such that $X_1 \cup X_2 = X$ and $X_1 \cap X_2 = \emptyset$.  $X_1$ or $X_2$ could be empty.

\begin{thm}[{Harary's Balance Theorem (1953a)}] \label{T:balance}
The following statements about a signed graph are equivalent.
\begin{enumerate}[{\rm(i)}]
\item $\Sigma$ is balanced.
\label{T:bal}
\item $\Sigma$ has no half edges and there is a bipartition $V = V_1 \cupdot V_2$ such that $E^- = E(V_1,V_2)$.
\label{T:balbipartition}
\item $\Sigma$ has no half edges and any two paths with the same endpoints have the same sign.
\label{T:balpath}
\end{enumerate}
\end{thm}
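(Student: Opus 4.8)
The plan is to prove the three statements equivalent by the cyclic chain (i) $\Rightarrow$ (ii) $\Rightarrow$ (iii) $\Rightarrow$ (i). The clause ``$\Sigma$ has no half edges'' occurs in all three statements (it is built into the definition of balance), so I carry it along passively and concentrate on the circle/cut/path content. Since edges join vertices only within a single component, balance, the bipartition, and the path condition are all decided componentwise; I will therefore prove the implications for a connected $\Sigma$ and then assemble the components, isolated vertices and loose edges being irrelevant.

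The implication (i) $\Rightarrow$ (ii) is the constructive heart of the theorem, and I expect it to be the main obstacle. First I would choose a maximal forest $T$, root each tree at a vertex $r$, and define a two-coloring $\mu\colon V \to \{+,-\}$ by $\mu(v) := \sigma(T_{rv})$, the sign of the \emph{unique} tree path from $r$ to $v$; uniqueness of tree paths makes $\mu$ well defined without any appeal to (iii). Set $V_1 := \mu\inv(+)$ and $V_2 := \mu\inv(-)$. The claim to be verified is that $\sigma(e) = \mu(x)\mu(y)$ for every link $e$ with endpoints $x,y$. For a tree edge this is immediate from the definition of $\mu$. For a non-tree link I would use its fundamental circle $C_e = \{e\} \cup T_{xy}$: by balance $\sigma(C_e) = +$, hence $\sigma(e) = \sigma(T_{xy})$, and $\sigma(T_{xy}) = \mu(x)\mu(y)$ because the root-to-$x$ and root-to-$y$ tree paths agree up to their meeting point, the common part contributing a square $(\pm1)^2 = +$. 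This identity says exactly that $e \in E^-$ iff $x,y$ receive different colors, i.e.\ $E^- = E(V_1,V_2)$. Positive loops are consistent with the identity ($\mu(x)\mu(x) = +$), and balance rules out negative loops; finally I glue the independent per-component colorings into one global bipartition $V = V_1 \cupdot V_2$.

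For (ii) $\Rightarrow$ (iii) the argument is a parity count. Given the bipartition, a path $P$ from $u$ to $v$ has $\sigma(P) = (-1)^{|P \cap E^-|}$, and $|P \cap E^-|$ is the number of times $P$ crosses between $V_1$ and $V_2$; the parity of that number depends only on whether $u$ and $v$ lie in the same part, not on $P$. Hence all $u$--$v$ paths carry the same sign.

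For (iii) $\Rightarrow$ (i) I would take any circle $C$, pick two of its vertices, and split $C$ into two paths $P,Q$ with those common endpoints; by (iii) $\sigma(P) = \sigma(Q)$, so $\sigma(C) = \sigma(P)\sigma(Q) = +$ and $C$ is balanced. Besides the construction in (i) $\Rightarrow$ (ii), the one delicate point to watch is the degenerate circle of length one: a negative loop is a negative circle that the open-path wording of (iii) cannot detect, so (iii) must be read as forbidding it as well (equivalently, as requiring the empty path and every closed path at a vertex to have equal sign). With loops so interpreted, the chain closes and the three conditions are equivalent.
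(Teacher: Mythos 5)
Your proof is correct, but it takes a genuinely different route from the paper's. The paper derives the theorem from its switching machinery: Proposition \ref{P:switchingequiv} shows that switching preserves the signs of closed walks and that two signatures with the same positive circles differ by a switching, whence Corollary \ref{C:swbalance} identifies balance with being a switching $(+|\Sigma|)^{V_1}$ of the all-positive graph; the switching set $V_1$ \emph{is} the Harary bipartition, and the path condition follows because $PQ\inv$ is a closed walk whose sign switching cannot change. You instead run a self-contained cyclic chain (i)$\Rightarrow$(ii)$\Rightarrow$(iii)$\Rightarrow$(i), building the bipartition directly from the tree-path potential $\mu(v)=\sigma(T_{rv})$ and verifying $\sigma(e)=\mu(x)\mu(y)$ via fundamental circles. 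The kernel is the same---your $\mu$ is exactly the switching function $\zeta$ constructed in the paper's proof of Proposition \ref{P:switchingequiv}(b)---but your version is independent of the switching formalism, at the cost of redoing the spanning-tree argument that the paper has already packaged there. Each approach buys something: the paper's is shorter given the infrastructure and makes the bipartition conceptually transparent (it is a switching set); yours is elementary, closer to Harary's original structure theorem, and your (iii)$\Rightarrow$(i) step (splitting a circle at two of its vertices into two open paths) is more direct than the paper's, which at that point really only re-establishes the equivalence of (i) with switchability to all-positive. Your remark about negative loops is well taken: a circle of length $1$ cannot be split into two open paths, so condition (iii) as literally worded is blind to a negative loop, and some convention such as the one you propose (or simply restricting to link graphs) is needed to close the chain; the paper's proof quietly has the same blind spot.
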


We give a short proof (not the original one) after Corollary \ref{C:swbalance}.
\medskip

\noindent\parbox{3in}{\quad I like to call $\{V_1, V_2\}$ as in the theorem a \emph{Harary bipartition} of $\Sigma$.  An example appears at the right.  The circled and boxed vertices form the two sets of the bipartition.}
\hfill
\raisebox{-1.5cm}{\includegraphics[scale=.9]{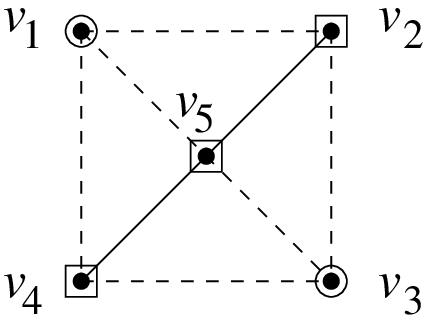}}
\ \ 
\parbox{7em}{$V_1 = \{v_1,v_3\}, \\ 
V_2 = \{v_2,v_4,v_5\}$}

\begin{cor}\label{C:balbipartite}
$-\Gamma$ is balanced if and only if $\Gamma$ is bipartite.
\end{cor}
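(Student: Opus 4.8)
The plan is to deduce this directly from Harary's Balance Theorem (Theorem~\ref{T:balance}), using the equivalence of balance with the existence of a Harary bipartition. The conceptual heart of the matter is the sign of a circle in an all-negative signed graph: if $C$ is a circle of length $\ell$ in $-\Gamma$, then every edge of $C$ is negative, so $\sigma(C) = \prod_{e \in C}(-1) = (-1)^{\ell}$. Thus a circle of $-\Gamma$ is positive precisely when its length is even, and $-\Gamma$ is balanced precisely when $\Gamma$ has no odd circles, which is the classical characterization of bipartiteness. One could build the proof on this observation together with the standard fact that a graph is bipartite if and only if it has no odd circle, but it is cleaner to pass through criterion~\ref{T:balbipartition} of Theorem~\ref{T:balance}, which delivers the bipartition for free.

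Concretely, first I would observe that in $-\Gamma$ every ordinary edge is negative, so $E^- = E_*$. Then criterion~\ref{T:balbipartition} says that $-\Gamma$ is balanced if and only if it has no half edges and there is a bipartition $V = V_1 \cupdot V_2$ with $E^- = E(V_1,V_2)$; substituting $E^- = E_*$, this becomes the condition $E_* = E(V_1,V_2)$, i.e., every ordinary edge has one endpoint in $V_1$ and the other in $V_2$. But that is exactly what it means for $\{V_1,V_2\}$ to exhibit $\Gamma$ as bipartite. Reading the equivalence in both directions then gives the corollary, with no further computation needed.

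The one place to be careful, and the main (minor) obstacle, is the treatment of the degenerate edges, so that the two sides of the equivalence fail together in the same cases. A half edge cannot lie in any $E(V_1,V_2)$, since it has only one endpoint, and it also blocks balance outright; so the presence of a half edge makes both $-\Gamma$ unbalanced and $\Gamma$ non-bipartite, consistently. Likewise a loop, which in $-\Gamma$ is a negative circle of length~$1$, destroys balance, and since its two endpoints coincide it cannot be split across a bipartition with $V_1 \cap V_2 = \eset$, so it also destroys bipartiteness. Once these cases are seen to match, the Harary bipartition of $-\Gamma$ and the bipartition of $\Gamma$ are literally the same object.
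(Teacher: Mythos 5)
Your proof is correct and is exactly the intended derivation: the paper states this as an immediate corollary of Theorem~\ref{T:balance}\eqref{T:balbipartition} and writes out no proof of its own, and your observation that $E^- = E_*$ in $-\Gamma$, so that a Harary bipartition of $-\Gamma$ is literally a bipartition witnessing bipartiteness of $\Gamma$, is precisely the one-line argument being left to the reader. Your care with half edges and loops is a reasonable bonus, though the corollary is implicitly meant for ordinary graphs, where those cases do not arise.
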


Thus, balance is a generalization of bipartiteness.

Balance is determined by blocks.

\begin{prop} \label{P:balblocks}
$\Sigma$ is balanced if and only if every block is balanced.
\end{prop}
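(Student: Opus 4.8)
The plan is to prove the two implications separately, with the forward direction being essentially immediate and the reverse direction resting on the structural fact that every circle of $\Sigma$ lies inside a single block.

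For the forward implication, suppose $\Sigma$ is balanced. Then $\Sigma$ has no half edges, so a fortiori no block contains one. Moreover, any circle contained in a block is a circle in $\Sigma$, hence positive. Thus each block has no half edges and only positive circles, so each block is balanced. No obstacle arises here since balance plainly passes to subgraphs.

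For the reverse implication, suppose every block of $\Sigma$ is balanced. First I would rule out half edges: a half edge $e$ at a vertex $v$ cannot lie in a circle with any other edge (it has only one endpoint), so by maximality it constitutes the block $(\{v\},\{e\})$ listed among the simplest blocks. Such a block is unbalanced, contradicting the hypothesis; hence $\Sigma$ has no half edges. It then remains to show every circle of $\Sigma$ is positive. Here I would invoke the key lemma that every circle $C$ is contained in a single block. Indeed, the edges of $C$ are pairwise in a common circle, namely $C$ itself, so by the definition of block they all belong to one block $B$, whence $C \subseteq B$. Since $B$ is balanced, $C$ is positive. As every circle is positive and there are no half edges, $\Sigma$ is balanced.

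The main obstacle is justifying that every circle lies in one block, i.e.\ that the relation ``two edges lie in a common circle'' is precisely what organizes the block structure. This is the classical fact that this relation, together with equality, is an equivalence relation on the ordinary edges whose classes (with their incident vertices) are exactly the blocks; the nontrivial ingredient is transitivity, proved by splicing two circles that share an edge into a circle through a third edge. In this expository setting the block decomposition is standard, so I expect to cite it rather than reprove it, and the proof of the proposition reduces to the short observations above.
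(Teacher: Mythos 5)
Your proof is correct. The paper states Proposition \ref{P:balblocks} without any proof at all, and your argument---balance passing to subgraphs for the forward direction, and for the converse the observations that a half edge forms its own (unbalanced) block and that every circle lies in a single block because its edges pairwise share the circle itself---is exactly the standard argument the author is implicitly relying on.
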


Deciding whether a signed graph is balanced or not is easy; see the algorithm in Section \ref{balalg}.

\subsubsection{Balancing vertices and edges}\

\begin{itemize}
\item A \emph{balancing vertex} is a vertex $v$ such that $\Sigma \setm v$ is balanced although $\Sigma$ is unbalanced.
\item A \emph{partial balancing edge} is an edge $e$ such that $\Sigma \setm e$ has more balanced components than does $\Sigma$.
\item A \emph{total balancing edge} is an edge $e$ such that $\Sigma \setm e$ is balanced although $\Sigma$ is not balanced.  A total balancing edge is a partial balancing edge, but a partial balancing edge may not be a total balancing edge.
\end{itemize}

Partial balancing edges are closely related to the geometry of signed graphs, so we record a classification.

\begin{prop}\label{P:baledge}
An edge $e$ is a partial balancing edge of\/ $\Sigma$ if and only if it is either 
\begin{enumerate}[{\rm(a)}]
\item an isthmus between two components of\/ $\Sigma \setm e$, of which at least one is balanced, or
\label{P:balisthmus}
\item a negative loop or half edge in a component\/ $\Sigma'$ such that\/ $\Sigma' \setm e$ is balanced, or
\label{P:balloop}
\item a link with endpoints $v,w$, which is not an isthmus, in a component\/ $\Sigma'$ such that $\Sigma' \setm e$ is balanced and every $vw$-path in $\Sigma' \setm e$ has sign $-\sigma(e)$.
\label{P:ballink}
\end{enumerate}
\end{prop}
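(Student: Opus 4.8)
The plan is to localize the problem to the single component of $\Sigma$ that meets $e$ and then dispatch the edge types one at a time, with Harary's Balance Theorem supplying the one nonroutine equivalence. First I would observe that deleting $e$ alters only the component $\Sigma_0$ of $\Sigma$ containing $e$; every other component, together with its balance status, is untouched. Hence $b(\Sigma\setm e)-b(\Sigma)=b(\Sigma_0\setm e)-b(\Sigma_0)$, and since $\Sigma_0$ is a single component we have $b(\Sigma_0)\in\{0,1\}$. Thus $e$ is a partial balancing edge exactly when deleting it strictly increases this local count. A loose edge or a positive loop changes neither the components nor the set of circles of $\Sigma_0$, so it cannot raise the count; this already rules out the edge types absent from (a)--(c).

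Next I would handle negative loops and half edges. If $e$ is a negative loop (a negative circle) or a half edge, then $\Sigma_0$ is automatically unbalanced, so $b(\Sigma_0)=0$; and deleting $e$ leaves $\Sigma_0\setm e$ connected, so $b(\Sigma_0\setm e)\in\{0,1\}$, equal to $1$ precisely when $\Sigma_0\setm e$ is balanced. That is exactly condition (b). For the isthmus case, deleting $e$ splits $\Sigma_0$ into two components $\Sigma_1'$ and $\Sigma_2'$; because no circle can use an isthmus, every circle of $\Sigma_0$ lies wholly within one $\Sigma_i'$, so $\Sigma_0$ is balanced iff both $\Sigma_i'$ are balanced. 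Then $b(\Sigma_0)=1$ if both are balanced and $0$ otherwise, while $b(\Sigma_0\setm e)$ simply counts how many of the two are balanced. Comparing, the count strictly increases iff at least one $\Sigma_i'$ is balanced, which is condition (a).

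The last and most delicate case is a link $e=vw$ that is not an isthmus, and this is where Theorem~\ref{T:balance} does the real work. Here $\Sigma_0\setm e$ stays connected, so $e$ is a partial balancing edge iff $\Sigma_0\setm e$ is balanced while $\Sigma_0$ itself is not. Every circle of $\Sigma_0$ through $e$ has the form $e\cup P$ for a $vw$-path $P$ in $\Sigma_0\setm e$, with sign $\sigma(e)\sigma(P)$, while every circle avoiding $e$ lives in $\Sigma_0\setm e$. Assuming $\Sigma_0\setm e$ balanced, Theorem~\ref{T:balance}(iii) forces all $vw$-paths in $\Sigma_0\setm e$ to share a single sign $s$; then the circles through $e$ are uniformly positive when $s=\sigma(e)$ and uniformly negative when $s=-\sigma(e)$. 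Hence $\Sigma_0$ is unbalanced precisely when $s=-\sigma(e)$, i.e.\ when every $vw$-path in $\Sigma_0\setm e$ has sign $-\sigma(e)$, which is condition (c); the converse reads off the same computation.

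I expect this final case to be the main obstacle. The subtlety is that balance of $\Sigma_0\setm e$ alone does not make $e$ a partial balancing edge---one must also certify that restoring $e$ \emph{destroys} balance---and the clean way to capture that is the path-sign criterion of Harary's theorem, which converts the existential statement ``some circle through $e$ is negative'' into the uniform condition on $vw$-path signs recorded in (c). Everything else is bookkeeping, but this conversion is exactly what pins down the precise hypothesis in (c) rather than the naive ``$\Sigma_0\setm e$ is balanced.''
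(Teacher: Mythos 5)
Your proposal is correct and follows essentially the same route as the paper: reduce to the component containing $e$, dispose of each edge type by direct counting of balanced components, and in the non-isthmus link case invoke Theorem~\ref{T:balance}(iii) to convert ``some negative circle through $e$'' into the uniform path-sign condition of (c). You simply spell out the bookkeeping that the paper's proof dismisses as ``easy to see.''
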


\begin{proof}
We may assume $\Sigma$ is connected.

\eqref{P:balisthmus}  It is easy to see when an isthmus is a partial balancing edge.

\eqref{P:balloop}  It is also easy to see when a negative loop or a half edge is a partial balancing edge.

\eqref{P:ballink}  Suppose $e$ is a link $vw$, not an isthmus.  For $e$ to be a partial balancing edge it is necessary that $\Sigma \setm e$ be balanced.  Supposing that is true, $e$ is a partial balancing edge if and only if $\Sigma$ is unbalanced, which is true if and only if $\sigma(e)$ differs from the sign of some $vw$-path $P$.  By Theorem \ref{T:balance}\eqref{T:balpath}, the choice of $P$ does not matter.
\end{proof}

In the next diagram, `b' denotes a partial balancing edge, of which there are several.
\bigskip
\begin{center}
\includegraphics[scale=.6]{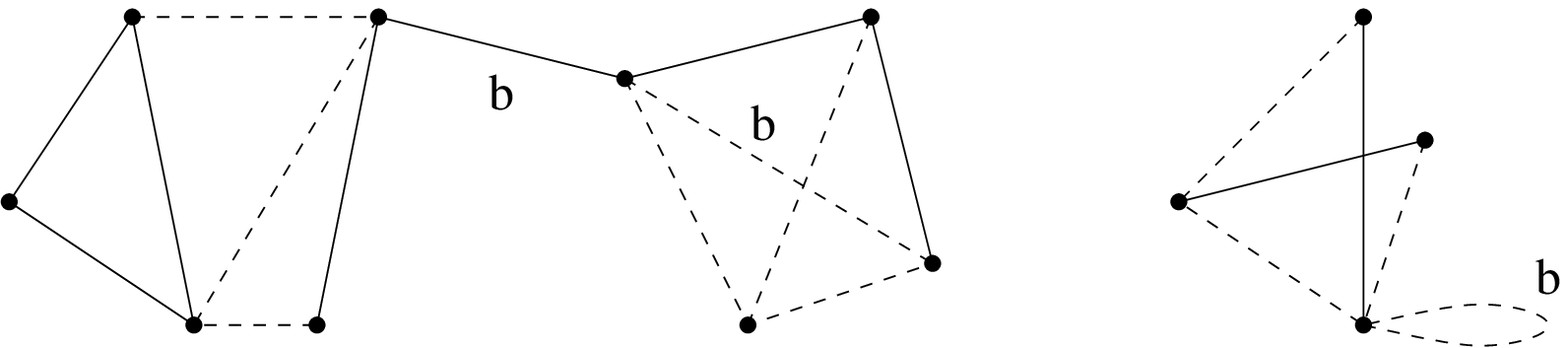}	
\end{center}

Determining whether $\Sigma$ has a partial balancing edge, and finding them all, is easy.  One tests each component $\Sigma'$ for balance.  If it is unbalanced, test each edge $e$ to see whether every component of $\Sigma' \setm e$ is balanced.  That is not the most efficient method; the algorithm in Section \ref{balalg} for determining balance is adaptable to testing many edges at once.  I do not recall any explicit published algorithms for this problem.

\subsubsection{Balancing edge sets}\ 

\begin{itemize}
\item \emph{Partial balancing set}: $S$ such that $b(\Sigma \setm S) > b(\Sigma)$.
\item \emph{Total balancing set}: $S$ such that $\Sigma \setm S$ is balanced but $\Sigma$ is not balanced.
\end{itemize}
\medskip

Determining the minimum size of a partial or total balancing set is an NP-hard problem.  It includes the known NP-hard problem of determining the maximum cut size in a graph, because a minimum partial balancing set in $-\Gamma$ is the complement of a maximum cutset in $\Gamma$.

\subsection{Switching}\label{sw}\

A \emph{switching function} for $\Sigma$ is a function $\zeta: V \to \{ +, - \}$.  The \emph{switched signature} is $\sigma^\zeta(e) := \zeta(v) \sigma(e) \zeta(w)$, where $e$ has endpoints $v,w$.  The \emph{switched signed graph} is $\Sigma^\zeta := (|\Sigma|, \sigma^\zeta)$.  We say $\Sigma$ is \emph{switched by $\zeta$}.  Note that $\Sigma^\zeta = \Sigma^{-\zeta}$.

If $X \subseteq V$, \emph{switching $\Sigma$ by $X$} (or simply \emph{switching $X$}) means reversing the sign of every edge in the cutset $E(X,X^c)$.  The switched graph is $\Sigma^X$.  This is the same as $\Sigma^\zeta$ where $\zeta(v) := -$ if and only if $v \in X$.  Switching by $\zeta$ or $X$ is the same operation with different notation.  Note that $\Sigma^X = \Sigma^{X^c}$.

Switching a one-vertex set $\{v_1\}$: \hfill\raisebox{-1.5cm}{\includegraphics[scale=.8]{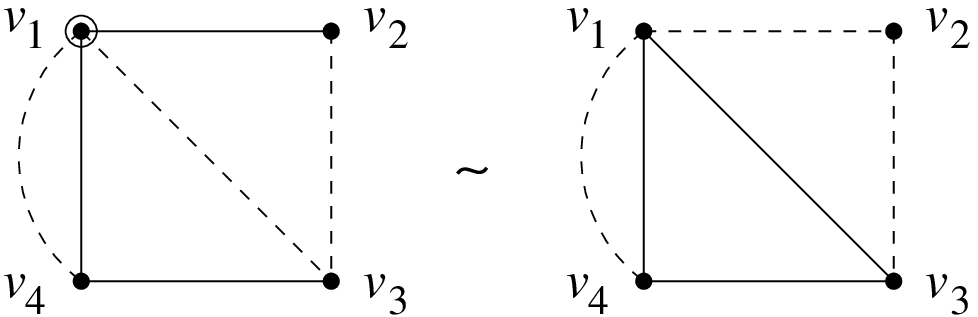}}

\pagebreak[3]

\begin{prop}\label{P:switchingequiv}\ 
\begin{enumerate}[{\rm(a)}]
\item Switching leaves the signs of all closed walks, including all circles, unchanged.  Thus, $\cB(\Sigma^{\zeta}) = \cB(\Sigma)$.
\label{P:swwalk}
\item If $|\Sigma_1| = |\Sigma_2|$ and $\cB(\Sigma_1) = \cB(\Sigma_2)$, then there exists a switching function $\zeta$ such that $\Sigma_2 = \Sigma_1 ^{\zeta}$.
\label{P:swB}
\end{enumerate}
\end{prop}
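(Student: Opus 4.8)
The plan is to handle the two parts separately: part (a) is a direct sign computation, and part (b) is a reduction to Harary's Balance Theorem via an auxiliary signature.

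For part (a), I would take an arbitrary closed walk $W = v_0 e_1 v_1 \cdots e_l v_l$ with $v_0 = v_l$ and expand its switched sign. Since $\sigma^\zeta(e_i) = \zeta(v_{i-1})\sigma(e_i)\zeta(v_i)$, the product $\sigma^\zeta(W) = \prod_{i=1}^l \sigma^\zeta(e_i)$ factors as $\sigma(W)$ times the $\zeta$-factor $\prod_{i=1}^l \zeta(v_{i-1})\zeta(v_i)$. The key observation is that in this $\zeta$-factor every vertex value occurs an even number of times: each intermediate vertex $v_j$ (for $1 \le j \le l-1$) appears once as the right endpoint of $e_j$ and once as the left endpoint of $e_{j+1}$, while the left endpoint of $e_1$ and the right endpoint of $e_l$ together contribute $\zeta(v_0)\zeta(v_l) = \zeta(v_0)^2$ because $v_0 = v_l$. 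As $\zeta(v)^2 = +$ for every $v$, the whole $\zeta$-factor collapses to $+$, so $\sigma^\zeta(W) = \sigma(W)$. Circles are closed walks, so their signs are preserved, and hence $\cB(\Sigma^\zeta) = \cB(\Sigma)$.

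For part (b), the idea is to construct the ``quotient'' signature that a switching function ought to realize. Writing $\sigma_1, \sigma_2$ for the two signatures on the common ordinary edge set $E_*$, I would define $\tau(e) := \sigma_1(e)\sigma_2(e)$. First I would upgrade the hypothesis: because signs lie in $\{+,-\}$, agreement of $\Sigma_1$ and $\Sigma_2$ on \emph{which} circles are positive is the same as agreement on \emph{all} circle signs, so $\sigma_1(C) = \sigma_2(C)$ for every circle $C$. Consequently $\tau(C) = \sigma_1(C)\sigma_2(C) = \sigma_1(C)^2 = +$ for every $C$, so $(|\Sigma_1|, \tau)$ has no negative circle and is therefore balanced (half edges carry no sign and play no role here). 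Now I would invoke Theorem \ref{T:balance}\eqref{T:balbipartition} to obtain a Harary bipartition $V = V_1 \cupdot V_2$ with $\tau(e) = -$ exactly when $e \in E(V_1,V_2)$. Setting $\zeta(v) := -$ for $v \in V_1$ and $\zeta(v) := +$ for $v \in V_2$, one checks directly that for an ordinary edge $e$ with endpoints $v,w$ we have $\zeta(v)\zeta(w) = -$ iff $e$ crosses the bipartition iff $\tau(e) = -$; loops, having $v = w$, give $\zeta(v)^2 = +$, matching the fact that a balanced graph has no negative loop. Thus $\tau(e) = \zeta(v)\zeta(w)$ for all $e \in E_*$, and multiplying through yields $\sigma_2(e) = \sigma_1(e)\tau(e) = \zeta(v)\sigma_1(e)\zeta(w) = \sigma_1^\zeta(e)$; since half edges match automatically, $\Sigma_2 = \Sigma_1^\zeta$.

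I expect the only genuine obstacle to lie in part (b), namely in making the reduction to Harary's theorem airtight: one must be sure that equality of the positive-circle classes really promotes to equality of all circle signs, and that the loop and half-edge cases do not slip past the bipartition argument. The computation in part (a) is routine once the telescoping of the $\zeta$-factor is noticed.
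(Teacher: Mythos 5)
Your part (a) is correct and is essentially the paper's own argument: expand $\sigma^\zeta(W)$ edge by edge and let the $\zeta$-values telescope, using $v_0=v_l$ to kill the two boundary factors. Nothing to add there.

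Part (b) is where you diverge, and there is a problem you could not have seen without the paper in hand: your reduction is circular within this paper's logical development. You prove (b) by forming the quotient signature $\tau:=\sigma_1\sigma_2$, observing that all circles are $\tau$-positive, and then invoking Theorem \ref{T:balance}\eqref{T:balbipartition} to write $\tau$ as a coboundary $\zeta(v)\zeta(w)$. But the only proof of Theorem \ref{T:balance} given in the paper is the ``short proof'' following Corollary \ref{C:swbalance}, and that proof deduces the bipartition from the statement that a balanced graph is a switching of $+|\Sigma|$ --- which is precisely Proposition \ref{P:switchingequiv}\eqref{P:swB} applied to the pair $(+|\Sigma|,\Sigma)$. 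So in this paper the implication you need from Harary's theorem is a consequence of the very proposition you are proving. Your reduction of the general case to the case ``one signature is all positive'' is perfectly sound, but that base case still needs an independent argument. The paper supplies one directly: assume connectivity, fix a spanning tree $T$ and root $v_0$, and set $\zeta(v):=\sigma_1(T_{v_0v})\sigma_2(T_{v_0v})$; tree edges then switch correctly by telescoping along $T$, and non-tree edges switch correctly because the two signatures agree on the sign of each fundamental circle. If you replace your appeal to Theorem \ref{T:balance} by this spanning-tree construction applied to $\tau$ (i.e., $\zeta(v):=\tau(T_{v_0v})$), your argument becomes self-contained and is then essentially the paper's proof in lightly different clothing. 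Two smaller points: your promotion of $\cB(\Sigma_1)=\cB(\Sigma_2)$ to $\sigma_1(C)=\sigma_2(C)$ for all circles is fine, but note that ``balanced'' as defined here forbids half edges, so to quote Theorem \ref{T:balance} verbatim you would have to apply it to the ordinary-edge part $(V,E_*,\tau)$ rather than to $(|\Sigma_1|,\tau)$ itself; and your loop observation (a $\tau$-negative loop would be a negative circle) is a point the paper's tree argument also handles silently, so it is good that you made it explicit.
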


\begin{proof}[Proof of \eqref{P:swwalk} by formula]
Let $\zeta$ be a switching function and let $W = v_0 e_0 v_1 e_1 v_ 2 \cdots v_{n-1} e_{n-1} v_0$ be a closed walk.  Then 
\begin{align*}
\sigma^{\zeta}(W) &=  \big[ \zeta(v_0) \sigma(e_0) \zeta(v_1) \big] \big[ \zeta(v_1) \sigma(e_1) \zeta(v_2) \big] \dots \big[ \zeta (v_{n-1}) \sigma(e_{n-1}) \zeta (v_0) \big] \\
&= \sigma(e_0)\sigma(e_1) \cdots \sigma(e_{n-1}) = \sigma(W).
\qedhere
\end{align*}
\end{proof}

\begin{proof}[Proof of \eqref{P:swB} by defining a switching function]
We may assume $\Sigma_1$ is connected.  Pick a spanning tree $T$ and a vertex $v_0$.  Define 
$$
\zeta(v) := \sigma_1(T_{v_0v})\sigma_2(T_{v_0v})
$$
where $T_{v_0v}$ is the path in $T$ from $v_0$ to $v$.  
Now it is easy to calculate that $\Sigma_1^\zeta = \Sigma_2$.
\end{proof}

Signed graphs $\Sigma_1$ and $\Sigma_2$ are \emph{switching equivalent}, written $\Sigma_1 \sim \Sigma_2$, if they have the same underlying graph and there exists a switching function $\zeta$ such that $\Sigma_1^\zeta \cong \Sigma_2$.  The equivalence class of $\Sigma$, 
$$
[\Sigma] := \{ \Sigma' : \Sigma' \sim \Sigma \},
$$ 
is called its \emph{switching class}.

Similarly, $\Sigma_1$ and $\Sigma_2$ are \emph{switching isomorphic}, written $\Sigma_1 \simeq \Sigma_2$, if $\Sigma_1$ is isomorphic to a switching of $\Sigma_2$.  The equivalence class of $\Sigma$ is called its \emph{switching isomorphism class}.

In the next figure not all signed graphs are switching equivalent: $\Sigma_2 \sim \Sigma_3$ but $\Sigma_1 \not\sim \Sigma_2 , \Sigma_3$. 
However, all are switching isomorphic: $\Sigma_1 \simeq \Sigma_2 \simeq \Sigma_3$.
\medskip
\begin{center}
\includegraphics[scale=.8]{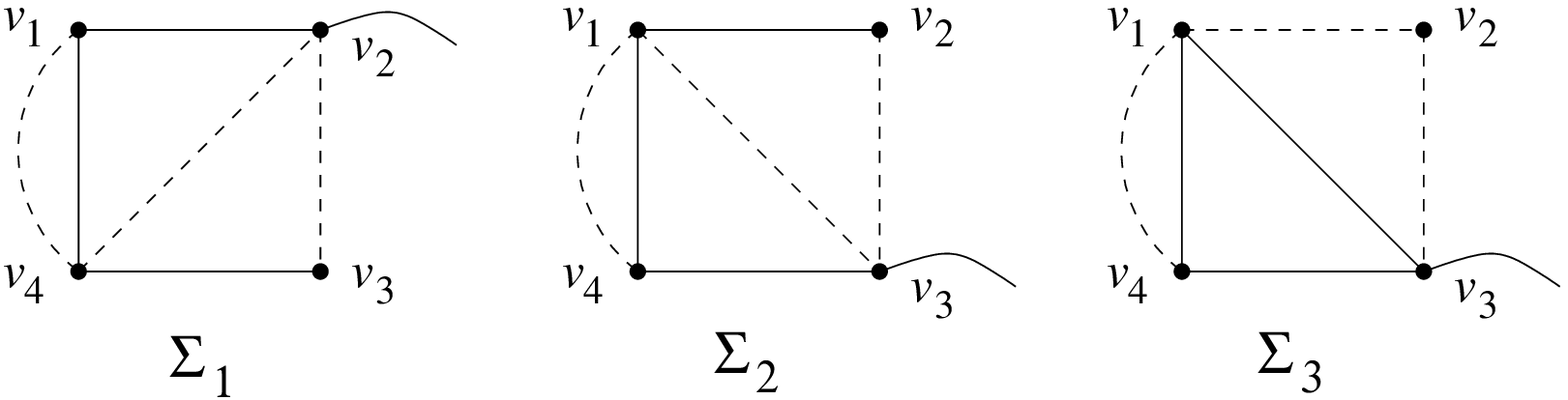}	
\end{center}

\begin{prop}\label{P:swequivalence}
Switching equivalence, $\sim$, is an equivalence relation on signatures of a fixed underlying graph.  

Switching isomorphism, $\simeq$, is an equivalence relation on signed graphs.
\end{prop}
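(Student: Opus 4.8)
The plan is to treat switching as a group action. First I would record the composition law for switching functions: for any two switching functions $\zeta,\zeta'$ one has $(\sigma^{\zeta})^{\zeta'}=\sigma^{\zeta\zeta'}$, where $\zeta\zeta'$ is the pointwise product $v\mapsto\zeta(v)\zeta'(v)$. This is a one-line substitution of exactly the kind carried out in the proof of Proposition~\ref{P:switchingequiv}\eqref{P:swwalk}: at an edge $e=vw$ the two vertex factors $\zeta'(v)\zeta(v)$ and $\zeta(w)\zeta'(w)$ reassemble into $(\zeta\zeta')(v)$ and $(\zeta\zeta')(w)$. Together with the facts that the constant function $\mathbf 1\equiv+$ satisfies $\sigma^{\mathbf 1}=\sigma$ and that every $\zeta$ is its own inverse (since $\zeta(v)\zeta(v)=+$ for all $v$), this exhibits $\{+,-\}^V$ as an abelian group acting on the set of signatures of a fixed underlying graph $\Gamma$.

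Given this, the first assertion is immediate: $\sim$ is exactly the orbit relation of this action, and the orbit relation of any group action is reflexive (via $\mathbf 1$), symmetric (via $\zeta\inv=\zeta$), and transitive (via $\zeta\zeta'$). Concretely, $\sigma\sim\sigma$ through $\mathbf 1$; if $\sigma_2=\sigma_1^{\zeta}$ then $\sigma_1=\sigma_2^{\zeta}$; and if $\sigma_2=\sigma_1^{\zeta}$ and $\sigma_3=\sigma_2^{\zeta'}$ then $\sigma_3=\sigma_1^{\zeta\zeta'}$. (If one reads the $\cong$ in the definition of $\sim$ as also permitting a relabeling by an automorphism of $\Gamma$, the acting group is enlarged to the semidirect product $\{+,-\}^V\rtimes\operatorname{Aut}(\Gamma)$, which is still a group, so the conclusion is unchanged.)

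For $\simeq$ the signed graphs are no longer forced to share an underlying graph, so I would add one lemma: switching commutes with isomorphism. Precisely, if $\varphi\colon|\Sigma|\to|\Sigma'|$ is a graph isomorphism and $\zeta$ is a switching function on $|\Sigma|$, then $\varphi(\Sigma^{\zeta})=(\varphi\Sigma)^{\zeta\circ\varphi\inv}$, again a direct check at each edge. Reflexivity of $\simeq$ is then trivial. For transitivity, suppose $\Sigma_1$ is isomorphic to $\Sigma_2^{\zeta}$ and $\Sigma_2$ is isomorphic to $\Sigma_3^{\zeta'}$ via $\psi\colon|\Sigma_2|\to|\Sigma_3|$; pushing $\zeta$ across $\psi$ with the lemma gives $\Sigma_2^{\zeta}\cong\Sigma_3^{\,\zeta'\cdot(\zeta\circ\psi\inv)}$, whence $\Sigma_1\simeq\Sigma_3$. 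For symmetry, if $\varphi\colon\Sigma_2^{\zeta}\to\Sigma_1$ is a sign-preserving isomorphism, then applying $\varphi$ to $\Sigma_2=(\Sigma_2^{\zeta})^{\zeta}$ and using the lemma yields $\Sigma_2\cong\Sigma_1^{\zeta\circ\varphi\inv}$, so $\Sigma_2\simeq\Sigma_1$.

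I expect the only real obstacle to be bookkeeping in the commutation lemma: one must transport the switching function along the isomorphism (hence the $\zeta\circ\varphi\inv$) and keep the direction of each isomorphism straight when combining it with switching in the proofs of symmetry and transitivity. Everything else is formal once the composition law $(\sigma^{\zeta})^{\zeta'}=\sigma^{\zeta\zeta'}$ and the self-inverse property $\zeta\inv=\zeta$ are in hand.
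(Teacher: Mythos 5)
Your proof is correct, and it fills in precisely the routine verification that the paper dismisses with the one-word proof ``Obvious!'': the composition law $(\sigma^{\zeta})^{\zeta'}=\sigma^{\zeta\zeta'}$, the self-inverse property of switching functions, and the commutation of switching with isomorphism are exactly the facts the author is taking for granted. Your group-action framing (and the careful handling of $\zeta\circ\varphi\inv$ when transporting a switching function across an isomorphism for the $\simeq$ case) is the standard way to make the claim rigorous, so there is nothing to correct.
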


\begin{proof}
Obvious!
\end{proof}

\subsubsection{Switching and balance}\

\begin{cor}\label{C:swbalance}
$\Sigma$ is balanced if and only if it has no half edges and it is switching equivalent to $+|\Sigma|$.
\end{cor}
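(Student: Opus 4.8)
The plan is to read the corollary off Proposition \ref{P:switchingequiv} together with the bare definition of balance, and in particular to avoid invoking Harary's Balance Theorem \ref{T:balance}, whose proof is to be supplied only afterward (so using it here would be circular). The guiding observation is that the all-positive graph $+|\Sigma|$ is balanced whenever $|\Sigma|$ has no half edges, and that balance is an invariant of both switching and isomorphism. Once these two facts are in hand, each direction of the equivalence is a one-line chain.

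For the reverse implication I would assume $\Sigma$ has no half edges and $\Sigma \sim +|\Sigma|$, so that $\Sigma^\zeta \cong +|\Sigma|$ for some switching function $\zeta$. Since $|\Sigma|$ has no half edges, $+|\Sigma|$ has every ordinary edge positive and no half edges, hence every circle is positive and $+|\Sigma|$ is balanced by definition. Now switching changes no circle sign and creates no half edge, so by Proposition \ref{P:switchingequiv}\eqref{P:swwalk}, $\cB(\Sigma) = \cB(\Sigma^\zeta)$ and $\Sigma$ is balanced precisely when $\Sigma^\zeta$ is; and the isomorphism $\Sigma^\zeta \cong +|\Sigma|$ transports balance across. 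Chaining these, $\Sigma$ inherits the balance of $+|\Sigma|$.

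For the forward implication I would start from $\Sigma$ balanced, which by definition already supplies the half-edge clause and tells us that every circle is positive, i.e.\ $\cB(\Sigma) = \cC(|\Sigma|)$. The graph $+|\Sigma|$ has the same underlying graph $|\Sigma|$ and likewise satisfies $\cB(+|\Sigma|) = \cC(|\Sigma|)$, so $\cB(\Sigma) = \cB(+|\Sigma|)$. Proposition \ref{P:switchingequiv}\eqref{P:swB} then yields a switching function $\zeta$ with $+|\Sigma| = \Sigma^\zeta$, which is exactly the assertion $\Sigma \sim +|\Sigma|$.

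There is essentially no hard step; the only care needed is in two conventions. First, the half-edge hypothesis must be tracked explicitly, because ``balanced'' is defined to forbid half edges while $+|\Sigma|$ retains any half edges that $\Sigma$ has (switching never removes them), so $+|\Sigma|$ is balanced only under the stated no-half-edge assumption. Second, one should reconcile the definition of $\sim$, phrased through isomorphism $\cong$, with the equality output $\Sigma^\zeta = +|\Sigma|$ of Proposition \ref{P:switchingequiv}\eqref{P:swB}; here the identity automorphism suffices, and conversely any isomorphism onto an all-positive graph forces $\Sigma^\zeta$ itself to be all-positive. With these conventions pinned down I expect the whole proof to run to only a few lines.
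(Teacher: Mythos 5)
Your proof is correct and follows exactly the route the paper intends: the paper gives no separate proof of this corollary but justifies it in one line (at the start of the Short Proof of Theorem \ref{T:balance}) by citing Proposition \ref{P:switchingequiv}, and your two directions are precisely the expansion of that citation, using part \eqref{P:swwalk} for sufficiency and part \eqref{P:swB} applied to $\cB(\Sigma)=\cC(|\Sigma|)=\cB(+|\Sigma|)$ for necessity. Your care about the half-edge clause and about avoiding circularity with Harary's theorem is exactly right, since the corollary is indeed used to prove that theorem.
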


\begin{proof}[Short Proof of Harary's Balance Theorem \ref{T:balance}]
We may assume $\Sigma$ has no half edges.

$\Sigma$ is balanced $\iff$ (by Proposition \ref{P:switchingequiv}) it is a switching of $+|\Sigma|$ $\iff$ it equals $(+|\Sigma|)^{V_1}$ for some $V_1 \subseteq V$.  

If $\Sigma$ is balanced, $\{V_1, V \setm V_2\}$ is the required bipartition.  Conversely, if $\Sigma$ has a Harary bipartition $\{V_1, V \setm V_2\}$, then $\Sigma = (+|\Sigma|)^{V_1}$.  That proves \eqref{T:bal} $\iff$ \eqref{T:balbipartition}.

Suppose $P, Q$ are two $vw$-paths.  Then $PQ\inv$ is a closed walk, whose sign is $+$ $\iff$ $\sigma(P) = \sigma(Q)$.  As switching does not alter the sign of closed walks, $\sigma(P) = \sigma(Q)$ when $\Sigma = (+|\Sigma|)^{V_1}$.  Thus, balance implies \eqref{T:balpath}.  To prove the inverse, suppose $\Sigma$ is not balanced; then it has a negative circle.  As the circle sign is unaffected by switching, $\Sigma$ cannot switch to $+|\Sigma|$.
\end{proof}

The original proof was much less efficient---as is often the case with original proofs.

\subsubsection{An algorithm to detect balance}\label{balalg}\

Assume $\Sigma$ is connected.  We can apply the proof of Proposition \ref{P:switchingequiv}(ii) to determine whether $\Sigma$ can be switched to all positive.  That is:
\begin{enumerate}
\item Choose a spanning tree $T$ and a root vertex $v_0$.
\item Calculate the function $\zeta(v) = \sigma(T_{v_0v})$ of that proof.
\item Switch by $\zeta$.
\item Look for negative non-tree edges.  $\Sigma$ is balanced $\iff$ all non-tree edges are positive (and none is a half edge).
\end{enumerate}

This is essentially the fast algorithm of Hansen (1978a) (his Algorithm 1) and Harary and Kabell (1980a).  The use of switching makes it perhaps more obvious than in the original publications.

\subsection{Deletion, contraction, and minors}\label{sg.minors}\

Here $R, S$ denote subsets of $E$.  A \emph{component of $S$} means a component of $(V,S)$.

The \emph{deletion} of $S$ (or, the \emph{deletion of\/ $\Sigma$ by $S$}) is the signed graph $\Sigma \setm S := (V,S^c,\sigma|_{S^c})$.

The \emph{contraction} of $S$ (or, the \emph{contraction of\/ $\Sigma$ by $S$}) is a signed graph $\Sigma / S$, to be defined next, in two stages.  Contracting a single edge at a time is simpler to describe, but contracting an entire set of edges is more useful for further developments.

\subsubsection{Contracting an edge $e$}\label{sg.contractedge}\

If $e$ is a positive link, delete $e$ and identify its endpoints; do not change any edge signs.   (This is the same as contracting a link in an unsigned graph.)

If $e$ is a negative link, switch $\Sigma$ by a switching function $\zeta$, chosen so $e$ is positive in $\Sigma^\zeta$; then contract $e$ as a positive link.  The choice of $\zeta$ does not matter.  

In the following diagram the negative link $f$ is contracted.
\begin{center}
\includegraphics[scale=.8]{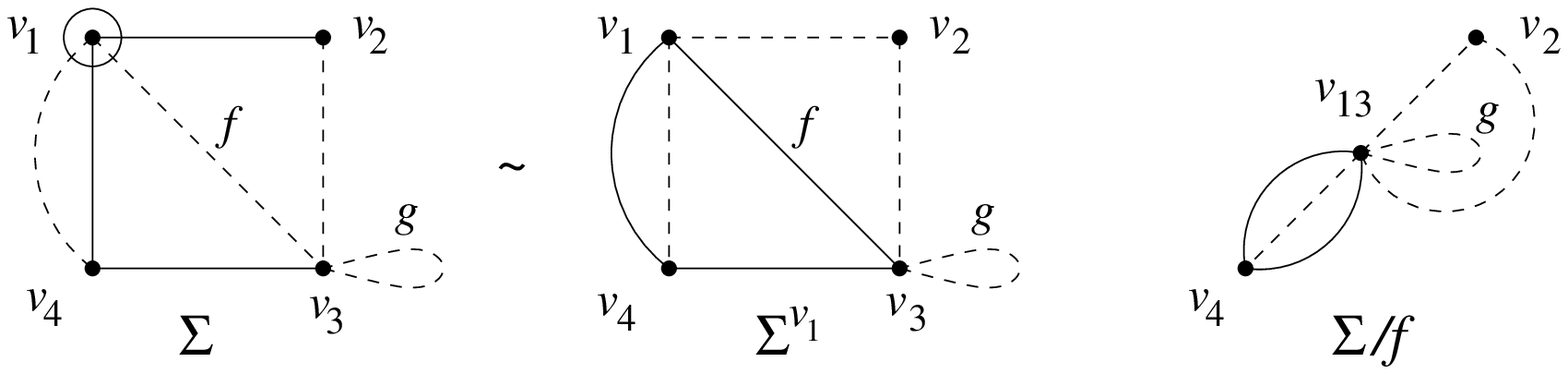}
\end{center}

\begin{lem}\label{L:linkcontraction}
In a signed graph $\Sigma$ any two contractions of a link $e$ are switching equivalent. The contraction of a link in a switching class is a well defined switching class.
\end{lem}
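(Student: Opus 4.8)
The plan is to reduce both assertions to a single commutation fact: switching a signed graph commutes with contracting a positive link $e = vw$ provided the switching function takes the same value at $v$ and at $w$. Note first that every contraction of $e$ has the same underlying graph $|\Sigma|/e$ (delete $e$ and identify $v,w$ to a single vertex $\bar v$), because switching never alters the underlying graph; hence both assertions are really about the signature alone. Throughout I use that switching functions form the group $\{+,-\}^{V}$ under pointwise multiplication, so that $(\Sigma^{\alpha})^{\beta} = \Sigma^{\alpha\beta}$ and $\alpha\alpha = +$.

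The key step is the following commutation lemma. Let $e = vw$ be \emph{positive} in $\Sigma$ and let $\eta$ be a switching function with $\eta(v) = \eta(w)$; then $e$ remains positive in $\Sigma^{\eta}$, so $(\Sigma^{\eta})/e$ and $\Sigma/e$ are both direct positive-link contractions. Because $\eta(v) = \eta(w)$, the assignment $\bar\eta(\bar v) := \eta(v) = \eta(w)$ and $\bar\eta(u) := \eta(u)$ for $u \neq \bar v$ is a well-defined switching function on $|\Sigma|/e$, and I claim
$$
(\Sigma^{\eta})/e = (\Sigma/e)^{\bar\eta}.
$$
To see this, take any edge $f \neq e$ with endpoints $x,y$; its image in $|\Sigma|/e$ has endpoints $\bar x, \bar y$, and by construction $\bar\eta(\bar x) = \eta(x)$ and $\bar\eta(\bar y) = \eta(y)$. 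Since contraction leaves all signs untouched, both sides assign $f$ the sign $\eta(x)\sigma(f)\eta(y)$. The same product computes the sign of any loop or of an edge that becomes a loop (such as a link parallel to $e$), and half edges carry no sign, so the two signatures agree.

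For the first assertion, recall that a contraction of $e$ is the direct contraction of the positive link $e$ in $\Sigma^{\zeta}$ for a switching function $\zeta$ with $\sigma^{\zeta}(e) = +$. If $e$ is positive there is only the one contraction ($\zeta = +$) and nothing to prove, so assume $e$ is negative and let $\zeta_1, \zeta_2$ be two such choices. Set $\Psi := \Sigma^{\zeta_1}$, so $e$ is positive in $\Psi$, and $\eta := \zeta_1\zeta_2$, so that $\Sigma^{\zeta_2} = \Psi^{\eta}$. From $\zeta_i(v)\sigma(e)\zeta_i(w) = +$ we get $\zeta_i(v)\zeta_i(w) = \sigma(e)$ for $i = 1,2$, whence $\eta(v)\eta(w) = \sigma(e)^{2} = +$, i.e.\ $\eta(v) = \eta(w)$. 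The commutation lemma applied to $\Psi$ now yields
$$
(\Sigma^{\zeta_2})/e = (\Psi^{\eta})/e = (\Psi/e)^{\bar\eta} = \big((\Sigma^{\zeta_1})/e\big)^{\bar\eta},
$$
so the two contractions differ by the switching $\bar\eta$ and are therefore switching equivalent.

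For the second assertion, let $\Sigma_1$ and $\Sigma_2$ be switching-equivalent signatures of the fixed graph $|\Sigma|$; as $\sim$ on a fixed underlying graph is pure switching, $\Sigma_2 = \Sigma_1^{\xi}$ for some switching function $\xi$. A contraction of $e$ in $\Sigma_2$ uses some $\zeta$ with $\sigma_2^{\zeta}(e) = +$, and then
$$
\Sigma_2/e = (\Sigma_2^{\zeta})/e = (\Sigma_1^{\xi\zeta})/e ,
$$
where $\xi\zeta$ makes $e$ positive in $\Sigma_1$. Thus $\Sigma_2/e$ is itself one of the contractions of $e$ in $\Sigma_1$, and the first assertion makes it switching equivalent to $\Sigma_1/e$. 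Together the two assertions show that $[\Sigma]/e := [\Sigma/e]$ is a well-defined switching class. The only genuine work is the commutation lemma; the single point to watch is that $\bar\eta$ be well defined, which is precisely the hypothesis $\eta(v) = \eta(w)$, together with checking that edges turning into loops obey the same sign formula.
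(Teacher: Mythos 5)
Your proof is correct and complete: the commutation identity $(\Sigma^{\eta})/e=(\Sigma/e)^{\bar\eta}$ for $\eta(v)=\eta(w)$ is exactly the right reduction, the group structure of switching functions handles the rest, and you check the one delicate case (edges parallel to $e$ becoming loops). The paper states this lemma without any proof, so there is no argument to compare against; yours is the natural one and would serve as a full proof. The only cosmetic wrinkle is in the second assertion: when $e$ is positive in $\Sigma_1$ but $\xi\zeta$ is nontrivial, the step ``the first assertion makes it switching equivalent to $\Sigma_1/e$'' is really an appeal to the commutation lemma directly rather than to the first assertion as you stated it --- which, as you note yourself, is where all the work lives anyway.
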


To contract a positive loop or a loose edge $e$, just delete $e$.

If $e$ is a negative loop or half edge and $v$ is the vertex of $e$, delete $v$ and $e$, but not any other edges.  Any other edges at $v$ lose their endpoint $v$.  A loop or half edge at $v$ becomes a loose edge.  A link with endpoints $v,w$ becomes a half edge at $w$.

In the following diagram the negative loop $g$ is contracted.
\smallskip
\begin{center}
\includegraphics[scale=.8]{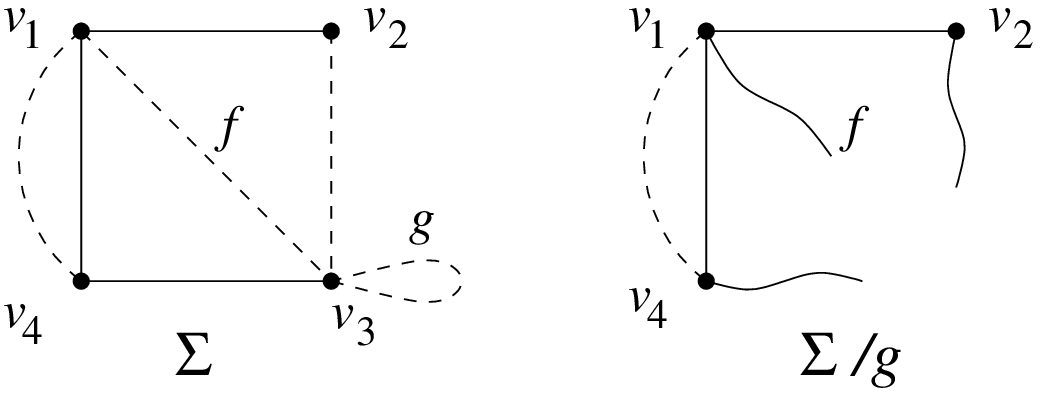}
\end{center}

\subsubsection{Contracting an edge set $S$}\label{sg.contractset}\

The edge set and vertex set of $\Sigma/S$ are
$$
E(\Sigma /S) := E \setm S, \quad V(\Sigma /S) := \pib(\Sigma|S) = \pib(S).
$$
This means we identify all the vertices of each balanced component so they become a single vertex.  For $f \in E(\Sigma/S)$, the endpoints are given by the rule 
$$
V_{\Sigma/S}(f) = \{ W \in \pib(S) : w\in V_\Sigma(f) \text{ and } w \in W \in \pib(S) \}.
$$
(For instance, suppose $f$ is a loop at $w$ in $\Sigma$, so that $V_\Sigma(f) = \{w,w\}$.  If $w \in W \in \pib(S)$, then $W$ is a repeated vertex in $V_{\Sigma/S}(f)$ so $f$ is a loop in $\Sigma/S$.  If $w \in V_0(S)$, then $V_{\Sigma/S}(f) = \eset$ so $f$ is a loose edge in $\Sigma/S$.)  
To define the signature of $\Sigma/S$, first switch $\Sigma$ to $\Sigma^\zeta$ so every balanced component of $S$ is all positive.  Then 
$$\sigma_{\Sigma/S}(e) := \sigma^\zeta(e).$$

\begin{lem}\label{L:contractionequiv}
\begin{enumerate}[\rm(a)]
\item Given $S \subseteq E(\Sigma)$, all contractions $\Sigma /S$ (by different choices of how to switch $\Sigma$) are switching equivalent.  Any switching of one contraction $\Sigma/S$ is another contraction and any contraction $\Sigma^{\zeta} /S$ of a switching of\/ $\Sigma$ is a contraction of $\Sigma$.
\label{L:contractionsw}
\item If\/ $|\Sigma_1| = |\Sigma_2|$, $S \subseteq E$ is balanced in both $\Sigma_1$ and $\Sigma_2$, and $\Sigma_1 /S$ and $\Sigma_2 /S$ are switching equivalent, then $\Sigma_1$ and $\Sigma_2$ are switching equivalent.
\label{L:contractionswequiv}
\item For $e \in E$, $[\Sigma/e]$ and\/ $[\Sigma/\{e\}]$ are essentially the same switching class.
\label{L:contractionswclass}
\end{enumerate}
\end{lem}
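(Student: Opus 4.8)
\section*{Proof proposal}

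The plan is to handle the three parts in order, with part \eqref{L:contractionsw} carrying the essential idea and parts \eqref{L:contractionswequiv} and \eqref{L:contractionswclass} following by a lifting argument and a case check. For part \eqref{L:contractionsw} the heart of the matter is to pin down the ambiguity in the switching function used to define $\sigma_{\Sigma/S}$. Call a switching function $\zeta$ \emph{admissible} if it makes every balanced component of $S$ all positive. First I would show that if $\zeta,\zeta'$ are both admissible then $\eta := \zeta\zeta'$ is constant on each balanced component of $S$: for every edge $ab\in S$ inside such a component, $\sigma^\zeta(ab)=\sigma^{\zeta'}(ab)=+$ forces $\eta(a)=\eta(b)$, and connectedness does the rest; on $V_0(S)$ the function $\eta$ is arbitrary. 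The key observation is that this freedom descends to a switching function on $\Sigma/S$: set $\bar\eta(W):=\eta(w)$ for $w\in W\in\pib(S)$, well defined by constancy. Since $\sigma^\zeta(f)=\eta(v)\eta(w)\,\sigma^{\zeta'}(f)$ for any link or loop $f$ with endpoints $v,w$, switching the $\zeta'$-contraction by $\bar\eta$ yields the $\zeta$-contraction. The one delicate point, which I expect to be the main obstacle, is to verify that the arbitrary values of $\eta$ on $V_0(S)$ never intervene: any $f$ with an endpoint in $V_0(S)$ becomes a half edge or a loose edge in $\Sigma/S$ and so carries no sign, whence only endpoints lying in balanced components contribute to $\sigma_{\Sigma/S}$. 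The difficulty is not depth but bookkeeping — one must track exactly which edges become unsigned to see that the $V_0(S)$ ambiguity is invisible.

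The remaining two assertions of \eqref{L:contractionsw} are reversals of the same computation. Given a switching $\bar\xi$ of a fixed contraction defined by $\zeta_0$, I lift it to $\zeta_1(v):=\bar\xi(W)\zeta_0(v)$ for $v\in W\in\pib(S)$ (unchanged on $V_0(S)$), check that $\zeta_1$ is again admissible, and recognize $(\Sigma/S)^{\bar\xi}$ as the contraction via $\zeta_1$; thus every switching of a contraction is a contraction. Conversely, contracting $S$ in a switching $\Sigma^\zeta$ by an admissible $\mu$ uses the composite $\zeta\mu$, and since balance is switching invariant (Proposition~\ref{P:switchingequiv}\eqref{P:swwalk}) the balanced components of $S$ are the same in $\Sigma$ and $\Sigma^\zeta$; hence $\zeta\mu$ is admissible for $\Sigma$ and $\Sigma^\zeta/S$ is literally a contraction of $\Sigma$.

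For part \eqref{L:contractionswequiv} I would first normalize. Because $S$ is balanced in each $\Sigma_i$, Corollary~\ref{C:swbalance} lets me switch $\Sigma_i$ to some $\Sigma_i'$ in which $S$ is all positive; by part \eqref{L:contractionsw} this alters neither $[\Sigma_i]$ nor $[\Sigma_i/S]$, so it suffices to treat the all-positive case. There $V_0(S)=\eset$, $\pib(S)$ is just the common set of components of $(V,S)$, and $\sigma_{\Sigma_i/S}$ is the plain restriction of $\sigma_i$ to $E\setm S$. Given a switching $\bar\xi$ with $(\Sigma_1/S)^{\bar\xi}=\Sigma_2/S$, I lift it to $\xi(v):=\bar\xi(W)$ for $v\in W$ and verify $\Sigma_1^\xi=\Sigma_2$: on edges of $S$ both signs are $+$ and the two factors $\bar\xi(W)$ cancel, while on edges of $E\setm S$ the equality is exactly the defining relation of $\bar\xi$. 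Transitivity of $\sim$ then gives $\Sigma_1\sim\Sigma_2$.

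For part \eqref{L:contractionswclass} I would simply run through the edge types, comparing the single-edge recipe of Section~\ref{sg.contractedge} with the set contraction $\Sigma/\{e\}$. A positive link or positive loop spans a balanced one-edge component, so its vertices survive in $\pib(\{e\})$ and the admissible $\zeta$ may be taken trivial, reproducing ``identify the endpoints'' or ``delete the loop'' with signs unchanged; a negative link spans a balanced component as well, and choosing $\zeta$ to make $e$ positive before identifying endpoints is precisely the negative-link rule; and a negative loop or half edge makes its component unbalanced, so its vertex lies in $V_0(\{e\})$, disappears, and its incident edges lose that endpoint, matching the deletion rule. The phrase ``essentially the same'' absorbs the only discrepancy, namely that $\Sigma/\{e\}$ names the new vertex by its $\pib$-class while $\Sigma/e$ names it by the merged vertex; the evident bijection of vertex sets carries one signature to the other, so the two switching classes coincide.
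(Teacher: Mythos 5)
The paper states this lemma without giving any proof (it is left as routine, with only the interpretive remarks that follow it), so there is no argument of the paper's to compare against. Your proposal is correct and is the natural argument: the crux is exactly what you identify, namely that the quotient $\eta=\zeta\zeta'$ of two admissible switching functions is constant on each balanced component of $S$ and therefore descends to a switching function $\bar\eta$ on $V(\Sigma/S)=\pib(S)$, while its arbitrary values on $V_0(S)$ are invisible because every edge retaining an endpoint there becomes a half or loose edge and so carries no sign in $\Sigma/S$. The lifting arguments for the remaining claims of (a) and for (b), and the case check in (c), are all sound; the only point worth making explicit in a written-up version is that the isolated vertices of $(V,S)$ count as balanced (singleton) components, so in (b) the lift $\xi(v):=\bar\xi(W)$ is indeed defined on all of $V$, and in (a) the constancy of $\eta$ on such singletons is vacuous but harmless.
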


Part \eqref{L:contractionsw} means that the switching class $[\Sigma/S]$ is uniquely defined, even though the signed graph $\Sigma/S$ is not unique.  Part \eqref{L:contractionswclass} means that $[\Sigma/e] = [\Sigma/\{e\}]$ except for details of notation.

\subsubsection{Minors}\label{sg.minor}\

A \emph{minor} of $\Sigma$ is any contraction of any subgraph.

\begin{thm}\label{T:minors}
A minor of a minor is a minor.  Thus, the result of any sequence of deletions and contractions of edge and vertex sets of\/ $\Sigma$ is a minor of\/ $\Sigma$.  
\end{thm}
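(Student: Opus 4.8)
The plan is to reduce the whole statement to a single closure property: the family of minors of $\Sigma$ is closed under each of the three elementary operations --- deleting an edge set, deleting a vertex set, and contracting an edge set. This suffices for both assertions. For the second sentence, any finite sequence of deletions and contractions is a composition of elementary operations, so induction on the length of the sequence, using closure at each step, shows the outcome is a minor. For the first sentence, a minor of a minor $N$ is by definition a contraction of a subgraph of $N$, that is, the result of finitely many deletions followed by a contraction applied to $N$; since $N$ is already a minor of $\Sigma$ and the minor family is closed under these elementary operations, the result is again a minor of $\Sigma$.

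First I would write each minor in the normal form $N = \Sigma'/S$, where $\Sigma'$ is a subgraph of $\Sigma$ and $S \subseteq E(\Sigma')$. Closure then rests on three identities, each to be read up to switching equivalence, which is all that contraction determines by Lemma \ref{L:contractionequiv}. Contractions compose: for $T \subseteq E(N) = E(\Sigma')\setm S$ one has $(\Sigma'/S)/T \sim \Sigma'/(S\cup T)$, so a further contraction of $N$ is again a single contraction of the subgraph $\Sigma'$. Deletion passes through contraction: for $R \subseteq E(\Sigma')\setm S$ one has $(\Sigma'/S)\setm R \sim (\Sigma'\setm R)/S$, so a further edge-deletion is a contraction of the smaller subgraph $\Sigma'\setm R$. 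Finally, vertex deletion passes through contraction: for $Y \subseteq V(N) = \pib(S)$, setting $U := \bigcup_{W\in Y} W \subseteq V(\Sigma')$, one has $(\Sigma'/S)\setm Y \sim (\Sigma'\setm U)/S'$, where $S'$ is the set of edges of $S$ meeting no vertex of $U$. In each case the right-hand side is visibly a contraction of a subgraph of $\Sigma$, hence a minor, which closes the family under that operation.

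Verifying the second and third identities is largely bookkeeping. Since the $W \in \pib(S)$ are entire components of $S$, deleting the vertex block $U$ removes complete $S$-components and so does not disturb the remaining balanced components; one checks that the surviving balanced partition is $\pib(S)\setm Y$, that the surviving edges and their endpoint assignments agree on both sides, and that no switching is forced because the untouched $S$-edges keep their signs. The disjoint-deletion identity is similar, reducing to the familiar unsigned deletion--contraction commutation together with routine sign bookkeeping. I expect the main obstacle to be the first identity, the composition of contractions $\Sigma'/(S\cup T) \sim (\Sigma'/S)/T$, because contraction is defined only after switching each balanced component to all-positive and then folding unbalanced material into half and loose edges via the endpoint rule $V_{\Sigma/S}(f)$. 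The crux is to show that the balanced components of $T$ inside $\Sigma'/S$ are exactly the images of the balanced components of $S\cup T$ inside $\Sigma'$, and that the vertices falling into $V_0$ agree on both sides. This uses that switching does not change any circle sign (Proposition \ref{P:switchingequiv}\eqref{P:swwalk}), so balance of a circle, hence of a component, may be tested either before or after contracting $S$; choosing a switching of $\Sigma'$ under which every balanced component of $S\cup T$ is all-positive, available by Corollary \ref{C:swbalance}, then makes the two signatures literally coincide on $E(\Sigma')\setm(S\cup T)$. Once these correspondences are in hand, both contractions carry the vertex set $\pib(S\cup T)$, the same edges, the same endpoint data, and switching-equivalent signatures, which gives the desired equivalence and completes the closure argument.
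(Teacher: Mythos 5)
Your proposal is correct and is essentially the argument the paper relies on: the text gives no proof here, deferring to Zaslavsky (1982a), Proposition 4.2, and that proof proceeds exactly by your scheme of closing the minor family under the elementary operations via the three commutation identities, with the composition of contractions $(\Sigma'/S)/T \sim \Sigma'/(S\cup T)$ as the crux. The key fact you single out --- that the balanced components of $T$ in $\Sigma'/S$ correspond precisely to the balanced components of $S\cup T$ in $\Sigma'$ (unbalanced $S$-components turning incident $T$-edges into half or loose edges, which keeps the corresponding components unbalanced on both sides) --- is the right one, so there is no gap in the approach.
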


\begin{proof}
The proof is technical but not deep.  See Zaslavsky (1982a), Proposition 4.2.
\end{proof}

\subsection{Frame circuits}\label{circuit}\

A \emph{frame circuit} of $\Sigma$ is a subgraph, or edge set, that is either a positive circle or a loose edge, or a pair of negative circles that intersect in precisely one vertex and no edges (this is a \emph{tight handcuff circuit}), or a pair of disjoint negative circles together with a minimal path that connects them (this is a \emph{loose handcuff circuit}).  We regard a tight handcuff circuit as having a connecting path of length 0 (it is the common vertex of two the circles).  A half edge and a negative loop are equivalent in everything that concerns frame circuits; a `negative circle' in the definition may be a half edge.

The three kinds of frame circuit:

\bigskip
\hfill
{\includegraphics[scale=.6]{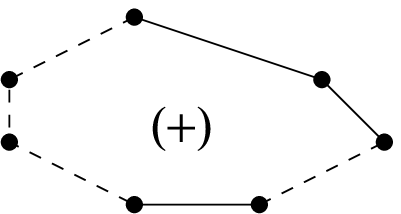}}
\hfill
{\includegraphics[scale=.6]{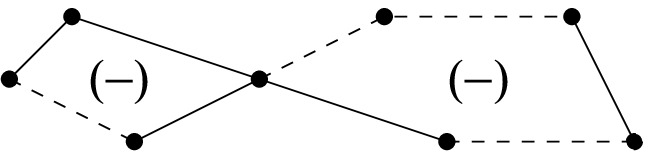}}
\hfill
{\includegraphics[scale=.6]{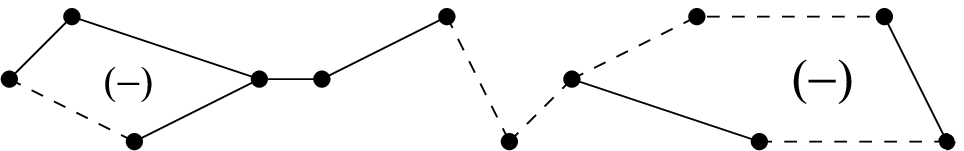}}
\hfill
\bigskip

In $+\Gamma$ (if $\Gamma$ has no half edges), a frame circuit is simply a circle or a loose edge.

\begin{prop}\label{P:handcuffs}
$\Sigma$ contains a loose handcuff circuit if and only if there is a component of\/ $\Sigma$ that contains two vertex-disjoint negative circles.
\end{prop}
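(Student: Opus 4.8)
The plan is to handle the two directions separately; the forward implication is immediate from the definitions, while the reverse implication is a routine shortest-path argument, so essentially all the content lies there.

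For the forward direction, suppose $\Sigma$ contains a loose handcuff circuit. By definition it consists of two disjoint negative circles $C_1, C_2$ together with a path joining them. These circles are vertex-disjoint by definition, and the joining path forces them into a single component of $\Sigma$; hence that component contains two vertex-disjoint negative circles, exactly as required.

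For the reverse direction, suppose some component $\Sigma'$ of $\Sigma$ contains two vertex-disjoint negative circles $C_1$ and $C_2$. (I allow either $C_i$ to be a half edge or negative loop, as the definition permits; then $V(C_i)$ is a single vertex and the argument below is unaffected.) Since $\Sigma'$ is connected and both circles lie in it, there is a path in $\Sigma'$ from a vertex of $C_1$ to a vertex of $C_2$. Among all such paths I would choose one, $P$, of minimum length, with endpoints $u \in V(C_1)$ and $w \in V(C_2)$. I would then check that $P$ meets $C_1 \cup C_2$ only at $u$ and $w$: if some internal vertex $x$ of $P$ lay on $C_1$, the subpath of $P$ from $x$ to $w$ would be a strictly shorter path from $V(C_1)$ to $V(C_2)$, contradicting minimality, and a symmetric argument rules out an internal vertex on $C_2$. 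Because $C_1$ and $C_2$ are vertex-disjoint we have $u \neq w$, so $P$ has length at least $1$; thus $C_1 \cup P \cup C_2$ is a pair of disjoint negative circles joined by a minimal connecting path, i.e.\ a loose handcuff circuit contained in $\Sigma$.

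The main (and only mild) obstacle is the internal-disjointness claim for $P$, which is just the standard observation that a shortest path between two vertex sets touches each set only at its corresponding endpoint; I expect no genuine difficulty, and everything else is bookkeeping against the definition of a loose handcuff circuit.
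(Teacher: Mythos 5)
Your proof is correct: the forward direction is indeed immediate from the definition, and your shortest-path argument for the converse (choosing a minimum-length path between $V(C_1)$ and $V(C_2)$ and noting it meets each circle only at its endpoint) is exactly the elementary argument the paper alludes to when it omits the proof with the remark ``The proof is elementary.'' Nothing is missing.
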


The proof is elementary.  The next is less elementary.

\begin{prop}\label{P:ehandcuff}
Let $e$ be an edge in $\Sigma$.  
Then $e$ is contained in a frame circuit if and only if $e$ is not a partial balancing edge.
\end{prop}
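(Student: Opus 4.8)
The plan is to reduce to the connected case and then prove the logically equivalent assertion that $e$ is a partial balancing edge if and only if $e$ lies in no frame circuit, reading the three types of partial balancing edge off Proposition~\ref{P:baledge}. First I would dispose of the vertex-free cases: a loose edge is itself a frame circuit and is never a partial balancing edge, and a positive loop is a positive circle (hence a frame circuit) and is never a partial balancing edge. For an edge $e$ lying in a component $\Sigma'$, deleting $e$ affects only $\Sigma'$, so neither ``being a partial balancing edge'' nor ``lying in a frame circuit'' involves the other components; thus I may assume $\Sigma$ is connected, exactly as in the proof of Proposition~\ref{P:baledge}.

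For the direction ``partial balancing edge $\Rightarrow$ no frame circuit'' I would treat the three cases of Proposition~\ref{P:baledge} in turn. If $e$ is an isthmus \eqref{P:balisthmus}, it lies in no circle, so it could only occur on the connecting path of a loose handcuff; but such a path crosses the isthmus, placing the two negative circles on opposite sides of $e$, and the balanced side carries no negative circle. If $e$ is a negative loop or half edge \eqref{P:balloop} with $\Sigma\setm e$ balanced, then $e$ is the only negative circle present, so a handcuff would require a second negative circle inside the balanced $\Sigma\setm e$. If $e$ is a non-isthmus link \eqref{P:ballink}, every $vw$-path has sign $-\sigma(e)$, so every circle through $e$ is negative (no positive circle) and every negative circle uses $e$; hence two handcuff circles would share the edge $e$, while if $e$ lay on a connecting path the two circles would live in the balanced $\Sigma\setm e$. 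Each possibility is contradictory.

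For the converse I would argue the contrapositive, building a frame circuit whenever $e$ is not a partial balancing edge. An isthmus that is not a partial balancing edge has both sides unbalanced \eqref{P:balisthmus}, so each side holds a negative circle and $e$ joins them as the connecting path of a loose handcuff. A negative loop or half edge that is not a partial balancing edge has $\Sigma\setm e$ unbalanced \eqref{P:balloop}; pairing $e$ with a negative circle of $\Sigma\setm e$ gives a tight handcuff if that circle meets the vertex of $e$ and a loose one otherwise. For a non-isthmus link $e=vw$, if some $vw$-path $P$ has $\sigma(P)=\sigma(e)$ then $e\cup P$ is a positive circle and we are done; the remaining possibility is that every circle through $e$ is negative, and then by \eqref{P:ballink} the graph $\Sigma\setm e$ must be unbalanced.

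This last possibility is the \emph{main obstacle}, since I must manufacture two essentially disjoint negative circles with $e$ inside the resulting handcuff. The key step is a block-and-connectivity argument. Let $B_e$ be the block containing $e$, and I claim no negative circle of length $\ge 2$ lies inside $B_e$: if such a $D\subseteq B_e$ existed, then by the $2$-connectivity of $B_e$ there are two vertex-disjoint paths joining $\{v,w\}$ to distinct vertices $p,q$ of $D$, and these split $D$ into two arcs whose signs multiply to $\sigma(D)=-$, so the two circles through $e$ obtained by closing up through either arc have opposite signs, producing a positive circle through $e$ and contradicting that every circle through $e$ is negative. Since $\Sigma\setm e$ is unbalanced it contains a negative circle, and a negative loop or half edge automatically occupies its own block; either way some negative circle $D$ lies in a block other than $B_e$. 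A circle $C_e$ through $e$ (necessarily negative) lies in $B_e$ and so meets $D$ in at most one vertex and in no edge; hence $C_e$ and $D$ form a tight handcuff if they share a vertex, or, after adjoining a minimal connecting path, a loose handcuff if they are disjoint. In each case this is a frame circuit containing $e$, which completes the argument.
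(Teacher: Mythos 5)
Your proof is correct in substance but takes a genuinely different route from the paper's in the one place where the result is hard. Both arguments reduce to a connected component and both handle the easy cases (isthmus, unbalanced edge, positive circle through $e$) similarly, but you organize the whole proof around the classification of Proposition \ref{P:baledge}, whereas the paper's necessity direction is a direct case analysis on where $e$ sits inside a given frame circuit. The real divergence is in sufficiency for a non-isthmus link $e=vw$ with every circle through $e$ negative: the paper fixes a negative circle $C'$ through $e$ and a negative circle $D$ of $\Sigma'\setm e$ and splits into cases by $|V(C')\cap V(D)|$, disposing of the case of two or more common vertices with a theta-graph argument (a maximal subpath of $C'$ through $e$ internally disjoint from $D$ closes up through one of the two arcs of $D$ to a positive circle). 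You instead prove a structural claim --- every negative circle of $\Sigma\setm e$ lies outside the block $B_e$ --- via the fan form of Menger's theorem, so that $C_e$ and $D$ automatically meet in at most one vertex and no edges and the multi-intersection case never arises. Your route buys a cleaner statement at the cost of invoking $2$-connectivity/Menger; the paper's theta argument is more elementary and self-contained. One correction you should make: your claim as written, ``no negative circle of length $\ge 2$ lies inside $B_e$,'' is false --- any circle through $e$ is negative and lies in $B_e$, and your two-arc construction degenerates exactly when $D$ contains $e$. The claim must be restricted to negative circles avoiding $e$ (equivalently, circles of $\Sigma\setm e$), which is all you actually use; with that restriction the Menger argument goes through, including when $D$ already meets $\{v,w\}$, and the rest of your proof stands.
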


\begin{proof}
If $e$ lies in a balanced component of $\Sigma$, it is in a frame circuit $\iff$ it is in a circle $\iff$ it is not an isthmus $\iff$ it is not a partial balancing edge.  Therefore we may assume $e$ is in an unbalanced component $\Sigma'$.
\smallskip

\emph{Necessity.}  
If $e$ is in a frame circuit $C$, then $\Sigma'$ contains $C$.  
\smallskip

\noindent\parbox{3.5in}{\quad
If $e$ is an isthmus of $C$, then $\Sigma' \setm e$ contains both negative circles of $C$.  If $\Sigma' \setm e$ is disconnected, each of its two components contains one of those negative circles.  Therefore, $e$ is not a partial balancing edge.  
}
\hfill\raisebox{-.8cm}{\includegraphics[scale=.6]{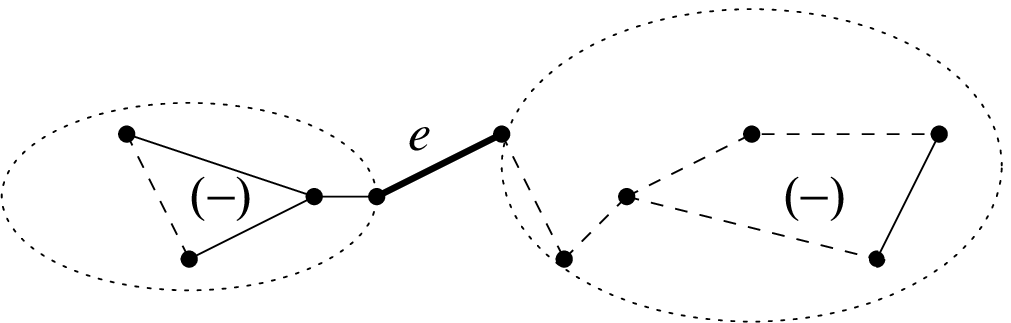}}\hfill	
\medskip

\noindent\parbox{3.5in}{\quad
If $e$ belongs to a circle in $C$, then $\Sigma' \setm e$ is connected.  Suppose $C$ is unbalanced; then $C \setm e$ is unbalanced so $\Sigma' \setm e$ is unbalanced; thus, $e$ is not a partial balancing edge.  
}
\hfill\raisebox{-1.1cm}{\includegraphics[scale=.6]{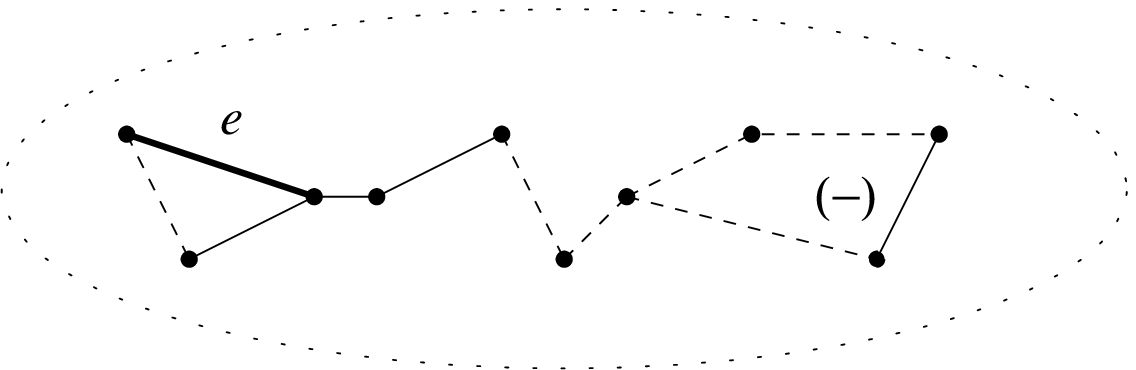}}\hfill	
\medskip

\noindent\parbox{3.5in}{\quad
Suppose to the contrary that $C$ is a positive circle.  As there is a negative circle $D$ in $\Sigma'$, for $e$ to be a partial balancing edge it must belong to $D$; we show this leads to a contradiction.  If $C \cup D \setm e$ were balanced, it could be switched to be all positive and then, as $D$ is negative, $e$ would be negative in the switched graph, but that would contradict the positivity of $C$.  Thus, $C \cup D \setm e$ is unbalanced; therefore it contains a negative circle, so $\Sigma' \setm e$ is unbalanced and $e$ is not a partial balancing edge.
}
\hfill\raisebox{-.9cm}{\includegraphics[scale=.6]{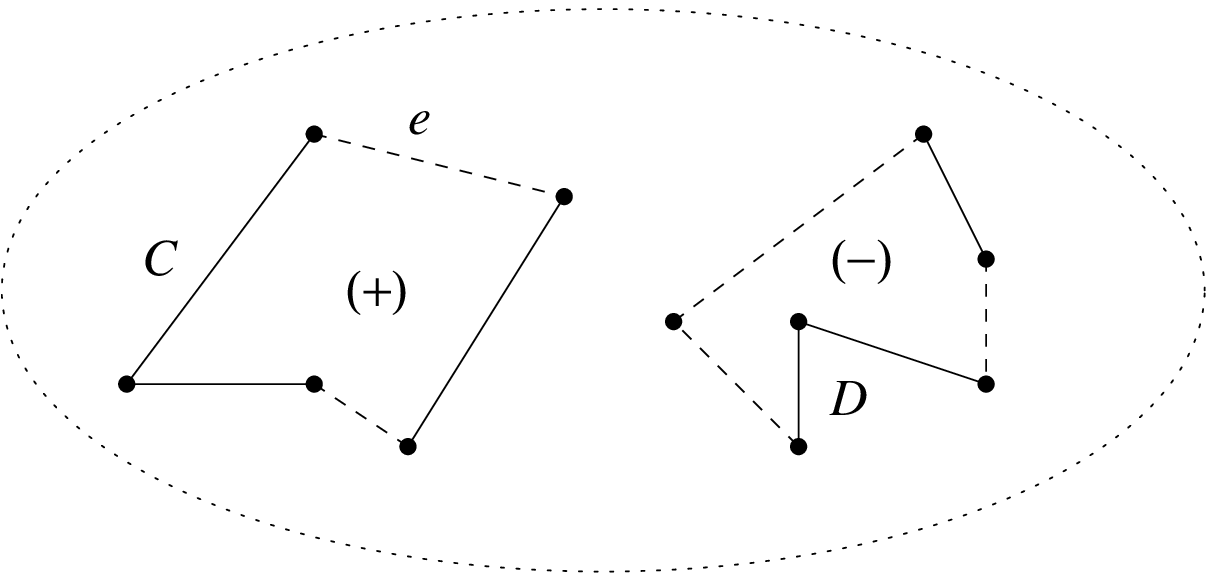}}\hfill	
\smallskip

\emph{Sufficiency.}  
Conversely, suppose $e$ is not a partial balancing edge; we produce a frame circuit $C$ containing $e$.  As $\Sigma' \setm e$ is unbalanced, it has a negative circle $D$.  
If $e$ is an unbalanced edge (a half edge or negative loop) at $v$, there is a path $P$ in $\Sigma'$ from $v$ to $D$; then $C = D \cup P \cup e$.  

If $e$ is a balanced edge, it is a link with endpoints $v,w$.  

If it is an isthmus, then $\Sigma' \setm e$ has two components, both unbalanced (by Proposition \ref{P:baledge}), so $C$ is a negative circle in each of those components together with a connecting path (which must contain $e$).  

If $e$ is not an isthmus, it lies in a circle $C'$.  If $C'$ is positive, let $C = C'$.  But suppose $C'$ is negative; then there are three subcases, depending on how many points of intersection $C'$ has with $D$.  If there are no such points, take a minimal path $P$ connecting $C'$ to $D$ and let $C = D \cup P \cup C'$.  If there is just one such point, $C = D \cup C'$.  If there are two or more such points, take $P$ to be a maximal path in $C'$ that contains $e$ and is internally disjoint from $D$.  Then $P \cup D$ is a theta graph in which $D$ is negative; hence one of the two circles containing $P$ is positive, and this is the circuit $C$.
\end{proof}

Proposition \ref{P:handcuffs} suggests vertex-disjoint negative circles are important, which is true.  There is an important theorem about when they do not exist.

\begin{thm}[Slilaty (2007a)]\label{T:2disjointnegcircles}
$\Sigma$ has no two vertex-disjoint negative circles if and only if one or more of the following is true:
\begin{enumerate}[{\rm(a)}]
\item  $\Sigma$ is balanced,
\item  $\Sigma$ has a balancing vertex,
\item  $\Sigma$ embeds in the projective plane, or
\item $\Sigma$ is one of a few exceptional cases.
\end{enumerate}
\end{thm}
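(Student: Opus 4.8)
The plan is to prove both implications; the ``if'' direction is routine, so I address it first. If $\Sigma$ is balanced it has no negative circle at all, so (a) is immediate. If $v$ is a balancing vertex then $\Sigma\setm v$ is balanced, so every negative circle passes through $v$; two such circles thus share $v$ and cannot be vertex-disjoint, giving (b). For (c) I would use the topological meaning of a projective-planar signed graph: $|\Sigma|$ is embedded in $\mathbb{RP}^2$ so that a circle is negative exactly when it is noncontractible. Since the mod-$2$ intersection form on $H_1(\mathbb{RP}^2;\bbZ_2)\cong\bbZ_2$ is nondegenerate, any two noncontractible simple closed curves meet, so any two negative circles share a vertex. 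Case (d) is disposed of by inspecting the finite list of exceptions directly.

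The substance of the theorem is the converse. The plan is to assume $\Sigma$ has no two vertex-disjoint negative circles and is excluded from (a), (b), and (d), and to produce a projective-planar embedding. First I would reduce the connectivity. Two negative circles lying in different components are automatically vertex-disjoint, so at most one component is unbalanced; by Proposition \ref{P:balblocks} the problem localizes to the unbalanced blocks, and two distinct unbalanced blocks can share at most a cutpoint, so unless that cutpoint lies on every negative circle---making it a balancing vertex, contrary to assumption---one extracts two vertex-disjoint negative circles. This pushes the problem to a single highly connected unbalanced signed graph, where Proposition \ref{P:handcuffs} reformulates the hypothesis cleanly: $\Sigma$ contains no loose handcuff frame circuit, so its negative circles pairwise intersect yet, by the failure of (b), have no common vertex.

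The main obstacle is to convert this purely combinatorial packing condition into an actual surface embedding. I would do this by an excluded-minor analysis. The property ``no two vertex-disjoint negative circles'' is inherited by deletions and by the signed contractions of Section \ref{sg.minors} (contraction only identifies vertices, and a vertex-disjoint pair of negative circles in a minor pulls back to one in $\Sigma$), so the class is closed under signed minors, and it suffices to show that any signed graph that is neither projective-planar nor one of the exceptions in (d) contains, as a signed minor, a configuration carrying two vertex-disjoint negative circles. Assembling the finite list of such obstructions---the signed analogue of the excluded minors for projective-planar graphs---and verifying that every signed graph avoiding them either embeds in $\mathbb{RP}^2$, admits a balancing vertex, or is exceptional, is the crux and the hardest step; it is here that one must rule out the infinitely many near-embeddings and pin down the short exceptional list. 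A guiding sanity check is the all-negative specialization: by Corollary \ref{C:balbipartite} the statement for $-\Gamma$ reduces to the known structure theorem for graphs with no two vertex-disjoint odd circles, which both motivates the appearance of projective planarity and serves as a consistency test for the general argument.
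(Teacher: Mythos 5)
The paper does not prove this theorem at all: it is quoted from Slilaty (2007a) with an explicit disclaimer that projective-planar embedding ``is a large topic in itself,'' so there is no in-paper argument to compare against. Judged on its own terms, your proposal establishes only the easy half. The ``if'' direction is essentially correct: balance and balancing vertices are immediate, and the $\bbZ_2$-intersection-form argument for why two disjoint simple closed curves in the projective plane cannot both be noncontractible is the right reason that a projective-planar signed graph (under the convention that negative circles are exactly the noncontractible ones) has no two vertex-disjoint negative circles.

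For the converse, however, what you have written is a plan, not a proof, and you say so yourself: ``assembling the finite list of such obstructions \dots\ is the crux and the hardest step.'' That crux is the entire content of the theorem. Nothing in your sketch produces the projective-planar embedding, identifies the exceptional cases, or even shows the exceptional list is finite; the block/cutpoint reduction and the appeal to Proposition \ref{P:handcuffs} only restate the hypothesis in a connected setting. Two further points in the plan would need repair even as a plan. First, the claim that ``no two vertex-disjoint negative circles'' is closed under the signed contractions of Section \ref{sg.minors} is not automatic: contracting an edge set $S$ with unbalanced components deletes the vertices of $V_0(S)$ and turns links into half edges, so a vertex-disjoint pair of negative circles (or half edges) in $\Sigma/S$ need not pull back to a vertex-disjoint pair of negative circles in $\Sigma$; one must restrict the class of minors used or argue this carefully. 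Second, in the block reduction, ``every negative circle of an unbalanced block passes through the cutpoint $v$'' makes $v$ a balancing vertex of that block, not of $\Sigma$, so the contradiction with the failure of (b) requires an extra step assembling the blocks. The real proof (Slilaty's) rests on a substantial structural decomposition that your outline gestures at but does not supply.
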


We will not discuss embedding in the projective plane, which is a large topic in itself; see Zaslavsky (1993a)  Archdeacon and Debowsky (2005a).

\subsection{Closure and closed sets}\label{clos}\

Closure of edge sets is a main property of ordinary graphs.  It generalizes to signed graphs but it becomes much more complicated.

\subsubsection{Closure in signed graphs}\label{sgclos}\

The definition of closure begins with an auxiliary operation.  The \emph{balance-closure} of an edge set $S$ is
$$
\bcl(S) := S \cup \{ e \in S^c: \exists \text{ a positive circle } C \subseteq S \cup e \text{ such that } e \in C \} \cup E_0(\Sigma) .
$$
The \emph{closure} of $S$ is 
$$
\clos(S) := \big(E{:}V_0(S) \big) \cup \Big( \bigcup_{i=1}^k \bcl(S_i) \Big) \cup E_0(\Sigma),
$$
where $S_1, \ldots, S_k$ are the balanced components of $S$.

An edge set is \emph{closed} if it equals its own closure: $\clos S = S$.  We write
$$
\Lat\Sigma := \{ S \subseteq E : S \text{ is closed} \}.
$$
When partially ordered by set inclusion, $\Lat\Sigma$ is a lattice.

A half edge and a negative loop are equivalent in everything that concerns closure.

The usual closure operator in a graph $\Gamma$ is the same as closure in $+\Gamma$.  By way of comparison, observe that the only kind of frame circuit in an ordinary graph is a circle---any circle, since in $+\Gamma$ every circle is positive.  That makes graph closure simple.

\subsubsection{Balance properties}\label{closbal}\

\begin{lem}\label{L:bcl}
For an edge set $S$, $\bcl(S)$ is balanced if and only if $S$ is balanced.  Furthermore, if $S$ is balanced, $\bcl(\bcl S) = \bcl(S) = \clos(S)$.
\end{lem}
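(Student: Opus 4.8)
The plan is to reduce everything to the all-positive case by switching, exploiting that $\bcl$, balance, and the partition of the vertices into components are all switching invariant: balance-closure depends only on $S$ and on the positive-circle class $\cB(\Sigma)$, and the latter is unchanged under switching by Proposition \ref{P:switchingequiv}\eqref{P:swwalk}. The easy direction of the equivalence is then immediate. Since $S\subseteq\bcl(S)$ and a subset of a balanced edge set is balanced---its circles and half edges are among those of the larger set---balance of $\bcl(S)$ forces balance of $S$.

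For the converse, suppose $S$ is balanced. First I would note that $\bcl(S)$ has no half edges, since a half edge lies in no circle and so is never adjoined. By Corollary \ref{C:swbalance} I may switch $\Sigma$ so that every edge of $S$ is positive. I then claim every edge adjoined to $\bcl(S)$ is positive as well: if $e$ is adjoined by a positive circle $C\subseteq S\cup\{e\}$ with $e\in C$, then either $C=\{e\}$ is a positive loop, or $C\setminus\{e\}$ is a $vw$-path $P\subseteq S$ with $v,w$ the ends of $e$; since $\sigma(P)=+$ and $\sigma(C)=\sigma(e)\sigma(P)=+$, necessarily $\sigma(e)=+$. Thus after switching $\bcl(S)$ is all positive with no half edges, hence balanced, and balance is switching invariant.

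For the ``furthermore,'' I would first record a component-preservation fact: whenever an edge is adjoined to $\bcl(S)$ its endpoints are already joined inside $S$ by the path $C\setminus\{e\}$, so $S$ and $\bcl(S)$ induce the same partition of $V$ into components. To get $\bcl(S)=\clos(S)$, balance of $S$ means every component of $S$ is balanced, so $V_0(S)=\eset$, whence $E{:}V_0(S)=\eset$ and $S_1,\dots,S_k$ are exactly the components; since the witnessing path $C\setminus\{e\}$ is connected it lies in a single component $S_i$, giving $\bcl(S)=\bigcup_i\bcl(S_i)\cup E_0(\Sigma)=\clos(S)$. For idempotence I would set $T:=\bcl(S)$, balanced by the first part, and switch so that $T$ is all positive (whence so is $S\subseteq T$); any $e$ adjoined to $\bcl(T)$ is then positive with ends $v,w$ joined in $T$, hence by component preservation joined in $S$ by a path $P'$, so $P'\cup\{e\}$ is a positive circle in $S\cup\{e\}$ and $e\in\bcl(S)=T$ already. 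Therefore $\bcl(\bcl S)=\bcl(S)$.

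I expect the main obstacle to be the idempotence step, specifically the need to transport the connecting path from $\bcl(S)$ back down into $S$. This is exactly what the component-preservation observation supplies, and it---together with the harmless bookkeeping of the loop and half-edge cases---is what makes the switching reduction go through cleanly.
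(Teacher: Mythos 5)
The paper states Lemma \ref{L:bcl} without giving a proof, so there is nothing to compare against; judged on its own, your argument is correct and complete. The switching reduction is exactly the right tool: $\bcl$ depends only on the class of positive circles, which is switching-invariant, so after switching a balanced $S$ (or $T=\bcl(S)$) to all positive, every adjoined edge is forced positive by the sign computation $\sigma(e)=\sigma(C)\sigma(C\setminus e)$, and the component-preservation observation is precisely what lets you pull the connecting path from $\bcl(S)$ back down into $S$ for idempotence. The identification $\bcl(S)=\clos(S)$ for balanced $S$ also checks out, since $V_0(S)=\eset$ makes the term $E{:}V_0(S)$ vanish and the witnessing path $C\setminus e$ confines each adjoined edge to a single balanced component. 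The only loose end is the degenerate circle cases (a positive loop as a circle of length one, and loose edges, which sit in $E_0(\Sigma)$ on both sides), and you have accounted for those explicitly. No gaps.
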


\begin{lem}\label{L:balptn}
For an edge set $S$, $\pib(\clos S) = \pib(\bcl S) = \pib(S)$ and\/ $V_0(\clos S) = V_0(\bcl S) = V_0(S)$.
\end{lem}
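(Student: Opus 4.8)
The plan is to prove both chains of equalities at once by showing that neither $\bcl$ nor $\clos$ disturbs the partition of $V$ into the vertex sets of balanced components together with the set $V_0$ of unbalanced-component vertices; once the component structure and the balanced/unbalanced status of each component are seen to be preserved, both $\pib(\cdot)$ and $V_0(\cdot)$ follow simultaneously. Since a loose edge lies in no component and affects neither $\pib$ nor $V_0$, and $\bcl$ and $\clos$ merely append $E_0(\Sigma)$, I will disregard loose edges throughout.

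First I would handle $\bcl$. The key observation is that balance-closure is \emph{local to the components of} $S$: if $e \in \bcl(S) \setm S$, there is a positive circle $C \subseteq S \cup e$ with $e \in C$, so $C \setm e$ is a path in $S$ joining the two endpoints of $e$, whence both endpoints lie in one component of $S$. Therefore no two components of $S$ are merged, the vertex sets of the components are unchanged, and on each such vertex set the subgraph cut out by $\bcl(S)$ is precisely $\bcl$ of the corresponding component of $S$. Now Lemma \ref{L:bcl} states that $\bcl$ of an edge set is balanced if and only if the set is balanced; applied component by component, this shows a component is balanced in $\bcl(S)$ exactly when it was balanced in $S$. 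Hence $\pib(\bcl S) = \pib(S)$ and $V_0(\bcl S) = V_0(S)$.

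Next I would treat $\clos$, using $\clos(S) = (E{:}V_0(S)) \cup \bigcup_{i=1}^k \bcl(S_i) \cup E_0(\Sigma)$, where $S_1,\dots,S_k$ are the balanced components of $S$. For each balanced $S_i$, the locality observation again puts every edge of $\bcl(S_i)$ inside $V(S_i)$, so $\bcl(S_i)$ is connected (it contains the spanning connected $S_i$) and is balanced by Lemma \ref{L:bcl}; moreover no edge of $\clos(S)$ joins $V(S_i)$ to any other vertex, since both $E{:}V_0(S)$ and the other $\bcl(S_j)$ avoid $V(S_i)$. Thus each $\bcl(S_i)$ is a balanced component of $\clos(S)$ with vertex set $V(S_i)$. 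On the unbalanced side, any unbalanced component $T$ of $S$ has $V(T) \subseteq V_0(S)$ and $E(T) \subseteq E{:}V_0(S) \subseteq \clos(S)$, so $T$ sits inside $\clos(S)$ and the component of $\clos(S)$ meeting $V(T)$ is unbalanced; hence every vertex of $V_0(S)$ lies in an unbalanced component of $\clos(S)$. These two facts force the balanced components of $\clos(S)$ to be exactly the $\bcl(S_i)$, giving $\pib(\clos S) = \{V(S_1),\dots,V(S_k)\} = \pib(S)$ and $V_0(\clos S) = V \setm \bigcup_i V(S_i) = V_0(S)$.

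The main obstacle is the $E{:}V_0(S)$ term in the closure: unlike balance-closure, it inserts \emph{all} ambient edges among the unbalanced-component vertices, which can fuse several unbalanced components and add many edges not originally present. The point to pin down is that this operation can only enlarge or merge unbalanced components (a supergraph of an unbalanced graph is unbalanced) and can neither manufacture a balanced component among the $V_0(S)$ vertices nor leak an edge into a balanced $V(S_i)$. The containment $E(T) \subseteq \clos(S)$ keeps the unbalanced blocks unbalanced, while the locality of $\bcl$ keeps the balanced blocks sealed off on their own vertex sets; together these are exactly what prevents $\clos$ from altering $\pib$ or $V_0$.
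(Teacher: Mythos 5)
The paper states Lemma \ref{L:balptn} without proof, so there is no argument of record to compare yours against; judged on its own, your proof is correct and is the natural way to fill the gap. The two pillars you supply are exactly what is needed: first, that every edge adjoined by $\bcl$ lies, via the path $C \setminus e$, inside a single component of $S$, so that components neither merge nor change vertex set and Lemma \ref{L:bcl} can be applied component by component; second, that the three constituents of $\clos(S)$ respect the partition $V = V_0(S) \cup \bigcup_i V(S_i)$, while each unbalanced component of $S$ survives as a subgraph inside $E{:}V_0(S)$, so that every component of $\clos(S)$ meeting $V_0(S)$ remains unbalanced even after fusions. One pedantic caveat: your locality claim is literally violated by positive loops, since a positive loop $e$ at any vertex $w$ is by itself a positive circle contained in $S_i \cup e$ and hence belongs to $\bcl(S_i)$ even when $w \notin V(S_i)$. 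This does not damage the argument --- such a loop is also contributed by whichever term of $\clos(S)$ covers $w$, and a positive loop never alters the vertex set or the balance of a component --- but a sentence dispatching positive loops alongside the loose edges you already set aside would make the locality step airtight.
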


\subsubsection{Abstract closure}\label{absclos}\

Let $E$ be any set; its \emph{power set} is the class $\cP(E)$ of all subsets of $E$.  A function $J: \cP(E) \to \cP(E)$ is an \emph{abstract closure operator on $E$} if it has the three properties
\begin{enumerate}
\item[(C1)] $J(S) \supseteq S$ for every $S \subseteq E$ (increase).
\item[(C2)] $R \subseteq S \implies J(R) \subseteq J(S)$ (isotonicity).
\item[(C3)] $J(J(S)) = J(S)$ (idempotence).
\end{enumerate}

\begin{thm}\label{T:closure}
The operator $\clos$ on subsets of\/ $E(\Sigma)$ is an abstract closure operator.
\end{thm}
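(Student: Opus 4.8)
The plan is to verify the three closure axioms (C1)--(C3) directly from the definition of $\clos$, using Lemmas \ref{L:bcl} and \ref{L:balptn} for the balance bookkeeping. Axiom (C1) is immediate: given $e \in S$, if $e$ is loose it lies in $E_0(\Sigma)$; if $e$ lies in a balanced component $S_i$ then $e \in S_i \subseteq \bcl(S_i)$; and if $e$ lies in an unbalanced component then its endpoint(s) lie in $V_0(S)$, so $e \in E{:}V_0(S)$. In each case $e \in \clos(S)$.

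For (C2) I would first record the monotonicity fact that $R \subseteq S$ forces $V_0(R) \subseteq V_0(S)$: a vertex in an unbalanced $R$-component sits in a negative circle or half edge whose edges already lie in $S$, and since $R$-components refine $S$-components, its $S$-component is unbalanced as well. Now take $e \in \clos(R)$. Loose edges lie in $E_0(\Sigma)$; if $e \in E{:}V_0(R)$ then $V(e) \subseteq V_0(R) \subseteq V_0(S)$, so $e \in E{:}V_0(S)$. Finally suppose $e \in \bcl(R_i)$ for a balanced component $R_i$ of $\Sigma|R$. If $e \in R_i$ then $e \in S$ and (C1) applies; otherwise a positive circle $C \subseteq R_i \cup e$ runs through $e$, forcing (for a link) both endpoints of $e$ into $V(R_i)$. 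Since $R_i \subseteq S$ is connected, $V(R_i)$ lies in one component $\Sigma'$ of $\Sigma|S$: if $\Sigma'=S_j$ is balanced then $C \subseteq S_j \cup e$ gives $e \in \bcl(S_j)$, while if $\Sigma'$ is unbalanced then $V(e) \subseteq V(R_i) \subseteq V_0(S)$ gives $e \in E{:}V_0(S)$. The residual case is a positive loop at a vertex $v$, which belongs to every $\bcl$; but $v$ lies in some component of $\Sigma|S$, and the same two subcases place the loop in $\bcl(S_j)$ or in $E{:}V_0(S)$. Hence $\clos(R) \subseteq \clos(S)$.

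Axiom (C3) is where the prior lemmas carry the argument. Writing $T := \clos(S)$, Lemma \ref{L:balptn} gives $V_0(T)=V_0(S)$ and $\pib(T)=\pib(S)$, so $E{:}V_0(T)=E{:}V_0(S)$ and the balanced components $T_j$ of $\Sigma|T$ have exactly the vertex sets $V(S_j)$. Each $S_j$ is a connected balanced subgraph of $T$ on $V(S_j)$, so $S_j \subseteq T_j$; conversely every edge of $T$ meeting $V(S_j)$ already lies in $\bcl(S_j)$, since the terms $E{:}V_0(S)$ and $\bcl(S_i)$ for $i\neq j$ contribute no edge with an endpoint in $V(S_j)$ apart from loose edges and positive loops (which lie in $\bcl(S_j)$ as well). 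Thus $S_j \subseteq T_j \subseteq \bcl(S_j)$. Balance-closure is isotone (immediate from its definition), and $\bcl(\bcl S_j)=\bcl(S_j)$ by Lemma \ref{L:bcl}, so this sandwich gives $\bcl(T_j)=\bcl(S_j)$. Reassembling the formula, $\clos(T)=E{:}V_0(S)\cup\bigcup_j \bcl(S_j)\cup E_0(\Sigma)=\clos(S)$; together with (C1) this yields $\clos(\clos S)=\clos(S)$.

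The main obstacle is isotonicity (C2): unlike (C3), which reduces cleanly to the precomputed balance lemmas via the sandwich $S_j\subseteq T_j\subseteq\bcl(S_j)$, it requires fresh case analysis tracking whether a balanced component $R_i$ of $\Sigma|R$ stays balanced, merges into a larger balanced component, or is swallowed into the unbalanced region of $\Sigma|S$. The edge types most easily mishandled are positive loops (which lie in $\bcl(T)$ for \emph{every} $T$, so must be caught by the ``$v$ lies in some $S$-component'' argument) and half edges and negative loops (which never enter the balance-closure of a balanced set, and so travel only through the $E{:}V_0$ term).
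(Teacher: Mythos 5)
Your proof is correct and follows essentially the same route as the paper: (C1) and (C2) are checked directly from the definition (the paper simply declares them clear), and idempotence is reduced via Lemmas \ref{L:balptn} and \ref{L:bcl} to the identity $\bcl\big((\clos S){:}B_j\big)=\bcl(S{:}B_j)$, which your sandwich $S_j\subseteq T_j\subseteq\bcl(S_j)$ makes explicit. The extra care you take with isotonicity and with positive loops and loose edges is detail the paper omits, not a different method.
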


\begin{proof}
The definition makes it clear that $\clos$ is increasing and isotonic.  What remains to be proved is that $\clos(\clos(S)) = \clos(S)$.  

Let $\pib(S) = \{B_1, \dots, B_k\}$; thus, $S{:}B_i$ is balanced.  By the definition of closure and Lemma \ref{L:balptn}, 
\begin{align*}
\clos(\clos S) &= \big(E{:}V_0(\clos S)\big) \cup \bigcup_{i=1}^k \bcl\big((\clos S){:}B_i\big) \cup E_0(\Sigma) \\
&= \big(E{:}V_0(S)\big) \cup \bigcup_{i=1}^k \bcl\big((\bcl S){:}B_i\big) \cup E_0(\Sigma) \\
&= \big(E{:}V_0(S)\big) \cup \bigcup_{i=1}^k \bcl(S{:}B_i) \cup E_0(\Sigma) 
= \clos S.
\qedhere
\end{align*}
\end{proof}

\subsubsection{Matroid closure}\label{matroidclos}\

The closure operator of a signed graph has an additional property, the exchange property, whose theory is the theory of matroids.  That is, $\clos_\Sigma$ is a matroid closure.  Matroids are too complicated to explain here; see \cite{Oxley}.  One aspect of matroid closure we do want is:

\begin{thm}\label{T:cctclosure}
For $S \subseteq E$, 
$$
\clos S = S \cup \{ e \notin S : \exists \text{ a frame circuit $C$ such that } e \in C \subseteq S \cup e \}.
$$
\end{thm}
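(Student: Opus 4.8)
The plan is to reduce the statement to \emph{Proposition \ref{P:ehandcuff}}, which already converts the existence of a frame circuit through an edge into the failure of that edge to be a partial balancing edge. Write $R$ for the right-hand side. Since $S \subseteq \clos S$ by the increase property (Theorem \ref{T:closure}) and $S \subseteq R$ trivially, it suffices to decide, for a fixed edge $e \notin S$, whether $e \in \clos S$, and to show this is equivalent to the existence of a frame circuit $C$ with $e \in C \subseteq S \cup e$. The bridge between the two sides is the auxiliary signed graph $\Sigma|(S\cup e)$: a frame circuit $C$ with $e \in C \subseteq S \cup e$ is exactly a frame circuit of $\Sigma|(S\cup e)$ through $e$, so by Proposition \ref{P:ehandcuff} (applied to $\Sigma|(S\cup e)$ in place of $\Sigma$) such a $C$ exists if and only if $e$ is not a partial balancing edge of $\Sigma|(S\cup e)$. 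Since $(\Sigma|(S\cup e))\setm e = \Sigma|S$, the latter condition says precisely that $b(S\cup e) \ge b(S)$, i.e. that adjoining $e$ destroys no balanced component.

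Thus the entire theorem comes down to one claim: \emph{for $e \notin S$, $e \in \clos S$ if and only if $e$ is not a partial balancing edge of $\Sigma|(S\cup e)$.} I would prove this by unwinding the definition $\clos S = (E{:}V_0(S)) \cup \bigcup_i \bcl(S_i) \cup E_0(\Sigma)$, where $S_1,\dots,S_k$ are the balanced components of $S$ with vertex sets $B_1,\dots,B_k = \pib(S)$, and comparing it case by case with the classification of partial balancing edges in Proposition \ref{P:baledge}, organized by where the endpoints of $e$ lie. A loose edge lies in $E_0(\Sigma)\subseteq\clos S$ and is never a partial balancing edge, so both conditions hold. A half edge or negative loop at $v$ lies in no positive circle, so $e\in\clos S$ exactly when $v\in V_0(S)$; by Proposition \ref{P:baledge}\eqref{P:balloop} it is a partial balancing edge exactly when the $v$-component of $\Sigma|S$ is balanced, i.e. when $v\notin V_0(S)$, so the two match. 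If $e$ is a link with both endpoints in $V_0(S)$, then $e\in E{:}V_0(S)\subseteq\clos S$, and $e$ either closes a cycle in, or is an isthmus between, unbalanced components, so $b(S\cup e)=b(S)$ and $e$ is not a partial balancing edge. If $e$ is a link with exactly one endpoint in $V_0(S)$, or with endpoints in two \emph{different} sets $B_i\ne B_j$, then $e\notin\clos S$ (a positive circle through $e$ would need both endpoints in a single $B_i$), and by Proposition \ref{P:baledge}\eqref{P:balisthmus} $e$ is then an isthmus joining components of which at least one is balanced, hence a partial balancing edge.

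The one genuinely substantive case is a link $e$ with both endpoints in the same balanced set $B_i$: letting $\tau$ be the common sign of all $vw$-paths in the balanced graph $S_i$, one has $e\in\bcl(S_i)\subseteq\clos S$ exactly when $\sigma(e)=\tau$ (so that $e$ completes a positive circle), whereas by Proposition \ref{P:baledge}\eqref{P:ballink} $e$ is a partial balancing edge exactly when every such path has sign $-\sigma(e)$, i.e. when $\sigma(e)=-\tau$; these are complementary, which is the heart of the matter. With the claim established, the theorem follows at once: for $e\notin S$, $e\in\clos S$ iff $e$ is not a partial balancing edge of $\Sigma|(S\cup e)$ iff, by Proposition \ref{P:ehandcuff}, there is a frame circuit $C$ with $e\in C\subseteq S\cup e$. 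The main obstacle is entirely inside this last case of the claim — verifying that the $\bcl$ condition ``$e$ closes a \emph{positive} circle'' is exactly the negation of the path-sign condition of Proposition \ref{P:baledge}\eqref{P:ballink}; the remaining cases are bookkeeping about which components the endpoints of $e$ fall into.
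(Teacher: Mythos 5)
Your proof is correct, and it rests on the same key lemma as the paper's proof---Proposition \ref{P:ehandcuff}---but it is organized around a different pivot. The paper argues each inclusion directly: for necessity it asks which piece of the definition of $\clos S$ the edge $e$ falls into (a $\bcl(S_i)$, where the positive circle is immediate from the definition of balance-closure, or $E{:}V_0(S)$, where the component $S'$ of $S\cup e$ containing $e$ is unbalanced and Proposition \ref{P:ehandcuff} produces the circuit); for sufficiency it splits on whether the given circuit $C$ is balanced (then $e\in\bcl S$ at once) or not (then Proposition \ref{P:ehandcuff} run backwards forces $V(S')\subseteq V_0(S)$). You instead isolate the intermediate equivalence ``$e\in\clos S$ iff $e$ is not a partial balancing edge of $\Sigma|(S\cup e)$, i.e.\ $b(S\cup e)=b(S)$,'' prove it by matching the definition of closure case by case against the classification in Proposition \ref{P:baledge}, and then apply Proposition \ref{P:ehandcuff} exactly once. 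What your route buys is that the intermediate statement is of independent interest (it is the rank characterization of closure, since $\rk(S\cup e)=\rk S$ iff $b(S\cup e)=b(S)$) and the frame-circuit translation becomes a one-line afterthought; what it costs is the longer endpoint-location case analysis, most of which the paper avoids because its $\bcl$ cases never need Proposition \ref{P:ehandcuff} at all. Your identification of the one substantive case---a link with both ends in a single balanced part $B_i$, where the $\bcl$ condition $\sigma(e)=\tau$ and the path-sign condition of Proposition \ref{P:baledge}\eqref{P:ballink} are exact complements---is right. The only loose end, which the paper's proof shares, is the positive loop: it falls outside your link / half-edge-or-negative-loop / loose-edge trichotomy, but it is by itself a positive circle, hence always a frame circuit contained in $S\cup e$ and always in $\clos S$, so both sides of the equivalence hold for it trivially.
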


\begin{proof}
Both parts of the proof depend heavily on Proposition \ref{P:ehandcuff}.  
We treat a half edge as if it were a negative loop, and for simplicity we neglect loose edges.
\medskip

\noindent\emph{Necessity.}
We want to prove that if $e \in \clos S$, a frame circuit $C$ exists.  Let $S'$ be the component of $S \cup \{e\}$ that contains $e$.

If $S'$ is contained in one of the sets $\bcl S_i$, then $C$ exists by the definition of balance-closure.

Assume $S' \subseteq E{:}V_0(S)$.  Then $S' \setm e$ consists of one or two components of $S{:}V_0(S)$.  Every such component is unbalanced, so $S'$ is unbalanced and $e$ is not a partial balancing edge of it.  By Proposition \ref{P:ehandcuff}, $e$ is contained in a frame circuit $C \subseteq S \cup \{e\}$.
\medskip

\noindent\emph{Sufficiency.}
Assuming a circuit $C$ exists, we want to prove that $e \in \clos S$.  

If $C$ is balanced, $e \in \bcl S \subseteq \clos S$.

If $C$ is unbalanced, the component $S'$ of $S \cup \{e\}$ that contains $e$ is unbalanced.  By Proposition \ref{P:ehandcuff}, $e$ is not a partial balancing edge of $S \cup \{e\}$; therefore $S' \setm e$ has only unbalanced components.  It follows that $V(S') \subseteq V_0(S)$, so $e \in C \subseteq E{:}V_0(S) \subseteq \clos S$.
\end{proof}

A consequence is that $\Lat\Sigma$ is a geometric lattice; but that is too much matroid theory for here.

\subsection{Oriented signed graphs = bidirected graphs}\label{bdg}\

Bidirected graphs were introduced by Edmonds and first published in a paper on matching theory, Edmonds and Johnson (1970a).  Later, Zaslavsky (1991b) found that they are oriented signed graphs.

A \emph{bidirected graph} is a graph in which each end of each edge has an independent direction.  Thus, an oriented signed graph is a bidirected graph.  
Formally, a bidirected graph $\Beta$ (read `Beta') is a pair $(\Gamma,\tau)$ where $\Gamma$ is a graph and $\tau$ is a function from edge ends to $\{+,-\}$.  If $e$ has endpoints $v,w$ and we write $(v,e)$ for the end of edge $e$ at vertex $v$, then $\tau(v,e) = +$ if the end is directed towards $v$ and $= -$ if the end is directed away from $v$.  (The two directions on $e$ agree when $\tau(v,e) = -\tau(w,e)$, which may at first sight seem peculiar.)

An orientation of an ordinary graph gives a direction to each edge.  An \emph{orientation of a signed graph} gives a direction to each end of each edge.  If $e$ is positive, the directions at the two ends of $e$ must agree in pointing from one endpoint to the other.  If $e$ is negative, the directions at the two ends of $e$ must disagree; that is, they both point towards the middle of $e$ (an \emph{introverted} edge) or both away from the middle (an \emph{extraverted} edge).

A bidirected graph has an edge signature: 
$$\sigma_\Beta(e) = -\tau(v,e)\tau(w,e).$$
That is, if the directions at the two ends agree, the edge is positive; if they disagree, the edge is negative.  Thus, bidirected graphs and oriented signed graphs are exactly the same thing; only the name is different.  

Here is an example in which all negative edges are extraverted: 
\hfill\raisebox{-1.5cm}{\includegraphics[scale=.7]{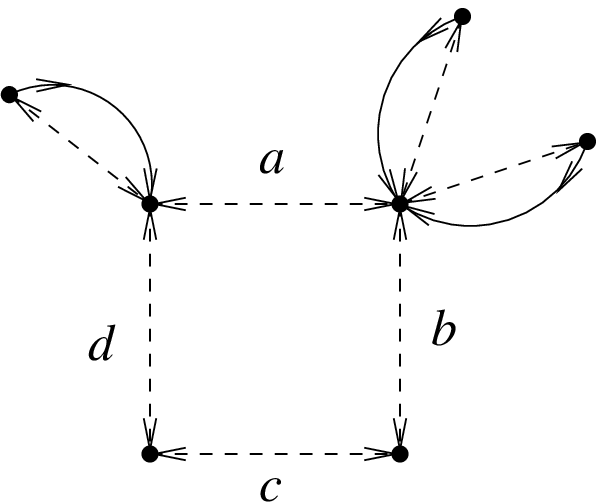}}
\medskip

We write $|\Beta|$ for the underlying graph of $\Beta$ and $\Sigma_\Beta$ for the signed graph $(|\Beta|,\sigma_\Beta)$.
$\Beta$ is switched by the rule $\Beta^\zeta := (|\Beta|,\tau^\zeta)$ where 
$$\tau^\zeta(v,e) := \tau(v,e)\zeta(v).$$

\begin{lem}\label{L:swbd}
$\Sigma_\Beta$ and $\Sigma_{\Beta^\zeta}$ are switching equivalent; in fact, $\Sigma_{\Beta^\zeta} = (\Sigma_\Beta)^\zeta$.
\end{lem}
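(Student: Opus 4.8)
The plan is to prove the stronger ``in fact'' statement $\Sigma_{\Beta^\zeta} = (\Sigma_\Beta)^\zeta$ by a direct computation of signatures; switching equivalence is then an immediate consequence, since $(\Sigma_\Beta)^\zeta$ is by definition a switching of $\Sigma_\Beta$ and hence $\Sigma_{\Beta^\zeta} \sim \Sigma_\Beta$. First I would observe that both signed graphs in question have the same underlying graph, namely $|\Beta|$: reversing edge-end directions via $\tau \mapsto \tau^\zeta$ changes no incidences, and switching a signature likewise leaves $|\Sigma_\Beta|$ alone. So it suffices to verify that the two signatures agree on every ordinary edge.

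The key step is the pointwise calculation. Fix an ordinary edge $e$ with endpoints $v,w$. Unwinding the defining formula for the signature of the bidirected graph $\Beta^\zeta$ and substituting $\tau^\zeta(v,e) = \tau(v,e)\zeta(v)$ gives
\[
\sigma_{\Beta^\zeta}(e) = -\tau^\zeta(v,e)\,\tau^\zeta(w,e) = -\big(\tau(v,e)\zeta(v)\big)\big(\tau(w,e)\zeta(w)\big).
\]
Because the sign group is commutative, I can regroup the four factors so that the two $\tau$-values reconstitute $\sigma_\Beta(e)$ and the two $\zeta$-values flank it, yielding $\zeta(v)\,\sigma_\Beta(e)\,\zeta(w)$, which is exactly the switched signature $\sigma_\Beta^\zeta(e)$ defined in Section \ref{sw}. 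This establishes the claimed equality edge by edge.

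The only points demanding care, rather than a genuine obstacle, are the degenerate edge types. For a loop at $v$ both ends sit at $v$, so the substitution introduces a factor $\zeta(v)^2 = +$ and leaves the loop's sign unchanged---consistent with the general formula, in which $\zeta(v)\sigma_\Beta(e)\zeta(v) = \sigma_\Beta(e)$. Half edges carry no sign at all, so they contribute nothing to either signature and may be ignored. Thus the computation above covers every relevant edge, the identity $\Sigma_{\Beta^\zeta} = (\Sigma_\Beta)^\zeta$ holds, and switching equivalence follows at once. The proof has no deep content; its substance is entirely the commutativity of sign multiplication.
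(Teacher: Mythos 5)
Your proposal is correct and is essentially the paper's own proof: both verify the identity $\sigma_{\Beta^\zeta}(e) = \zeta(v)\sigma_\Beta(e)\zeta(w) = \sigma_\Beta^\zeta(e)$ by substituting $\tau^\zeta(v,e) = \tau(v,e)\zeta(v)$ into the definition $\sigma(e) = -\tau(v,e)\tau(w,e)$ and regrouping the four sign factors. The remarks on loops and half edges are harmless additional care but do not change the argument.
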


\begin{proof}
Let $e$ have endpoints $v,w$.  Then 
\begin{align*}
\sigma_\Beta^\zeta(e) &= \zeta(v)\sigma(e)\zeta(w) = \zeta(v)[-\tau(v,e)\tau(w,e)]\zeta(w) \\
&= -[\tau(v,e)\zeta(v)][\tau(w,e)\zeta(w)] = \sigma_{\Beta^\zeta}(e).
\qedhere
\end{align*}
\end{proof}

In contrast to switching of a signed graph, $\Beta^\zeta \neq \Beta^{-\zeta}$; indeed, $\tau^{-\zeta} = -\tau^\zeta$.

\subsubsection*{Acyclic orientations}\

In an orientation $\tau$ of a signed graph, a vertex $v$ is a \emph{source} if $\tau(v,e) = +$ for every edge end $(v,e)$ at $v$; in other words, all edges incident with $v$ point into it.  It is a \emph{sink} if $\tau(v,e) = -$ for every edge end at $v$; i.e., all edges at $v$ point away from it.  An orientation $\tau$ of $\Sigma$ is called \emph{acyclic} if for every frame circuit $C\subseteq\Sigma$, the oriented subgraph $(\Sigma|C,\tau|_C)$ has a source or a sink, where $\tau|_C$ denotes the restriction to the edge ends $(v,e)$ in $\Sigma|C$.

\subsubsection*{Digraphs}\

The two arrows that point in a consistent direction along a positive edge can be just as well represented by a single arrow.  Thus, an oriented all-positive signed graph---or we may call it an all-positive bidirected graph---is a directed graph; only the name has changed.  It follows that digraphs fall under the topic of bidirected graphs.  I have an impression that much of digraph theory does not readily generalize to bidirected graphs, but it may be that it has merely not yet been tried.

\section{Geometry and Matrices}\label{geom}

In this section we write the vertex set as $V = \{v_1,v_2,\ldots,v_n\}$.  $\F$ denotes any field.  The most important field here will be $\bbR$, the real number field.  

\subsection{Vectors for edges}\label{vectors}\

We have a signed graph $\Sigma$ of order $n$.  For each edge $e$ there is a vector $\bx(e) \in \F^n$, whose definition is, for the four types of edge: 
$$
\begin{matrix}
\begin{matrix}
  \begin{array}{rl}
  \begin{matrix} \\ \\ \\[2pt] i  \\ \\ \\ \\[6pt] j  \\  \\ \\ \\ \end{matrix} 
  & 
  \begin{bmatrix} 0 \\ \vdots \\ 0 \\ \pm 1 \\ 0 \\ \vdots \\ 0 \\ \mp \sigma(e) \\ 0  \\ \vdots \\ 0 \\ \end{bmatrix}
  \end{array} \\ 
\end{matrix}
&& 
\begin{matrix}
  \begin{array}{rl}
  \begin{bmatrix} 0 \\ \vdots \\ 0 \\ \pm 1 \\ 0 \\ \vdots \\ 0 \\ \mp1 \\ 0  \\ \vdots \\ 0 \\ \end{bmatrix}
  \end{array} \\ 
\end{matrix}
&& 
\begin{matrix}
  \begin{array}{rl}
  \begin{bmatrix} 0 \\ \vdots \\ 0 \\ \pm 1 \\ 0 \\ \vdots \\ 0 \\ \pm1 \\ 0  \\ \vdots \\ 0 \\ \end{bmatrix}
  \end{array} \\ 
\end{matrix}
&
& 
\begin{matrix}
  \begin{matrix}
   &   \\ \\ 
   \begin{matrix} \\ \\ \\ i  \\ \\ \\ \\  \end{matrix} 
  & 
  \begin{bmatrix}
  0 \\ \vdots \\ 0 \\ \pm (1 - \sigma(e)) \\ 0 \\ \vdots \\ 0 \\
  \end{bmatrix}     \\\\\\\\
  \end{matrix} \\ 
\end{matrix}
&
& 
\begin{matrix}
  \begin{matrix}
   &   \\ 
   \begin{matrix} \\ \\ \\ i  \\ \\ \\ \\ \end{matrix} 
  & 
  \begin{bmatrix} 0 \\ \vdots \\ 0 \\ \pm 1 \\ 0 \\ \vdots \\ 0 \\ \end{bmatrix} \\ \\ \\
  \end{matrix} \\ 
\end{matrix}
&
\begin{matrix} \\
  \begin{bmatrix} 0 \\ \vdots \\ 0 \\ \vdots \\ 0 \\ \end{bmatrix} \\ \\ \\ 
\end{matrix}
\\
\quad\text{link }e{:}v_iv_j,
&&
+\text{ link, }
&&
-\text{ link, }
&&
\quad \text{loop }e{:}v_iv_i,
&&
\ \ \text{half edge }e{:}v_i,
&
\ \text{loose edge.} 
\end{matrix}
$$
Note that the vector of a positive loop is $\bf0$.  
These vectors are well defined only up to sign, i.e., the negative of $\bx(e)$ is another possible choice of $\bx(e)$.  We make an arbitrary choice $\bx(e)$ for each edge $e$, which does not affect the linear dependence properties.  

For a set $S \subseteq E$, define $\bx(S) := \{ \bx(e) : e \in S \}$.

\begin{thm}\label{T:dependence} 
Let $S$ be an edge set in\/ $\Sigma$ and consider the corresponding vector set\/ $\bx(S)$ in the vector space\/ $\F^n$ over a field\/ $\F$.
\begin{enumerate}[{\rm(a)}]
\item When $\Char\F \neq 2$, $\bx(S)$ is linearly dependent if and only if $S$ contains a frame circuit.
\item When $\Char\F = 2$, $\bx(S)$ is linearly dependent if and only if $S$ contains a circle or a loose edge. 
\end{enumerate}
\end{thm}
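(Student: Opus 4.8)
The plan is to handle the two characteristics separately and, in each, to reduce to connected edge sets before analyzing them according to balance. Two reductions apply throughout. First, switching by $\zeta$ replaces each vector $\bx(e)$ by $\pm D_\zeta\bx(e)$, where $D_\zeta=\operatorname{diag}(\zeta(v_1),\dots,\zeta(v_n))$ is an invertible $\pm1$-diagonal matrix (a one-line check against the vector definition, using that the $\bx(e)$ are defined only up to sign); hence switching preserves linear (in)dependence, and by Corollary \ref{C:swbalance} I may switch any balanced subgraph to be all-positive at will. Second, the vectors of edges in distinct components of $(V,S)$ are supported on disjoint coordinate blocks, so $\bx(S)$ is dependent iff some component is, and frame circuits live inside single components; thus it suffices to prove both statements for a connected $S$ spanning a vertex set $V'$. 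Loose edges and positive loops (and, in characteristic $2$, negative loops) are the zero vector, so any set containing one is dependent and contains a trivial frame circuit, consistent with (a) and (b).

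For part (a) I would prove both implications for connected $S$. \emph{Sufficiency}: if $S$ contains a positive circle, switching it all-positive turns its incidence vectors into $\mathbf e_v-\mathbf e_w$ around the circle, which sum to $\mathbf 0$; if $S$ contains a tight or loose handcuff $C$, a direct count (each negative circle has as many edges as vertices, a connecting path adds one more edge than internal vertex) shows $C$ has exactly one more edge than it has vertices, so its $|C|$ vectors, all supported on those vertices, outnumber the dimension they span and are dependent. \emph{Necessity} (contrapositive): a connected frame-circuit-free $S$ is either a spanning tree of $V'$ when balanced, or else a spanning tree plus exactly one extra feature---a link completing a negative circle, a half edge, or a negative loop---when unbalanced; here Proposition \ref{P:handcuffs} excludes vertex-disjoint negative circles, the tight-handcuff convention (a half edge or negative loop counts as a length-one negative circle) excludes a second unbalanced feature, and the theta-graph sign identity excludes a positive circle. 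In the balanced case the tree's vectors are independent. In the unbalanced case, after switching the tree all-positive its $|V'|-1$ vectors span the hyperplane $\{x:\sum_v x_v=0\}$, while the extra edge contributes $\pm\mathbf e_v$, $\pm2\mathbf e_v$, or $\pm(\mathbf e_v+\mathbf e_w)$ respectively, each having nonzero coordinate sum (using $\Char\F\neq2$ for the middle case) and so lying outside that hyperplane; this gives full rank $|V'|$ and independence.

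For part (b) I would observe that in characteristic $2$ every sign collapses, since $-1=1$: each link becomes $\mathbf e_i+\mathbf e_j$, each loop and loose edge becomes $\mathbf 0$ (negative loops included, as $2=0$), and each half edge becomes $\mathbf e_i$. Thus $\bx$ is the ordinary $\bbF_2$ incidence map of $|\Sigma|$, whose linear dependence is governed by the binary cycle space; a set is dependent exactly when it contains a zero vector (a loop or a loose edge) or a circle, which is the asserted condition. The point is that the handcuff phenomenon of part (a) disappears, because over $\bbF_2$ a negative circle now also sums to $\mathbf 0$.

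The main obstacle I expect is the necessity half of part (a): establishing that a connected frame-circuit-free set has the ``tree plus one unbalanced edge'' structure, which requires combining the earlier handcuff and theta-graph facts carefully while treating half edges and negative loops as length-one negative circles, and then verifying uniformly across the three sub-types that the extra vector escapes the zero-sum hyperplane. Everything else is either a direct computation or an immediate consequence of results already established.
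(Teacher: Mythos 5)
The paper itself gives no proof of Theorem \ref{T:dependence}; it defers entirely to Zaslavsky (1982a), Theorem 8B.1. So you are supplying an argument where the text supplies none, and for part (a) yours is essentially the standard one and it is sound. The two reductions (switching acts, up to the sign ambiguity of $\bx$, as an invertible $\pm1$-diagonal matrix; distinct components have disjoint coordinate supports) are correct; the sufficiency computations (telescoping sum around a positive circle; a handcuff has exactly one more edge than it has vertices, so its vectors outnumber the dimension of their common support) are correct; and you have correctly isolated the real content in the structural lemma that a connected frame-circuit-free set is a tree or a tree plus one unbalanced edge. That lemma does need the argument you only gesture at: two ``negative circles'' (half edges and negative loops included) that are disjoint or meet in one vertex yield a handcuff by connectedness, while two meeting in at least two vertices force a theta subgraph, whose three circle signs multiply to $+$ so one of them is positive. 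Two small slips in the final step: your ``respectively'' misaligns the three extra-edge types with their vectors (the link gives $\pm(\mathbf{e}_v+\mathbf{e}_w)$, the half edge $\pm\mathbf{e}_v$, the negative loop $\pm2\mathbf{e}_v$), and $\Char\F\neq2$ is needed for the link and the negative loop, not only one ``middle'' case; the hypothesis is invoked, so this is bookkeeping only.

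Part (b) is where the one-line appeal to ``the binary cycle space'' fails in the stated generality. In characteristic $2$ a half edge at $v_i$ contributes $\mathbf{e}_i$, and the classical fact that the minimal dependencies of a mod-$2$ incidence matrix are exactly the circles is a statement about ordinary graphs. Take half edges at $v_1$ and $v_2$ together with a link $v_1v_2$: the vectors $\mathbf{e}_1$, $\mathbf{e}_1+\mathbf{e}_2$, $\mathbf{e}_2$ are dependent in characteristic $2$, yet the edge set contains no circle and no loose edge. (In part (a) this same set is a loose handcuff, so your characteristic-$\neq2$ argument handles it; in characteristic $2$ it is a dependency not of the asserted form.) This is arguably a defect of the theorem's statement as transcribed rather than of your plan, but as written your proof of (b) asserts something false for graphs with half edges; you should either restrict (b) to ordinary graphs or enlarge the list of dependent configurations to include paths joining two half edges.
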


The theorem is a restatement of Zaslavsky (1982a), Theorem 8B.1. 
The proof, which we omit, is neither very short nor very long.  

\begin{cor}\label{C:mindep}
If $\Char\F \neq 2$, the minimal linearly dependent subsets of\/ $\bx(E)$ are the sets $\bx(C)$ where $C$ is a frame circuit in $\Sigma$.
\end{cor}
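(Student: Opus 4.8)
The plan is to derive Corollary \ref{C:mindep} directly from Theorem \ref{T:dependence}(a), which characterizes exactly when $\bx(S)$ is linearly dependent over a field of characteristic $\neq 2$: namely, precisely when $S$ contains a frame circuit. The corollary claims that the \emph{minimal} linearly dependent subsets of $\bx(E)$ are exactly the sets $\bx(C)$ for $C$ a frame circuit. So there are two inclusions to verify, and both should follow from Theorem \ref{T:dependence}(a) together with a small amount of care about the correspondence between edge sets $S$ and vector sets $\bx(S)$.

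First I would show that each $\bx(C)$ with $C$ a frame circuit is minimally dependent. Since $C$ is itself a frame circuit, $C$ trivially contains a frame circuit, so by Theorem \ref{T:dependence}(a) the set $\bx(C)$ is linearly dependent. For minimality I must check that every proper subset $\bx(C')$, with $C' \subsetneq C$, is linearly independent; by Theorem \ref{T:dependence}(a) this is equivalent to showing that no proper subset $C'$ of a frame circuit contains a frame circuit. This is the one genuine combinatorial point, and it is where I expect the slight subtlety to lie: I would verify it by inspecting the three types of frame circuit. A positive circle (or loose edge) contains no smaller circle or handcuff, so removing any edge destroys the only frame circuit present. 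For a tight or loose handcuff, deleting any edge from one of the two negative circles leaves that circle broken, so what remains is a balanced theta-like configuration or a path with at most one negative circle, containing no positive circle and no pair of negative circles, hence no frame circuit; deleting an edge of the connecting path of a loose handcuff disconnects the two negative circles so they no longer form a handcuff. Thus no proper subset of a frame circuit is itself frame-dependent.

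Conversely I would show that every minimal linearly dependent subset $\bx(S)$ arises as $\bx(C)$ for some frame circuit $C$. If $\bx(S)$ is dependent, then by Theorem \ref{T:dependence}(a) the edge set $S$ contains a frame circuit $C \subseteq S$. Then $\bx(C)$ is dependent (again by the theorem, since $C$ contains itself), so $\bx(C)$ is a dependent subset of $\bx(S)$; by minimality of $\bx(S)$ we must have $\bx(C) = \bx(S)$, and since the map $e \mapsto \bx(e)$ is injective on edges with distinct supports, $C = S$ as edge sets (loops with vector $\mathbf{0}$ and loose edges require a word of care, but a positive loop has vector $\mathbf{0}$ and so can never lie in a minimal dependent set, while a loose edge with vector $\mathbf{0}$ is itself a frame circuit). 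Hence $S$ is a frame circuit.

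The main obstacle, such as it is, is bookkeeping rather than depth: one must confirm the claim that no proper subset of a frame circuit contains a frame circuit, handled by the case analysis above, and one must be slightly careful about the degenerate edges whose vectors are $\mathbf{0}$ (positive loops) so that the matroid-theoretic notion of ``minimal dependent set'' lines up cleanly with ``frame circuit.'' Since a positive loop has the zero vector, $\{\bx(e)\}$ for a positive loop $e$ would be a one-element dependent set; but a positive loop is not a frame circuit, so I would note that Theorem \ref{T:dependence}(a) already excludes this by treating such an $e$ as belonging to no minimal frame circuit. Once these edge cases are dispatched, the corollary is an immediate consequence of the characterization in Theorem \ref{T:dependence}(a).
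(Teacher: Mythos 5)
Your derivation is correct and is exactly the intended one: the paper states the corollary without proof as an immediate consequence of Theorem \ref{T:dependence}(a), the only substantive point being the observation (which you verify by cases) that no proper subset of a frame circuit contains a frame circuit. One correction to your side remark about degenerate edges: a positive loop \emph{is} a frame circuit under the paper's definitions, since a loop is a circle of length~$1$ (a connected $2$-regular graph) and a positive loop is therefore a positive circle; so $\{\bx(e)\}=\{\mathbf 0\}$ for a positive loop is a minimal dependent set that \emph{does} correspond to a frame circuit, and no exclusion is needed --- indeed your proposed reading, under which Theorem \ref{T:dependence}(a) would declare $\{\mathbf 0\}$ independent, would make that theorem false rather than rescue the corollary.
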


Define $S \subseteq E(\Sigma)$ to be \emph{independent} if the vectors in $\bx(S)$ are linearly independent (and distinct from each other) over a field whose characteristic is not $2$.  

\begin{cor}\label{C:sgindep}
A set $S \subseteq E(\Sigma)$ is independent if and only if it does not contain a frame circuit.
\end{cor}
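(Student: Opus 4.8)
The plan is to obtain the corollary as an immediate contrapositive of Theorem \ref{T:dependence}(a), once the definition of independence is matched up with the notion of linear dependence used there. The real mathematical content already resides in Theorem \ref{T:dependence}(a) and Corollary \ref{C:mindep}, which are cited; the corollary is essentially a repackaging of them.

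First I would observe that the phrase ``linearly independent (and distinct from each other)'' in the definition of independence is exactly the negation of ``$\bx(S)$ is linearly dependent'' when the latter is read for the family $(\bx(e))_{e\in S}$ rather than merely for the underlying set. A family of vectors is linearly independent precisely when its members are nonzero, pairwise distinct (up to the sign ambiguity in the choice of each $\bx(e)$), and linearly independent as a set; a repeated vector, a pair of parallel vectors, or a zero vector each makes the family dependent. Thus $S$ is independent in the sense just defined if and only if the family $\bx(S)$ fails to be linearly dependent.

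Next I would apply Theorem \ref{T:dependence}(a): over a field with $\Char\F \neq 2$, $\bx(S)$ is linearly dependent if and only if $S$ contains a frame circuit. Taking the contrapositive, $\bx(S)$ is linearly independent exactly when $S$ contains no frame circuit. Combined with the previous paragraph, this yields ``$S$ is independent $\iff$ $S$ contains no frame circuit,'' which is the assertion.

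The only point requiring care---and where the degenerate cases hide---is the reconciliation in the first step, so I would verify that the two ways a family can fail independence beyond a genuine linear relation are themselves accounted for by frame circuits. Two parallel edges whose vectors coincide, or are negatives of one another, form a positive digon (two parallel positive links, or two parallel negative links, whose circle sign is $+$), which is a positive circle and hence a frame circuit; and the zero vector $\mathbf{0}$ arises exactly from a positive loop, again a positive circle and a frame circuit. This confirms that the distinctness and nonzero clauses in the definition are consistent with, and subsumed by, the frame-circuit characterization, so the one-line contrapositive argument is legitimate. I expect no substantive obstacle here beyond making this bookkeeping explicit.
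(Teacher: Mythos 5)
Your proposal is correct and matches the paper's (implicit) argument: the corollary is stated without proof precisely because it is the contrapositive of Theorem \ref{T:dependence}(a), with the ``distinct from each other'' clause absorbed by noting that repeated or parallel vectors come from positive digons and the zero vector from a positive loop or loose edge, all of which are frame circuits. Your careful bookkeeping of those degenerate cases is exactly the right reconciliation and introduces nothing beyond what the paper intends.
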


Independent sets are the signed-graphic generalization of forests.  Corollary \ref{C:sgindep} implies that a set in a balanced signed graph is independent if and only if it is a forest.  Zaslavsky (1982a), Theorem 5.1(c) has a direct description of independent sets in general.

The vector subspace generated by a set $X \subseteq \F^n$ is denoted by $\langle X \rangle$.  We write
$$
\cL_\F(\Sigma) := \{ \langle X \rangle : X \subseteq \bx(E) \}.
$$
When partially ordered by set inclusion, $\cL_\F(\Sigma)$ is a lattice. 

\begin{cor}\label{C:span}
Assume that $\Char\F \neq 2$.  For $S \subseteq E(\Sigma)$, $\bx(E) \cap \langle \bx(S) \rangle = \bx(\clos S)$.  Thus, 
\begin{enumerate}[{\rm(a)}]
\item $\cL_\F(\Sigma) \cong \Lat \Sigma$, and  
\item $S$ is independent if and only if it is not in the closure of any proper subset of itself.  
\end{enumerate}
\end{cor}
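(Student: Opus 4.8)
The plan is to prove the displayed equation $\bx(E)\cap\langle\bx(S)\rangle=\bx(\clos S)$ first, since both numbered consequences follow from it by routine lattice- and matroid-theoretic bookkeeping. I would recast the equation in the edge-wise form $e\in\clos S\iff\bx(e)\in\langle\bx(S)\rangle$, which is equivalent because $\bx(E)\cap\langle\bx(S)\rangle=\{\bx(e):e\in E,\ \bx(e)\in\langle\bx(S)\rangle\}$ and $\clos S\subseteq E$. The degenerate case $\bx(e)=\mathbf0$ (a positive loop or a loose edge, the only edges with zero vector) I would dispose of immediately: such an $e$ is itself a frame circuit, so $e\in\clos S$ by Theorem \ref{T:cctclosure}, while $\mathbf0\in\langle\bx(S)\rangle$ always, so both sides agree.

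For the forward direction, if $e\in\clos S$ then by Theorem \ref{T:cctclosure} either $e\in S$, whence $\bx(e)\in\langle\bx(S)\rangle$ trivially, or there is a frame circuit $C$ with $e\in C\subseteq S\cup e$; in the latter case Corollary \ref{C:mindep} makes $\bx(C)$ a minimal dependent family, so $\bx(e)$ is a linear combination of $\{\bx(f):f\in C\setm e\}\subseteq\bx(S)$ and thus $\bx(e)\in\langle\bx(S)\rangle$. For the converse, suppose $\bx(e)\in\langle\bx(S)\rangle$ with $\bx(e)\neq\mathbf0$; since $e\in S$ already gives $e\in\clos S$ by increase (C1), I may assume $e\notin S$. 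Choosing $T\subseteq S$ minimal with $\bx(e)\in\langle\bx(T)\rangle$, minimality forces every coefficient to be nonzero and hence $\bx(T)$ to be linearly independent, so by Corollary \ref{C:sgindep} the set $T$ contains no frame circuit. But $\bx(T\cup e)$ is dependent, so by Theorem \ref{T:dependence} the set $T\cup e$ contains a frame circuit $C$, which must therefore contain $e$; as $C\subseteq T\cup e\subseteq S\cup e$, Theorem \ref{T:cctclosure} yields $e\in\clos S$. This minimal-subset argument, together with the bookkeeping for zero vectors and for parallel edges sharing a vector, is where I expect the only real friction; everything substantive has already been delivered by Corollary \ref{C:mindep} and Theorem \ref{T:cctclosure}.

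For part (a) I would define $\Phi:\Lat\Sigma\to\cL_\F(\Sigma)$ by $\Phi(S):=\langle\bx(S)\rangle$ and $\Psi:\cL_\F(\Sigma)\to\Lat\Sigma$ by $\Psi(U):=\{e\in E:\bx(e)\in U\}$. Writing an arbitrary $U\in\cL_\F(\Sigma)$ as $\langle\bx(S)\rangle$, the edge-wise equation gives $\Psi(U)=\clos S$, which is closed by idempotence (Theorem \ref{T:closure}), so $\Psi$ indeed lands in $\Lat\Sigma$; the same equation yields $\bx(\clos S)\subseteq\langle\bx(S)\rangle$ and hence $\langle\bx(\clos S)\rangle=\langle\bx(S)\rangle$, so $\Phi\circ\Psi=\mathrm{id}$, while $\Psi\circ\Phi(S)=\clos S=S$ for closed $S$ shows $\Psi\circ\Phi=\mathrm{id}$. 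Both maps are visibly isotone, so they are mutually inverse order isomorphisms and $\cL_\F(\Sigma)\cong\Lat\Sigma$.

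Finally, for part (b) I would argue through frame circuits. If $S$ is dependent then by Corollary \ref{C:sgindep} it contains a frame circuit $C$; picking $e\in C$ and setting $T:=S\setm e$, Theorem \ref{T:cctclosure} gives $e\in\clos T$, so that $S=T\cup e\subseteq\clos T$ exhibits $S$ inside the closure of a proper subset. Conversely, if $S\subseteq\clos T$ for some $T\subsetneq S$, choose $e\in S\setm T$; then $e\in\clos T$, so by Theorem \ref{T:cctclosure} a frame circuit $C$ satisfies $e\in C\subseteq T\cup e\subseteq S$, and Corollary \ref{C:sgindep} makes $S$ dependent. This is precisely the equivalence of independence with not lying in the closure of any proper subset.
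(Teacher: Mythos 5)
Your proof is correct and follows exactly the route the paper intends but leaves implicit: the corollary is stated without proof, positioned to be read off from Theorem \ref{T:dependence}, Corollaries \ref{C:mindep} and \ref{C:sgindep}, and Theorem \ref{T:cctclosure}, which is precisely the machinery you invoke. The edgewise reformulation, the minimal-spanning-subset argument for the converse inclusion, and the order-isomorphism bookkeeping for (a) are all sound, and you correctly flag the only genuine delicacies (zero vectors and coincident vectors of parallel edges), which the paper itself glosses over.
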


The \emph{rank} of $S \subseteq E$ is defined to be
$$
\rk S := n - b(S).
$$
The rank of $\Sigma$ is $\rk\Sigma := \rk E = n - b(\Sigma)$.

\begin{thm}\label{T:rank}
Assume that $\Char\F \neq 2$.  Then $\dim \langle \bx(S) \rangle = \rk S$ for $S \subseteq E(\Sigma)$.
\end{thm}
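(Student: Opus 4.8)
The plan is to identify $\dim\langle\bx(S)\rangle$ with the cardinality of a maximal frame-circuit-free subset of $S$, and then to count such a subset component by component. By elementary linear algebra, $\dim\langle\bx(S)\rangle$ equals the size of any maximal linearly independent subset of $\bx(S)$. By Corollary~\ref{C:sgindep} (which rests on Theorem~\ref{T:dependence}, where $\Char\F\neq2$ is used), a subset $B\subseteq S$ has $\bx(B)$ independent exactly when $B$ contains no frame circuit, and such a $B$ is maximal in $S$ precisely when adjoining any $e\in S\setm B$ creates a frame circuit through $e$. So it suffices to exhibit one frame-circuit-free $B\subseteq S$ that is maximal and satisfies $|B|=n-b(S)$.

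First I would construct $B$ by working inside each component of $\Sigma|S=(V,S)$ separately; this is legitimate because every frame circuit is connected and hence lives inside a single component. For a balanced component on $n_i$ vertices, put a spanning tree into $B$, contributing $n_i-1$ edges. For an unbalanced component on $n_i$ vertices, choose a maximal forest $T_i$ and switch so that $T_i$ is all positive; since the component is unbalanced, the balance-detection argument of Section~\ref{balalg} (equivalently Proposition~\ref{P:fundsystem}) yields a non-tree edge $f$ that is either a half edge, a negative loop, or an ordinary edge whose fundamental circle $C_f$ is negative. Adjoin this $f$ to $T_i$, contributing $n_i$ edges. Summing over all components and using $\sum_i n_i=n$, together with the fact that the balanced components number exactly $b(S)$, gives $|B|=\sum_i n_i-b(S)=n-b(S)$.

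Next I would verify that $B$ is frame-circuit-free and maximal. Frame-circuit-freeness is clear inside balanced components (a spanning tree has no circle) and inside unbalanced components: $T_i\cup\{f\}$ contains at most the single negative object $C_f$ (or the lone unbalanced edge $f$), and a lone negative circle, half edge, or negative loop is by definition \emph{not} a frame circuit, whereas a handcuff or a positive circle would require a second independent circle. For maximality, take $e\in S\setm B$ in a component $V_i$. If $V_i$ is balanced, then $T_i\cup\{e\}$ closes up the fundamental circle $C_e$, which is positive, hence a frame circuit. If $V_i$ is unbalanced, analyze $C_e$: a positive $C_e$ is itself a frame circuit; and if instead $e$ is an unbalanced edge, or $C_e$ is a second negative circle, then this negative object together with $C_f$ produces a frame circuit — a tight or loose handcuff when the two negative circles are vertex-disjoint or meet in a single vertex, and otherwise, when $C_e$ and $C_f$ share a tree path, a theta graph whose third circle is $C_e\oplus C_f$.

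The main obstacle is precisely this last point: confirming that adjoining an edge completing a second negative circle always yields a frame circuit. The handcuff subcases are immediate from the definition once the disjointness is read off from the tree structure. The theta-graph subcase is the one genuinely non-formal step, and I would settle it by the direct sign computation $\sigma(C_e\oplus C_f)=\sigma(C_e)\sigma(C_f)$, valid because each edge shared by $C_e$ and $C_f$ contributes its sign squared and so drops out; with $\sigma(C_e)=\sigma(C_f)=-$ this forces $C_e\oplus C_f$ to be a positive circle, a frame circuit. Thus $B$ is a maximal frame-circuit-free set, hence $\bx(B)$ is a basis of $\langle\bx(S)\rangle$, and $\dim\langle\bx(S)\rangle=|B|=n-b(S)=\rk S$.
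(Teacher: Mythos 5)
Your argument is correct, and it actually supplies a complete proof where the paper gives only a two-sentence sketch: the paper defers to the frame matroid $G(\Sigma)$ and to Zaslavsky (1982a), Theorem 8B.1, noting only that the essence is to use Corollary \ref{C:sgindep} to compare generation by closure with generation of $\langle\bx(S)\rangle$. You make the same pivot on Corollary \ref{C:sgindep} but then replace the matroid machinery with an explicit basis: a spanning tree in each balanced component, a spanning tree plus one unbalancing edge in each unbalanced component. All the delicate points check out --- the existence of the unbalancing non-tree edge $f$ follows from the switching argument of Section \ref{balalg}; a lone negative circle or unbalanced edge is indeed not a frame circuit, so $T_i\cup\{f\}$ is independent; two tree paths intersect in a path, so the ``two negative fundamental circles'' case really does split cleanly into the disjoint, one-vertex, and theta subcases; and the sign computation $\sigma(C_e\oplus C_f)=\sigma(C_e)\sigma(C_f)$ settles the theta case. (Isolated vertices count as balanced components, so your sum $\sum_i n_i - b(S)$ does equal $n-b(S)$; loose edges and positive loops have zero vectors and are frame circuits, so they harmlessly fall outside $B$.) What your route buys is self-containedness and an explicit basis of $\langle\bx(S)\rangle$; what the paper's route buys is brevity and the observation that this is just the matroid-rank formula for $G(\Sigma)$, which then does further work elsewhere (e.g., in Theorem \ref{T:irank} and Lemma \ref{L:hypdim}).
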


\begin{proof}
The proof is simplest when expressed in terms of the frame matroid (Section \ref{matroid}), so I omit it; see Zaslavsky (1982a), Theorem 8B.1 and following remarks.  The essence of the proof is using Corollary \ref{C:sgindep} to compare the minimum number of edges required to generate $S$ by closure in $\Sigma$ to the minimum number of vectors $\bx(e)$ required to generate $\langle \bx(S) \rangle$.
\end{proof}

\subsubsection*{Why the signs?}\

It is natural to wonder, at first sight, why the signs in a link vector come out as they do, opposite for a positive edge and similar for a negative edge rather than the reverse.  There are several reasons.  The strongest is the correlation between the linear dependencies of edge vectors and frame circuits in $\Sigma$ as stated in Theorem \ref{T:dependence}.  If we were to adopt the opposite correspondence of edge sign with vector signs so that a positive edge had two entries with the same sign, we would be forced to define frame circuits in terms of circle signs in $-\Sigma$ instead of $\Sigma$.  That would be unnecessarily complicated.

Another justification for our sign convention is in the equations of edge hyperplanes; see Section \ref{arr}.

\subsubsection*{Orientation}\

Choosing $\bx(e)$ or $-\bx(e)$ corresponds to choosing an orientation of $\Sigma$.  Orient $\Sigma$ as $\Beta = (|\Sigma|,\tau)$, and define 
\begin{equation}
\eta_{ve} := \sum_{\text{incidences $(v,e)$}} \tau(v,e).
\label{E:taueta}
\end{equation}
Then $\bx(e)_v = \eta_{ve}$.  Conversely, if we choose $\bx(e)$ first, there is a unique $\tau$ that orients $\Sigma$ and satisfies \eqref{E:taueta}, with the exception that the orientation of a positive loop is arbitrary.

\subsection{The incidence matrix}\label{i}\

The \emph{incidence matrix} $\Eta(\Sigma) = \begin{pmatrix} \eta_{ve} \end{pmatrix}_{v\in V, e\in E}$ (read `Eta of Sigma') is a $V \times E$ matrix (thus, it has $n$ rows and $m$ columns where $m := |E|$) in which the column corresponding to edge $e$ is the column vector $\bx(e)$; that is, 
$$\Eta(\Sigma) := \begin{bmatrix} \bx(e_1) & \bx(e_2) & \cdots & \bx(e_m) \end{bmatrix}.$$
This matrix of a signed graph has a role in more than geometry; it underlies both the Laplacian matrix and the line graph of $\Sigma$.

A small example:
\par
\begin{center}
\includegraphics[scale=.7]{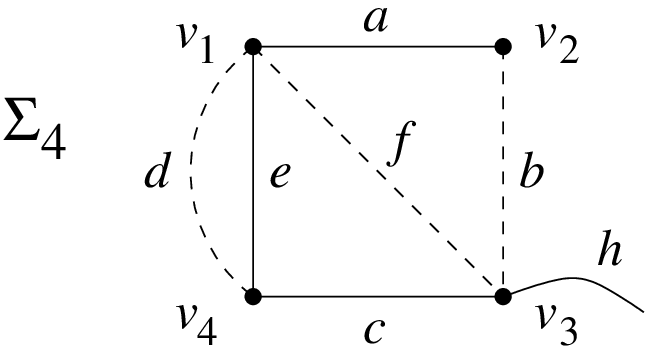}	
\qquad
\raisebox{1.3 cm}{
$\Eta(\Sigma_4) = 
\begin{array}{c}
\begin{array}{ccccccc}
\ a\ & b\ \,& c\ \,& d\ \,& e\ &\, f\ & h 
\end{array}
\\[6pt]
\begin{pmatrix}
1	&0	&0	&1	&-1	&-1	&\,0	\\
-1	&1	&0	&0	&0	&0	&0	\\
0	&1	&1	&0	&0	&-1	&1	\\
0	&0	&-1	&1	&1	&0	&0	\\
\end{pmatrix}
\end{array}$
}
\end{center}

\begin{thm}\label{T:irank}
Over a field whose characteristic is not\/ $2$, the rank of\/ $\Eta(\Sigma)$ is $\rk\Sigma = n - b(\Sigma)$ and, for $S \subseteq E$, the rank of\/ $\Eta(\Sigma|S)$ is $\rk S = n - b(S)$.
\end{thm}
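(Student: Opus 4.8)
The plan is to recognize this theorem as a direct translation of Theorem \ref{T:rank} into the language of matrix rank. The rank of any matrix over a field equals the dimension of the vector space spanned by its columns. By the definition of the incidence matrix, the columns of $\Eta(\Sigma|S)$ are precisely the edge vectors $\bx(e)$ for $e \in S$: the edges of $\Sigma|S = (V,S)$ are exactly the members of $S$, and each column is the corresponding $\bx(e)$. Hence the column space of $\Eta(\Sigma|S)$ is $\langle \bx(S) \rangle$, and $\rk \Eta(\Sigma|S) = \dim \langle \bx(S) \rangle$.

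First I would invoke Theorem \ref{T:rank}, valid since $\Char\F \neq 2$, to conclude that $\dim \langle \bx(S) \rangle = \rk S = n - b(S)$. Combined with the previous paragraph this gives $\rk \Eta(\Sigma|S) = n - b(S)$, the second assertion. The first assertion is then the special case $S = E$: the columns of $\Eta(\Sigma) = \Eta(\Sigma|E)$ are all the edge vectors $\bx(E)$, so its rank is $\dim \langle \bx(E) \rangle = \rk E = n - b(\Sigma)$.

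There is essentially no obstacle here; all the substance lives in Theorem \ref{T:rank} (and, behind it, in the dependence criterion of Theorem \ref{T:dependence}). The only points deserving a remark are bookkeeping ones: a zero column, which arises from a positive loop, affects neither the matrix rank nor the dimension of the span, and repeated columns are equally harmless, since both quantities depend only on the spanned subspace and not on the multiset of columns. Thus the identity $\rk \Eta(\Sigma|S) = \dim \langle \bx(S) \rangle$ holds without restriction, and the theorem follows at once.
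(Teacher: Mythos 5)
Your proof is correct and is essentially identical to the paper's: both identify the column space of $\Eta(\Sigma|S)$ with $\langle \bx(S) \rangle$ and then invoke Theorem \ref{T:rank}. The remarks about zero and repeated columns are harmless extra bookkeeping that the paper leaves implicit.
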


\begin{proof}
The column rank is the dimension of the span of the columns corresponding to $S$, which is the span of $\bx(S)$.  Apply Theorem \ref{T:rank}.
\end{proof}

\subsection{Frame matroid}\label{matroid}\

The \emph{frame matroid} $G(\Sigma)$ is an abstract way of describing all the previous vector-like characteristics of a signed graph:  dependent edge sets, minimal dependencies, rank, closure, and closed sets.  See Zaslavsky (1982a), Section 5, for details.  For these aspects of matroid theory consult the early chapters of the excellent text by Oxley \cite{Oxley}.

I mention matroids here because in the frame matroid $G(\Sigma)$ we have a notion of \emph{independent edge set} which expresses abstractly, on an equal footing, both the independence of representation vectors and the corresponding independence a set of edges whose columns in $\Eta(\Sigma)$ are linearly independent.  Hence, theorems of matroid theory can be useful in signed-graphic geometry; but that is a road we do not take here.

\subsection{The adjacency and Laplacian (Kirchhoff) matrices}\label{ak}\

The \emph{adjacency matrix} is $A(\Sigma) = (a_{ij})_{n\times n}$ defined by 
$$a_{ii} := 0 + h_{v_i} + 2l_{v_i}^+ - 2l_{v_i}^-,$$
where $h_v$ is the number of half edges at $v$ and $l_v^\epsilon$ is the number of loops at $v$ with sign $\epsilon$, and for $i \neq j$, 
$$a_{ij} := \text{ (the number of positive edges $v_iv_j$) $-$ (the number of negative edges $v_iv_j$)}.$$  
An important fact about the adjacency matrix is that it does not change if a parallel pair of edges, one positive and one negative, is deleted from $\Sigma$ (this is \emph{cancellation of a negative digon}), or if a loose edge or positive loop is deleted.  A signed link graph is \emph{reduced} if it has no such parallel pairs, and no positive loops or loose edges.  Up to isomorphism there is a unique reduced signed graph $\bar\Sigma$ with the same adjacency matrix as $\Sigma$.

The \emph{Laplacian matrix} or \emph{Kirchhoff matrix} of $\Sigma$ is $L(\Sigma) := D(|\Sigma|) - A(\Sigma)$, where $D(|\Sigma|)$, called the \emph{degree matrix}, is the diagonal matrix whose diagonal element $d_{ii} = d_{|\Sigma|}(v_i)$.  
(Recall that a loop counts twice in the degree of an unsigned graph.)  
I mention the Laplacian here because it gives us a second application of the incidence matrix.

We think of these as matrices over the complex numbers so we can talk about their eigenvalues and eigenvectors.

Some examples:  
\begin{itemize}
\item $A(-\Sigma) = -A(\Sigma)$.
\item $A(+\Gamma) = A(\Gamma)$, the adjacency matrix of $\Gamma$.
\item $A(-\Gamma) = -A(\Gamma)$.
\item $L(+\Gamma) = L(\Gamma) := D(\Gamma) - A(\Gamma)$, the Laplacian (or Kirchhoff) matrix of $\Gamma$.
\item $L(-\Gamma) = Q(\Gamma) := D(\Gamma) + A(\Gamma)$, the so-called `signless Laplacian matrix' (or '$Q$-matrix') of $\Gamma$, which has recently been studied intensively; in signed graph theory it is simply another Laplacian matrix, though an especially important one.
\end{itemize}

A particular example is $\Sigma_4$ from Section \ref{i}:
$$A(\Sigma_4) = 
\begin{pmatrix}
0	&1	&-1	&\ 0	\\
1	&0	&-1	&\ 0	\\
-1	&-1	&1	&\ 1	\\
0	&0	&1	&\ 0	\\
\end{pmatrix},
\qquad
L(\Sigma_4) = 
\begin{pmatrix}
4	&-1	&1	&\ 0	\\
-1	&2	&1	&\ 0	\\
1	&1	&3	&\ -1	\\
0	&0	&-1	&\ 3	\\
\end{pmatrix}$$

\begin{prop}\label{P:kirchhoff}
For a signed graph $\Sigma$, $L(\Sigma) = \Eta(\Sigma) \Eta(\Sigma)\transpose.$
\end{prop}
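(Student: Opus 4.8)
The plan is to verify the matrix identity entry by entry, since both $L(\Sigma)$ and $\Eta(\Sigma)\Eta(\Sigma)\transpose$ are symmetric matrices indexed by $V\times V$. Writing $\eta_{ve}=\bx(e)_v$ for the $v$-entry of the edge column $\bx(e)$, the $(u,v)$ entry of the product is
\[
\big(\Eta(\Sigma)\Eta(\Sigma)\transpose\big)_{uv}=\sum_{e\in E}\eta_{ue}\,\eta_{ve},
\]
so I would compare this sum against $L_{uv}=d_{ii}\delta_{uv}-a_{uv}$ in the two cases $u\neq v$ and $u=v$ separately, reading the values $\eta_{ve}$ directly off the explicit edge-vector formulas of Section~\ref{vectors}.

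First I would treat the off-diagonal entries, $u\neq v$. Here the only edges contributing a nonzero product $\eta_{ue}\eta_{ve}$ are the links joining $u$ and $v$, since a loop, half edge, or loose edge has a nonzero entry in at most one of the two rows $u,v$. Reading off the vectors, a positive link $uv$ has entries of opposite sign in rows $u$ and $v$, hence product $-1$, while a negative link $uv$ has entries of equal sign, hence product $+1$. Summing over the parallel class of $uv$-edges yields $-(\#\text{ positive }uv)+(\#\text{ negative }uv)=-a_{uv}$, which equals $L_{uv}$ because the degree matrix is diagonal. This is precisely the step where the sign convention discussed under ``Why the signs?'' does its work: opposite signs for a positive edge are exactly what make the inner product reproduce $-A(\Sigma)$.

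The diagonal entries, $u=v$, are where the bookkeeping must be done with care, and this is the step I expect to be the only real obstacle. Here $\big(\Eta(\Sigma)\Eta(\Sigma)\transpose\big)_{vv}=\sum_{e}\eta_{ve}^2$, and I would tabulate the contribution of each edge type at $v$: a link contributes $(\pm1)^2=1$; a positive loop contributes $0$, since its vector is $\mathbf 0$; a negative loop contributes $(\pm2)^2=4$, since its entry is $\pm(1-\sigma(e))=\pm2$; and a half edge contributes $(\pm1)^2=1$. I would then expand $L_{vv}=d_{|\Sigma|}(v)-a_{vv}$ using the loop-counts-twice degree convention together with the diagonal formula $a_{vv}=h_v+2l_v^+-2l_v^-$, and match the two tallies term by term. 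The delicate part is reconciling the factor-of-two weightings—a loop counting twice in $d_{|\Sigma|}(v)$, the entry $\pm2$ of a negative loop squaring to $4$, and the $2l_v^+,\,2l_v^-$ corrections built into $a_{vv}$—so that the sum of squared entries reproduces $d_{|\Sigma|}(v)-a_{vv}$ exactly; everything off the diagonal is then immediate from the link computation above.
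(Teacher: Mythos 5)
Your approach is exactly the paper's: the paper gives no details beyond ``the proof is by matrix multiplication,'' and your entrywise comparison of $\big(\Eta(\Sigma)\Eta(\Sigma)\transpose\big)_{uv}=\sum_{e}\eta_{ue}\eta_{ve}$ with $L_{uv}$ is that calculation. The off-diagonal case is complete and correct: only $uv$-links contribute, a positive link gives product $-1$, a negative link gives $+1$, so the sum is $-a_{uv}=L_{uv}$.

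The gap is that the one step you defer --- matching the diagonal tallies --- is precisely where the argument, run literally against the paper's printed formulas, does not close, and you never actually run it. Your tally of $\sum_e\eta_{ve}^2$ is right: each link end at $v$ contributes $1$, each positive loop $0$, each negative loop $4$, each half edge $1$, for a total of $(\text{link ends at }v)+4l_v^-+h_v$. On the other side, $d_{|\Sigma|}(v)=(\text{link ends at }v)+2l_v^++2l_v^-+h_v$ (a half edge has one end, hence contributes $1$ to the degree), and with the printed $a_{vv}=0+h_v+2l_v^+-2l_v^-$ you get $L_{vv}=d_{|\Sigma|}(v)-a_{vv}=(\text{link ends at }v)+4l_v^-$, which falls short of the incidence-matrix side by exactly $h_v$. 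The identity forces the half-edge coefficient in $a_{vv}$ to be $0$, not $1$ --- i.e.\ the leading ``$0$'' in the paper's formula should be read as the coefficient $0\cdot h_{v_i}$, so that $a_{vv}=2l_v^+-2l_v^-$ and the half edge's degree contribution of $1$ survives into $L_{vv}$ to match $(\pm1)^2$. You should either carry the tally to this conclusion (the loop bookkeeping you worry about is fine: $2-2=0$ for a positive loop and $2-(-2)=4$ for a negative one, matching $0$ and $4$) or explicitly note the required half-edge convention; as written, the ``only real obstacle'' you identify is left unresolved, and resolving it naively would refute the proposition on any graph with a half edge.
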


The proof is by matrix multiplication---the same straightforward calculation as with ordinary graphs.  

The eigenvalues of $A(\Sigma)$ are known as the \emph{eigenvalues of\/ $\Sigma$}.  Those of $L(\Sigma)$ are called the \emph{Laplacian eigenvalues} of $\Sigma$.  
Every graph in the switching class $[\Sigma]$ has the same spectrum.

\begin{thm}\label{T:e}
The eigenvalues of $A(\Sigma)$ are real.  

The eigenvalues of $L(\Sigma)$ are real and non-negative.

The eigenvalues of each matrix are unchanged by switching $\Sigma$.
\end{thm}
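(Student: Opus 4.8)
The plan is to reduce all three assertions to two standard facts from linear algebra: that a real symmetric matrix has only real eigenvalues, and that a matrix of the form $MM\transpose$ is positive semidefinite. The one structural observation I need, which powers the invariance claim, is that switching acts on both matrices by conjugation with a diagonal $\pm1$ matrix.

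First I would record that $A(\Sigma)$ is a real symmetric matrix: the off-diagonal entry $a_{ij}$ counts signed edges between $v_i$ and $v_j$ and is therefore symmetric in $i,j$, while the diagonal entries $a_{ii}=h_{v_i}+2l^+_{v_i}-2l^-_{v_i}$ are real. By the spectral theorem a real symmetric matrix has only real eigenvalues, which settles the first statement. For the Laplacian I would invoke Proposition \ref{P:kirchhoff}, namely $L(\Sigma)=\Eta(\Sigma)\Eta(\Sigma)\transpose$. This is symmetric, so its eigenvalues are real; moreover, for any $\bx\in\bbR^n$ we have $\bx\transpose L(\Sigma)\bx = \bx\transpose \Eta(\Sigma)\Eta(\Sigma)\transpose\bx = \|\Eta(\Sigma)\transpose\bx\|^2\ge 0$, so $L(\Sigma)$ is positive semidefinite and its eigenvalues are non-negative. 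That gives the second statement.

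For invariance under switching I would introduce the diagonal matrix $D := \operatorname{diag}(\zeta(v_1),\dots,\zeta(v_n))$, which satisfies $D\transpose = D = D\inv$ because each $\zeta(v_i)\in\{+1,-1\}$. The switched signature $\sigma^\zeta(e)=\zeta(v)\sigma(e)\zeta(w)$ shows that the off-diagonal entries transform as $a^\zeta_{ij}=\zeta(v_i)a_{ij}\zeta(v_j)$; the diagonal entries are unchanged, since a loop's two endpoints coincide (so its sign is multiplied by $\zeta(v)^2=1$) and half edges carry no sign. Hence $A(\Sigma^\zeta)=DA(\Sigma)D=DA(\Sigma)D\inv$, a similarity transformation, so $A(\Sigma^\zeta)$ and $A(\Sigma)$ have the same spectrum. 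Because the degree matrix $D(|\Sigma|)$ depends only on $|\Sigma|$ and is diagonal (hence commutes with $D$), the same conjugation carries over: $L(\Sigma^\zeta)=D(|\Sigma|)-A(\Sigma^\zeta)=DL(\Sigma)D\inv$. Equivalently, one may argue directly from $\Eta(\Sigma^\zeta)=D\,\Eta(\Sigma)$, which follows from the switching rule $\tau^\zeta(v,e)=\tau(v,e)\zeta(v)$ of Lemma \ref{L:swbd}.

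The main obstacle is really just the bookkeeping in the invariance step: I must check carefully that the diagonal of $A(\Sigma)$ is genuinely untouched by switching and that $D$ commutes with the degree matrix, so that the clean conjugation $X\mapsto DXD\inv$ holds exactly for both $A$ and $L$. Everything else is an appeal to the spectral theorem and to the positive semidefiniteness of a Gram matrix.
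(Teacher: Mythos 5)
Your proof is correct and follows essentially the same route as the paper: symmetry of $A(\Sigma)$ for realness, the factorization $L(\Sigma)=\Eta(\Sigma)\Eta(\Sigma)\transpose$ from Proposition \ref{P:kirchhoff} for positive semidefiniteness, and conjugation by the diagonal $\pm1$ matrix $D(\zeta)$ for switching invariance. The extra bookkeeping you supply (the diagonal of $A$ being fixed by switching, $D(\zeta)$ commuting with the degree matrix, and the alternative $\Eta(\Sigma^\zeta)=D(\zeta)\Eta(\Sigma)$ route) is sound and simply fills in details the paper leaves implicit.
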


\begin{proof}
As with ordinary graphs, $A(\Sigma)$ is symmetric and $L(\Sigma) = \Eta(\Sigma) \Eta(\Sigma)\transpose$ is positive semidefinite.  

Switching $\Sigma$ by $\zeta$ has the effect on the adjacency and Laplacian matrices of conjugating them by a diagonal matrix $D(\zeta)$ whose diagonal entries are the values $\zeta(v_i)$; i.e., 
$$A(\Sigma^\zeta) = D(\zeta)A(\Sigma)D(\zeta) \quad \text{ and } \quad L(\Sigma^\zeta) = D(\zeta)L(\Sigma)D(\zeta) .$$  
The conjugation does not alter the eigenvalues.
\end{proof}

\subsubsection*{A use for the Laplacian}\

\begin{thm}[Matrix-Tree Theorem for Signed Graphs]\label{T:matrixtree}
Let $b_i :=$ the number of sets of\/ $n$ independent edges in $\Sigma$ that contain exactly $i$ circles.  Then $\det L(\Sigma) = \sum_{i=0}^n 4^i b_i.$
\end{thm}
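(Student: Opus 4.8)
The plan is to expand $\det L(\Sigma)$ by the Cauchy--Binet formula and match the resulting terms with the edge sets being counted. By Proposition \ref{P:kirchhoff}, $L(\Sigma) = \Eta(\Sigma)\Eta(\Sigma)\transpose$, where $\Eta(\Sigma)$ is the $n \times m$ matrix whose columns are the vectors $\bx(e)$. For an $n$-element set $S \subseteq E$ write $\Eta_S$ for the $n \times n$ submatrix formed by the columns indexed by $S$. Cauchy--Binet then gives
$$
\det L(\Sigma) = \sum_{\substack{S \subseteq E \\ |S| = n}} (\det \Eta_S)^2 .
$$
A term is nonzero exactly when the columns $\bx(S)$ are linearly independent, which by Corollary \ref{C:sgindep} means the $n$-edge set $S$ contains no frame circuit, i.e.\ is independent. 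For such $S$ the matrix $\Eta_S = \Eta(\Sigma|S)$ has full rank $n$, so by Theorem \ref{T:irank} we get $n = \rk S = n - b(S)$, hence $b(S)=0$: every component of $(V,S)$ is unbalanced. Thus the sum ranges over precisely the sets of $n$ independent edges in the statement, and it remains to show $(\det \Eta_S)^2 = 4^{i}$ when $S$ contains exactly $i$ circles.

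Next I would analyze the structure of such an $S$. If a component of $(V,S)$ has $k$ vertices then, being unbalanced and independent, it has $\rk = k - 0 = k$, hence exactly $k$ edges; a connected graph on $k$ vertices with $k$ edges carries a single independent circle. Since the component contains no frame circuit, any genuine circle in it must be negative, while an unbalanced component with no circle must instead carry a half edge or a negative loop (which acts frame-theoretically like a negative circle). So each component is a tree with one extra unbalanced feature attached: a negative circle, a negative loop, or a half edge. The number $i$ of circles in $S$ is exactly the number of components whose feature is a negative circle or a negative loop (a half edge is not a circle). After permuting rows and columns, $\Eta_S$ is block diagonal with one block per component, so $(\det \Eta_S)^2 = \prod_{\text{components}} (\det B)^2$, reducing the problem to evaluating each block.

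The heart of the argument is the single-component computation. In a block $B$ for a component on $k$ vertices, any leaf vertex $v$ meets exactly one edge $e$, so the row of $v$ has a single nonzero entry $\pm 1$; expanding along that row deletes $v$ and $e$ and multiplies $\det B$ by $\pm 1$. Peeling off all tree edges this way reduces $(\det B)^2$ to the squared incidence determinant of the core, which is a negative circle, a negative loop, or a half edge. A half edge has vector $\pm 1$, giving $(\det)^2 = 1$; a negative loop has vector $\pm(1-\sigma(e)) = \pm 2$, giving $(\det)^2 = 4$. For a negative circle I would use that $(\det)^2$ is switching invariant: since switching multiplies each end-sign $\tau(v,e)$ by $\zeta(v)$, one has $\Eta(\Sigma^\zeta) = D(\zeta)\Eta(\Sigma)$ with $D(\zeta)$ the diagonal matrix of signs $\zeta(v_i)$, so $\det\Eta_S$ only changes sign. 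Switch the circle so it has exactly one negative edge; summing all vertex rows zeros out every positive-edge column and leaves $2$ in the lone negative-edge column, and expanding along that combined row against the triangular incidence matrix of the remaining positive path yields $\det = \pm 2$, so $(\det B)^2 = 4$. Hence each component contributes the factor $4$ when it contains a circle and the factor $1$ otherwise.

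Combining, $(\det \Eta_S)^2 = 4^{i}$ when $S$ has exactly $i$ circles, so
$$
\det L(\Sigma) = \sum_{\substack{S \subseteq E \\ |S| = n}} (\det \Eta_S)^2 = \sum_{i=0}^{n} 4^{i} b_i .
$$
I expect the main obstacle to be the sign bookkeeping in the negative-circle determinant: a naive permutation expansion gives $\det = \pm\prod a_i \pm \prod b_i$, and one must verify the two signs are opposite so the value is $\pm 2$ rather than $0$. Normalizing by switching to a single negative edge, as above, circumvents this cleanly and at the same time explains why a positive circle yields $0$, consistent with its being a dependent frame circuit.
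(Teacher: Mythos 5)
Your proof is correct and follows exactly the route the paper indicates: the paper gives no details beyond saying the result follows from Cauchy--Binet applied to $L(\Sigma)=\Eta(\Sigma)\Eta(\Sigma)\transpose$ ``in the same way as'' the ordinary Matrix-Tree Theorem, and your write-up is precisely that argument carried out in full. The component analysis (each unbalanced independent component is a tree plus one negative circle, negative loop, or half edge) and the determinant evaluations ($4$ for a negative circle or negative loop, $1$ for a half edge, via switching to a single negative edge and row-summing) are the intended and correct completion.
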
 

The proof uses the Cauchy-Binet Theorem in the same way as it is used to prove the Matrix-Tree Theorem for ordinary graphs.  Note that the $i$ circles must all be negative for the edge set to be independent.  
Chaiken (1982a) has a generalization to signed digraphs and to arbitrary minors of the Laplacian matrix.

\subsection{Arrangements of hyperplanes}\label{arr}\

An \emph{arrangement of hyperplanes} in $\bbR^n$, $\cH = \{ h_1, h_2, \ldots, h_m\}$, is a finite set of hyperplanes.  A \emph{region} of $\cH$ is a connected component of the complement, $\bbR^n \setm \big( \bigcup_{k=1}^m h_k \big)$.  We write $r(\cH) :=$ the number of regions.  The \emph{intersection lattice} is the family $\cL(\cH)$ of all subspaces that are intersections of hyperplanes in $\cH$, partially ordered by reverse inclusion, $s \leq t \iff t \subseteq s$.  The \emph{characteristic polynomial} of $\cH$ is
\begin{equation}\label{E:charpoly}
p_\cH(\lambda) := \sum_{\cS \subseteq \cH} (-1)^{|\cS|} \lambda^{\dim\cS} ,
\end{equation}
where $\dim\cS := \dim\big( \bigcap_{h_k \in \cS} h_k \big).$

\begin{thm}[{\cite[Theorem A]{FUTA}}]\label{T:futa}
$r(\cH) = (-1)^n p_\cH(-1).$
\end{thm}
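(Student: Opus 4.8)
The plan is to prove that $r(\cH) = (-1)^n p_\cH(-1)$ by deletion-restriction induction on the number of hyperplanes $m$, which is the standard and cleanest route to this kind of counting identity. First I would record the base case $m = 0$: the empty arrangement has a single region (all of $\bbR^n$), and its characteristic polynomial is $p_\cH(\lambda) = \lambda^n$ (the only subset $\cS \subseteq \cH$ is $\eset$, contributing $(-1)^0 \lambda^{\dim \bbR^n} = \lambda^n$), so $(-1)^n p_\cH(-1) = (-1)^n (-1)^n = 1 = r(\cH)$, as required.

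For the inductive step I would fix one hyperplane $h \in \cH$ and set $\cH' := \cH \setm \{h\}$ (the \emph{deletion}) and $\cH'' := \{ h_k \cap h : h_k \in \cH', \ h_k \cap h \neq h \}$, the \emph{restriction} of the remaining hyperplanes to $h$ (an arrangement inside the $(n-1)$-dimensional space $h$). The two facts I would establish are a geometric region count and an algebraic relation among characteristic polynomials. The geometric half asserts that
\begin{equation*}
r(\cH) = r(\cH') + r(\cH''),
\end{equation*}
which one sees by observing that passing from $\cH'$ to $\cH$ adds the hyperplane $h$; a region of $\cH'$ either misses $h$ (and survives as a region of $\cH$) or is sliced by $h$ into exactly two regions, and the number of regions so sliced equals the number of regions that $h$ is cut into by its intersections with the other hyperplanes, i.e. $r(\cH'')$. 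The algebraic half is the deletion-restriction recurrence
\begin{equation*}
p_\cH(\lambda) = p_{\cH'}(\lambda) - p_{\cH''}(\lambda),
\end{equation*}
which I would derive by splitting the sum in \eqref{E:charpoly} over subsets $\cS \subseteq \cH$ according to whether $h \in \cS$: the subsets with $h \notin \cS$ reassemble into $p_{\cH'}(\lambda)$, while the subsets containing $h$, after factoring out the sign from $h$ and intersecting with $h$, reassemble into $-p_{\cH''}(\lambda)$.

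Granting these two relations, the induction closes immediately: assuming the theorem for $\cH'$ (ambient dimension $n$) and for $\cH''$ (ambient dimension $n-1$), I substitute $\lambda = -1$ into the polynomial recurrence to get $p_\cH(-1) = p_{\cH'}(-1) - p_{\cH''}(-1)$, multiply through by $(-1)^n$, and use $(-1)^n p_{\cH'}(-1) = r(\cH')$ together with $(-1)^{n-1} p_{\cH''}(-1) = r(\cH'')$ — so that $(-1)^n p_{\cH''}(-1) = -r(\cH'')$ — to obtain $(-1)^n p_\cH(-1) = r(\cH') + r(\cH'') = r(\cH)$.

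The main obstacle I anticipate is the bookkeeping in the algebraic recurrence, specifically the bijection between subsets $\cS$ with $h \in \cS$ and subsets of $\cH''$. One must account for the fact that distinct hyperplanes of $\cH'$ can restrict to the same hyperplane of $\cH''$, and that a hyperplane $h_k$ with $h_k \cap h = h$ (i.e. $h_k = h$, impossible here, or $h_k$ parallel considerations in the affine setting) must be excluded from $\cH''$; handling degenerate intersections and the dimension drop $\dim(\bigcap_{h_k \in \cS} h_k) = \dim(\bigcap_{h_k \in \cS \setm \{h\}} (h_k \cap h))$ carefully is where the argument requires attention, though it is routine rather than deep. The geometric slicing count is intuitively clear but also deserves a careful justification that every one of the $r(\cH'')$ cells of $h$ corresponds to exactly one region of $\cH'$ that $h$ genuinely separates.
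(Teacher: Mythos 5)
The paper gives no proof of this statement at all---it is imported by citation from the author's memoir \cite[Theorem A]{FUTA}---and your deletion--restriction induction ($r(\cH)=r(\cH')+r(\cH'')$ together with $p_\cH=p_{\cH'}-p_{\cH''}$) is precisely the classical argument used there, carried out correctly here, including the right sign bookkeeping in the dimension drop to $h$. The one delicate point you flag, that several hyperplanes of $\cH'$ may restrict to the same hyperplane of $\cH''$, is indeed resolvable from the subset-sum definition \eqref{E:charpoly} by the cancellation $\sum_{\eset\neq B\subseteq A_g}(-1)^{|B|}=-1$ over the fibers $A_g$ of the restriction map (and note that for the central arrangements $\cH[\Sigma]$ arising in this paper all intersections are nonempty, so the parallel/empty-intersection caveat never bites).
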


\subsubsection{The signed-graphic hyperplane arrangement}\

A signed graph $\Sigma$, with edge set $\{ e_1, e_2, \ldots, e_m \}$, gives rise to a hyperplane arrangement 
$$
\cH[\Sigma] := \{ h_1, h_2, \ldots, h_m\} 
$$
where $h_k$ is the solution set of the equation $\bx(e_k) \cdot \bx = 0$; i.e., $$h_k = \{ \bx \in \bbR^n : \bx(e_k) \cdot \bx = 0 \}.$$  (By $\cdot$ I mean the dot product $\by\cdot \bx := y_1x_1+\cdots+y_nx_n$.)
In terms of the graph, 
\begin{align*}
h_k &\text{ has the equation } 
\begin{cases}
x_j = \sigma(e_k) x_i, &\text{ if $e_k$ is a link or loop with endpoints } v_i, v_j, \\
x_i = 0, &\text{ if $e_k$ is a half edge or a negative loop at } v_i, \\
\ 0 = 0, &\text{ if $e_k$ is a loose edge or a positive loop}.
\end{cases}
\end{align*}

A positive link gives the equation $x_j = +x_i$, a negative one the equation $x_j = -x_i$, and a negative loop the equation $x_i = -x_i$, simplified here to $x_i = 0$.  The signs in the equations of edge hyperplanes agree with the edge signs; for that reason I am inclined to think hyperplanes are more fundamental than the vectors defined in Section \ref{i}.

(The equation $0=0$ of a loose edge or $x_i=x_i$ of a positive loop has the solution set $\bbR^n$, which is not truly a hyperplane, but I allow it under the name `degenerate hyperplane'.  
If the arrangement contains the degenerate hyperplane, it has no regions, because when the hyperplanes are removed from $\bbR^n$, nothing remains.  For a few reasons we cannot avoid the degenerate hyperplane; the chief is that it appears in the geometrical contraction-deletion process corresponding to that used to evaluate chromatic polynomials; see Theorem \ref{T:chiprops}.)

\begin{lem}\label{L:hypdim}
Let $\cS = \{h_{i_1},\ldots,h_{i_l}\}$ be the subset of\/ $\cH[\Sigma]$ that corresponds to the edge set $S = \{e_{i_1},\ldots,e_{i_l}\}$.  Then $\dim\bigcap\cS = b(S)$.
\end{lem}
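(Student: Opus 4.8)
The plan is to recognize $\bigcap\cS$ as the solution set of a homogeneous linear system and then translate its dimension into graph-theoretic terms by invoking the rank results already established. By definition each hyperplane is $h_{i_j} = \{\bx \in \bbR^n : \bx(e_{i_j})\cdot\bx = 0\}$, so their intersection is $\bigcap\cS = \{\bx \in \bbR^n : \bx(e)\cdot\bx = 0 \text{ for all } e \in S\}$. This is precisely the orthogonal complement $\langle\bx(S)\rangle^\perp$ of the span of the edge vectors indexed by $S$; equivalently, it is the null space of $\Eta(\Sigma|S)\transpose$, the matrix whose rows are the vectors $\bx(e)$ for $e \in S$.

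First I would apply the rank--nullity relation in $\bbR^n$: since $\bigcap\cS = \langle\bx(S)\rangle^\perp$, its dimension is $n - \dim\langle\bx(S)\rangle$. Next I would invoke Theorem \ref{T:rank} (with $\F = \bbR$, so $\Char\F \neq 2$), which gives $\dim\langle\bx(S)\rangle = \rk S$. Combining this with the definition $\rk S = n - b(S)$ yields $\dim\bigcap\cS = n - \rk S = n - \bigl(n - b(S)\bigr) = b(S)$, which is the claim. Alternatively, one may read $\dim\langle\bx(S)\rangle$ as the column rank of $\Eta(\Sigma|S)$ and quote Theorem \ref{T:irank}, but routing through Theorem \ref{T:rank} is the most direct path.

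The one point needing care is the treatment of degenerate hyperplanes. A loose edge or positive loop $e$ has $\bx(e) = \mathbf{0}$, so its equation is $0 = 0$ and its hyperplane is all of $\bbR^n$; such an edge imposes no constraint on the intersection and contributes nothing to $\langle\bx(S)\rangle$. This causes no difficulty, because Theorem \ref{T:rank} already accounts for these edges on the rank side, so no separate bookkeeping is required. In the end there is no substantive obstacle: the entire content has been packaged into Theorem \ref{T:rank}, and the lemma is essentially that theorem restated through orthogonal complementation.
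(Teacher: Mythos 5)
Your proposal is correct and is exactly the paper's argument: the paper's proof reads, in full, ``Apply vector space duality to Theorem \ref{T:rank},'' and your identification of $\bigcap\cS$ with $\langle\bx(S)\rangle^\perp$ followed by $\dim\bigcap\cS = n - \rk S = b(S)$ is precisely that duality spelled out.
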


\begin{proof}
Apply vector space duality to Theorem \ref{T:rank}.
\end{proof}

\begin{thm}\label{T:lattices}
$\cL(\cH[\Sigma])$, $\cL_\bbR(\Sigma)$, and $\Lat \Sigma$ are all isomorphic.  
\end{thm}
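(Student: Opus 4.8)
The plan is to lean on Corollary \ref{C:span}(a), which already supplies the isomorphism $\cL_\bbR(\Sigma) \cong \Lat\Sigma$, so that only one further isomorphism, $\cL(\cH[\Sigma]) \cong \cL_\bbR(\Sigma)$, needs to be produced. The natural tool is vector-space duality: I would use orthogonal complementation $\phi \colon U \mapsto U^\perp$ on subspaces of $\bbR^n$ and show that it carries one lattice onto the other.

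The key computation is that the hyperplane $h_k$ attached to $e_k$ is precisely the annihilator of the line $\langle \bx(e_k)\rangle$, since $h_k = \{ \bx : \bx(e_k)\cdot \bx = 0\}$. Intersecting annihilators gives, for any edge set $S$, the identity $\bigcap_{e_k \in S} h_k = \langle \bx(S)\rangle^\perp = \phi\big(\langle \bx(S)\rangle\big)$. Hence every element of the intersection lattice $\cL(\cH[\Sigma])$ is the $\phi$-image of an element $\langle \bx(S)\rangle$ of $\cL_\bbR(\Sigma)$, and conversely every such image is an element of $\cL(\cH[\Sigma])$; since $\phi\circ\phi$ is the identity on subspaces, $\phi$ restricts to a bijection between the two families of subspaces. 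As a consistency check on dimensions, Lemma \ref{L:hypdim} gives $\dim\bigcap\cS = b(S)$ while Theorem \ref{T:rank} gives $\dim\langle\bx(S)\rangle = n - b(S)$, exactly as demanded by $\dim U + \dim U^\perp = n$. The degenerate hyperplanes arising from loose edges and positive loops cause no difficulty, as their vectors are $\mathbf{0}$ and $\{\mathbf{0}\}^\perp = \bbR^n$ contributes nothing new to either lattice.

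It remains to verify that $\phi$ respects the two partial orders. Orthogonal complementation reverses inclusion: $U_1 \subseteq U_2$ if and only if $U_2^\perp \subseteq U_1^\perp$. Because $\cL(\cH[\Sigma])$ is ordered by \emph{reverse} inclusion while $\cL_\bbR(\Sigma)$ is ordered by inclusion, the two reversals cancel, so $\phi$ is order-preserving in both directions and is therefore a lattice isomorphism. Composing it with the isomorphism of Corollary \ref{C:span}(a) then yields the asserted threefold isomorphism.

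The step I expect to require the most care is not any single calculation but the bookkeeping of order conventions: it is essential that the intersection lattice is defined by reverse inclusion, for this is exactly what turns the inclusion-reversing behaviour of $\perp$ into an isomorphism rather than an anti-isomorphism. Everything else is the standard duality between a subspace spanned by the edge vectors and the intersection of the hyperplanes orthogonal to its generators.
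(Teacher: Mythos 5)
Your proposal is correct and follows the same route as the paper: the paper's proof likewise invokes Corollary \ref{C:span} for $\cL_\bbR(\Sigma)\cong\Lat\Sigma$ and cites ``standard vector-space duality'' for $\cL(\cH[\Sigma])\cong\cL_\bbR(\Sigma)$, which is exactly the orthogonal-complement bijection you spell out. Your explicit bookkeeping of the reverse-inclusion convention and the dimension check via Lemma \ref{L:hypdim} and Theorem \ref{T:rank} simply fills in what the paper leaves implicit.
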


\begin{proof}
The isomorphism between $\cL(\cH[\Sigma])$ and $\cL_\bbR(\Sigma)$ is standard vector-space duality.  The isomorphism $\cL_\bbR(\Sigma) \cong \Lat \Sigma$ is in Corollary \ref{C:span}.
\end{proof}

\subsubsection{Acyclic orientations reappear}\

Now recall acyclic orientations from Section \ref{bdg}.  
A notable fact is that the regions of $\cH[\Sigma]$ are in bijective correspondence with the acyclic orientations of $\Sigma$.  For an orientation $\tau$ define 
$$
R(\tau) := \big\{ \bx \in \bbR^n : \tau(v_i,e)x_i+\tau(v_j,e)x_j > 0 \text{ for every edge } e = v_iv_j \big\}.
$$

\begin{thm}\label{T:regions}\
\begin{enumerate}[{\rm(a)}]
\item $R(\tau)$ is nonempty $\iff$ $\tau$ is acyclic.
\item Every region is an $R(\tau)$ for some acyclic $\tau$. 
\end{enumerate}
\end{thm}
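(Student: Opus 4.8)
The plan is to reformulate $R(\tau)$ purely in terms of the oriented edge vectors and then apply a theorem of the alternative. Recall from \eqref{E:taueta} and the remark following it that, once $\Sigma$ is oriented by $\tau$, the column vector $\bx(e)$ has $v$-entry $\eta_{ve} = \sum_{(v,e)} \tau(v,e)$; write $\bx_\tau(e)$ for this vector to emphasize its dependence on $\tau$. For a link $e = v_iv_j$ the defining inequality of $R(\tau)$ is exactly $\eta_{v_ie}x_i + \eta_{v_je}x_j = \bx_\tau(e)\cdot\bx > 0$, and the same identity holds for half edges and loops. Hence
\[
R(\tau) = \{\, \bx \in \bbR^n : \bx_\tau(e)\cdot\bx > 0 \text{ for every } e \in E \,\},
\]
the open cone lying on the positive side of every hyperplane $h_e$, where the orientation has fixed which side is positive. (If $\Sigma$ has a loose edge or positive loop then $\bx_\tau(e) = \mathbf 0$, so $R(\tau) = \eset$; correspondingly such an edge is a frame circuit with neither source nor sink, and $\cH[\Sigma]$ is degenerate with no regions, so both parts hold vacuously. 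I assume from now on that no such edge is present.)

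For part (a) I would invoke Gordan's theorem: for the finite vector family $\{\bx_\tau(e)\}_{e\in E}$ exactly one of the following holds --- either there is an $\bx$ with $\bx_\tau(e)\cdot\bx > 0$ for all $e$, or there are coefficients $c_e \ge 0$, not all zero, with $\sum_e c_e \bx_\tau(e) = \mathbf 0$. Thus $R(\tau) \neq \eset$ if and only if there is \emph{no} nontrivial nonnegative dependence among the $\bx_\tau(e)$. The remaining task is to match nonnegative dependences with frame circuits that have no source and no sink. Given such a dependence, choose one with inclusion-minimal support $S = \{e : c_e > 0\}$; then $\bx_\tau(S)$ is minimally dependent, so by Corollary \ref{C:mindep} $S$ is a frame circuit $C$ and every $c_e > 0$. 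Reading off the $v$-coordinate of $\sum_{e\in C} c_e\bx_\tau(e) = \mathbf 0$ at any $v \in V(C)$ gives $\sum_e c_e \eta_{ve} = 0$; if $v$ were a source then every incidence has $\tau(v,e) = +$, so each $\eta_{ve} \ge 0$ with at least one strictly positive, forcing the sum to be positive --- a contradiction, and symmetrically for a sink. Hence $C$ has no source or sink, so $\tau$ is not acyclic.

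Conversely, if $\tau$ is not acyclic, some frame circuit $C$ has neither source nor sink; I would show that its minimal dependence, which exists and has all coefficients nonzero by Corollary \ref{C:mindep}, can be scaled to be strictly positive, yielding a nonnegative dependence and hence $R(\tau) = \eset$. This is the step I expect to be the main obstacle. The mechanism is the coordinate relation above: at a degree-two vertex $v$ of $C$ with incident links $e,e'$ one gets $c_e\eta_{ve} + c_{e'}\eta_{ve'} = 0$, so (since $\eta_{ve},\eta_{ve'}\in\{+1,-1\}$) the signs of consecutive coefficients agree precisely when $\eta_{ve} = -\eta_{ve'}$, i.e.\ when $v$ is neither a source nor a sink. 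Traversing a circle, the coefficient sign is therefore constant exactly when no source or sink occurs along it; for the handcuff circuits one runs this argument separately along the connecting path and around each negative circle and checks consistency at the trivalent junction vertices. Since $C$ has no source or sink at all, the signs never flip, the coefficients are all of one sign, and a global rescaling makes them positive, as required.

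Part (b) is then short. Let $R$ be a region. Since $R$ is connected and disjoint from each $h_e$, the sign of $\bx(e)\cdot\bx$ is a constant $\epsilon_e \in \{+,-\}$ on $R$; replacing $\bx(e)$ by $\epsilon_e\bx(e)$ amounts, by the orientation correspondence of Section \ref{vectors}, to a well-defined orientation $\tau$ of $\Sigma$. The intersection $\bigcap_e \{\bx : \epsilon_e\bx(e)\cdot\bx > 0\}$ equals $R(\tau)$; it is an open convex cone containing $R$, and being disjoint from every $h_e$ it lies in a single region, so it coincides with $R$. Thus $R = R(\tau)$ is nonempty, and part (a) shows $\tau$ is acyclic.
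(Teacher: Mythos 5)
The paper does not actually prove Theorem \ref{T:regions}: it defers entirely to Zaslavsky (1991b) with the remark that the two proofs given there ``are not short,'' so there is no in-paper argument to measure yours against. Your proposal is a legitimate, self-contained route. Rewriting $R(\tau)$ as $\{\bx\in\bbR^n : \bx_\tau(e)\cdot\bx>0 \text{ for all } e\}$ and invoking Gordan's theorem of the alternative correctly reduces part (a) to the combinatorial claim that a frame circuit supports an all-positive dependence among its edge vectors exactly when the orientation gives it no source and no sink, and your part (b) (constancy of the sign of $\bx(e)\cdot\bx$ on a region, plus convexity of $R(\tau)$) is clean and complete. Two steps need more than assertion. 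First, that a nonnegative dependence of inclusion-minimal support is supported on a matroid circuit is true but requires the standard elimination argument (subtract a suitable multiple of the full-support circuit dependence to kill one coefficient while keeping the others nonnegative, contradicting minimality); only then does Corollary \ref{C:mindep} apply. Second, the handcuff case of the converse---which you rightly flag as the main obstacle---does go through, but the junction analysis is exactly where the negativity of the constituent circles enters: propagating the degree-two relation around a negative circle $D$ of the handcuff forces the two $D$-ends at the junction vertex to point the same way whenever no interior vertex of $D$ is a source or sink (this is the consistency condition $\sigma(D)=-1$), so the coordinate equation at the junction collapses to $2a\eta + p\eta' = 0$; this both explains the familiar factor of $2$ on the connecting-path coefficients and shows the coefficient sign is preserved precisely when the junction is not a source or sink. (Also note the junction of a tight handcuff is tetravalent, not trivalent, and a half edge or negative loop standing in for a negative circle needs its own one-line check.) With those two points written out, the argument is correct.
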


Zaslavsky (1991b) gives two proofs.  They are not short.

\section{Coloring}\label{col}

We color a signed graph from a color set
$$
\Lambda_k := \{ \pm 1,\pm 2,\ldots,\pm k\} \cup \{0 \}
$$  
or a zero-free color set
$$
\Lambda_k^*:=\Lambda_k \backslash \{0 \}=\{ \pm 1,\pm 2,\ldots,\pm k\}.
$$  
A \emph{$k$-coloration} (or \emph{$k$-coloring}) of $\Sigma$ is a function $\gamma: V \to \Lambda_k$.  A coloration is \emph{zero free} if it does not use the color $0$.  Coloring of signed graphs comes from Zaslavsky (1982b, 1982c).  

A coloration $\gamma$ is \emph{proper} if it satisfies all the properties
$$
\begin{cases}
\gamma(v_j) \neq \sigma(e) \gamma(v_i), &\text{ for a link or loop $e$ with endpoints } v_i, v_j, \\
\gamma(v_i) \neq 0,	&\text{ for a half edge $e$ with endpoint } v_i,
\end{cases}
$$
and there are no loose edges.  Note that these conditions for a proper coloration are opposite to the equations of the hyperplanes $h_k$.  Also note that if $\Sigma$ has a positive loop or loose edge, there are no proper colorations at all.


\subsection{Chromatic polynomials}\label{poly}\

A signed graph has two chromatic polynomials.  For an integer $k\geq0$, define 
$$
\chi_{\Sigma}(2k+1):= \text{ the number of proper $k$-colorations,} 
$$
and 
$$
\chi_{\Sigma}^*(2k):= \text{ the number of proper zero-free $k$-colorations.}  
$$
The polynomials are identically zero if and only if $\Sigma$ has a positive loop or a loose edge.

One may wonder why the variables are $2k+1$ and $2k$ and not $k$; there are several reasons, amongst which Theorem \ref{T:polynomial} and Lemma \ref{T:sgcharacteristic} and the geometrical analysis mentioned in Section \ref{geomcol} are especially important.

\subsubsection{Basic properties}\

The chromatic polynomials have fundamental properties in common with ordinary graphs.

\begin{thm}\label{T:chiprops}
The chromatic polynomials have the properties of
\begin{enumerate}[{\rm(1)}]
\item Unitarity: $$\chi_{\emptyset}(2k+1)=1=\chi_{\emptyset}^*(2k) \text{ for all } k \geq 0.$$
\item Switching Invariance:  If\/ $\Sigma \sim \Sigma'$, then 
$$\chi_{\Sigma}(2k+1)=\chi_{\Sigma'}(2k+1) \quad\text{and}\quad \chi_{\Sigma}^*(2k)=\chi_{\Sigma'}^*(2k).$$

\item Multiplicativity: If\/ $\Sigma$ is the disjoint union of\/ $\Sigma_1$ and $\Sigma_2$, then 
$$\chi_\Sigma(2k+1)=\chi_{\Sigma_1}(2k+1)\chi_{\Sigma_2}(2k+1) \quad\text{and}\quad \chi_\Sigma^*(2k)=\chi_{\Sigma_1}^*(2k)\chi_{\Sigma_2}^*(2k).$$

\item Deletion-Contraction: If\/ $e$ is a link, a positive loop, or a loose edge, then 
$$
\chi_{\Sigma}(2k+1)=\chi_{\Sigma \setm e}(2k+1)-\chi_{\Sigma / e}(2k+1)
$$
and
$$
\chi_{\Sigma}^*(2k)=\chi_{\Sigma \setm e}^*(2k)-\chi_{\Sigma / e}^*(2k).
$$
\end{enumerate}
\end{thm}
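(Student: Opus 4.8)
The plan is to verify each of the four identities as an equality of the defining counts, separately at every integer $k \geq 0$; this is exactly what the four displayed equations assert. Unitarity is immediate: the empty graph has a single coloration, the empty function into $\Lambda_k$, and it is vacuously proper, so $\chi_\emptyset(2k+1) = 1$, and likewise $\chi_\emptyset^*(2k) = 1$. Multiplicativity is almost as quick: when $\Sigma$ is the disjoint union of $\Sigma_1$ and $\Sigma_2$, a coloration of $\Sigma$ is precisely a pair $(\gamma_1,\gamma_2)$, and since every edge lies wholly within one part, $\gamma$ is proper exactly when both $\gamma_1$ and $\gamma_2$ are; counting gives the product, and the argument reads the same for zero-free colorations.

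For switching invariance I would produce an explicit bijection between the colorations of $\Sigma$ and those of $\Sigma^\zeta$. Given $\gamma$, set $\gamma^\zeta(v) := \zeta(v)\gamma(v)$, reading $\zeta(v) \in \{+1,-1\}$; multiplication by $\pm 1$ permutes $\Lambda_k$ and fixes $0$, so this is a bijection preserving zero-freeness. The one computation needed is that for a link or loop $e = v_iv_j$ one has $\sigma^\zeta(e)\gamma^\zeta(v_i) = \sigma(e)\zeta(v_j)\gamma(v_i)$, whence cancelling the common factor $\zeta(v_j)$ turns the constraint $\gamma^\zeta(v_j) \neq \sigma^\zeta(e)\gamma^\zeta(v_i)$ into $\gamma(v_j) \neq \sigma(e)\gamma(v_i)$; the half-edge and loose-edge conditions are untouched by switching. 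Hence $\gamma$ is proper for $\Sigma$ iff $\gamma^\zeta$ is proper for $\Sigma^\zeta$, the counts agree, and since every switching equivalence $\Sigma \sim \Sigma'$ is realized by some $\zeta$, this is the claim for both polynomials.

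Deletion-contraction is the substantive case, and I expect the link subcase to be the main obstacle. The loose-edge and positive-loop subcases settle at once: such an $e$ forces $\chi_\Sigma = 0 = \chi_\Sigma^*$, while contraction merely deletes $e$, so $\Sigma/e = \Sigma \setm e$ and the right-hand side is $\chi_{\Sigma\setm e} - \chi_{\Sigma\setm e} = 0$. For a link $e = v_iv_j$ I would first reduce to the positive case: if $e$ is negative, pick $\zeta$ making $e$ positive in $\Sigma^\zeta$; switching commutes with deletion, and by Lemma~\ref{L:contractionequiv} it commutes with contraction up to switching class, so by the switching invariance just proved all three terms are unchanged on passing to $\Sigma^\zeta$. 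For positive $e$, I would partition the proper colorations of $\Sigma \setm e$ by whether $\gamma(v_i) = \gamma(v_j)$: those with $\gamma(v_i) \neq \gamma(v_j)$ are exactly the proper colorations of $\Sigma$, since the only extra constraint imposed by $\Sigma$ is that of $e$.

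The heart of the matter is the remaining class. Identifying $v_i$ with $v_j$ into the vertex $w$ of $\Sigma/e$ should send the proper colorations of $\Sigma \setm e$ with $\gamma(v_i) = \gamma(v_j)$ bijectively onto the proper colorations of $\Sigma/e$, and the care lies in checking properness edge by edge under this identification: an edge meeting $v_i$ or $v_j$ reads its constraint against the merged color $\gamma'(w) = \gamma(v_i) = \gamma(v_j)$, while an edge parallel to $e$ becomes a loop at $w$ whose condition $\gamma'(w) \neq \sigma(f)\gamma'(w)$ is precisely the $\Sigma \setm e$ condition specialized to $\gamma(v_i) = \gamma(v_j)$ (a positive such parallel edge annihilates both counts, a negative one imposes $\gamma'(w) \neq 0$). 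This yields $\chi_{\Sigma\setm e} = \chi_\Sigma + \chi_{\Sigma/e}$, hence the stated formula; the identical argument over $\Lambda_k^*$ gives the starred identity, the color $0$ playing no special role in the identification.
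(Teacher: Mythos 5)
Your proposal is correct and follows essentially the same route as the paper's own (outline of) proof: handle deletion--contraction by switching a negative link to positive and then partitioning the proper colorations of $\Sigma \setm e$ according to whether the endpoints of $e$ receive equal or unequal colors, with the other three properties dispatched by direct counting and the coloration-negation bijection for switching. In fact your write-up supplies more detail (the parallel-edge and positive-loop/loose-edge checks) than the paper, which defers the complete argument to Zaslavsky (1982b).
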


\begin{proof}[Outline of Proof]
The hard part is the deletion-contraction property.  The proof is similar to the usual proof for ordinary graphs by counting proper colorations of $\Sigma\setm e$.

If $e$ is a link, first switch so it is positive.  
(Switching modifies a coloration by negating the color of every switched vertex.  The switched coloration is proper in the switched graph if and only if the original coloration was proper in the original graph.)
Then a proper coloration of $\Sigma\setm e$ gives unequal colors to the endpoints of $e$ and is a proper coloration of $\Sigma$, or it gives the same color to the endpoints and it corresponds to a proper coloration of $\Sigma/e$.  If $e$ is a half edge or a negative loop, there are two cases depending on whether the endpoint gets a nonzero color or is colored 0.

A complete proof is in Zaslavsky (1982b).
\end{proof}

These properties make it possible to prove explicit formulas which demonstrate that the chromatic polynomials are indeed polynomials.

\begin{thm}\label{T:polynomial}
$\chi_{\Sigma}(\lambda)$ is a polynomial function of\/ $\lambda=2k+1>0$; specifically, 
\begin{equation}\label{E:setexpansion}
\chi_{\Sigma}(\lambda) = \sum_{S \subseteq E} (-1)^{|S|} \lambda^{b(S)}.
\end{equation}
Also, $\chi_{\Sigma}^*(\lambda)$ is a polynomial function of\/ $\lambda=2k\geq0$.  Specifically,
\begin{equation}\label{E:balsetexpansion}
\chi^*_{\Sigma}(\lambda) = \sum_{S \subseteq E: \text{balanced}} (-1)^{|S|} \lambda^{b(S)}.
\end{equation}
\end{thm}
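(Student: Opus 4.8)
The plan is to prove both identities at once by inclusion--exclusion (a Whitney-style subset expansion), running the two color sets $\Lambda_k$ and $\Lambda_k^*$ in parallel so that the sole difference between them---whether $0$ is an available color---produces the difference between the two formulas. First I would dispose of the degenerate cases: if $\Sigma$ has a loose edge or a positive loop there is no proper coloration, so the left sides vanish, and the right sides vanish too, since pairing each $S$ omitting such an edge $e_0$ with $S\cup\{e_0\}$ yields two terms of opposite sign and equal $b$-value (a loose edge is ignored in counting components, and a positive loop alters neither the component partition nor balance). Hence I may assume every edge is a link, a negative loop, or a half edge.

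For each edge $e$ let $A_e$ be the \emph{bad} event that the properness condition fails at $e$: for a link or loop $e=v_iv_j$ it is $\gamma(v_j)=\sigma(e)\gamma(v_i)$, and for a negative loop or half edge at $v_i$ it is $\gamma(v_i)=0$ (a negative loop and a half edge impose the very same condition, as the notes repeatedly remark). A coloration is proper exactly when it avoids every $A_e$, so inclusion--exclusion gives
$$
\chi_\Sigma=\sum_{S\subseteq E}(-1)^{|S|}N(S),
$$
where $N(S)$ is the number of colorations satisfying $A_e$ for every $e\in S$; the same identity holds for $\chi_\Sigma^*$ with colorations drawn from $\Lambda_k^*$.

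The heart of the argument is the evaluation of $N(S)$. Because the conditions belonging to distinct components of $(V,S)$ involve disjoint vertex sets, $N(S)$ factors as $\prod_C n(C)$ over the components $C$ of $(V,S)$, where $n(C)$ counts the admissible colorations of $C$. If $C$ is balanced, Corollary \ref{C:swbalance} (resting on Harary's Balance Theorem \ref{T:balance}) provides a switching function $\zeta$ making every edge of $C$ positive; the substitution $\gamma(v)=\zeta(v)\delta(v)$ converts all bad conditions into ``$\delta$ is constant on $C$,'' so admissible colorations correspond bijectively to choices of that constant, giving $n(C)=2k+1=\lambda$ (and $n(C)=2k=\lambda$ in the zero-free case, where the constant must avoid $0$). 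If $C$ is unbalanced, then a half edge or a negative circle forces $\gamma(v)=0$ at some vertex, and propagating this along paths forces $\gamma\equiv0$ throughout $C$; this unique all-zero coloring is admissible precisely when $0$ is an allowed color, so $n(C)=1$ for $\chi_\Sigma$ and $n(C)=0$ for $\chi_\Sigma^*$. Assembling the factors: with $0$ available one gets $N(S)=\lambda^{b(S)}$, each balanced component contributing $\lambda$ and each unbalanced one $1$, which is \eqref{E:setexpansion}; without $0$ a single unbalanced component annihilates the product, so $N(S)=\lambda^{b(S)}$ for balanced $S$ and $N(S)=0$ otherwise, which is \eqref{E:balsetexpansion}.

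The step I expect to be the main obstacle is the per-component count $n(C)$. For balanced components one must justify the switching reduction---this is exactly where the switching-invariance of the coloration count, noted in the proof of Theorem \ref{T:chiprops}, carries the weight---and for unbalanced components one must argue carefully that a single forced zero propagates to the entire component. Everything else is bookkeeping, and the pleasant feature of the proof is that the lone dichotomy ``is $0$ an allowed color?'' turns one uniform computation into the two advertised formulas.
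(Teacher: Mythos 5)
Your proof is correct, but it follows a genuinely different route from the paper. The paper proves Theorem \ref{T:polynomial} in one line by invoking Theorem \ref{T:chiprops} (unitarity, switching invariance, multiplicativity, and deletion--contraction) together with induction on $|E|$ and $n$: one checks that the subset expansions satisfy the same recurrence and initial conditions as the counting functions. You instead give a direct inclusion--exclusion (Whitney-style) argument: one bad event per edge, $N(S)=\prod_C n(C)$ over components of $(V,S)$, with $n(C)=\lambda$ for balanced components (via switching to all-positive, Corollary \ref{C:swbalance}) and $n(C)=1$ or $0$ for unbalanced components according as $0$ is or is not an allowed color. Your key computations check out: the zero-propagation argument in an unbalanced component is sound (a half edge, negative loop, or negative circle forces a zero, and the link conditions $\gamma(v_j)=\pm\gamma(v_i)$ spread it across the connected component), the sign-reversing involution disposing of loose edges and positive loops is valid since neither edge type changes $b(S)$ or balance, and isolated vertices are correctly absorbed as balanced components contributing $\lambda$. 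What the paper's route buys is brevity given machinery already in place --- but that machinery includes the contraction of negative loops and half edges, the most delicate part of signed-graph deletion--contraction. What your route buys is a self-contained proof that evaluates both polynomials at once, explains combinatorially why the zero-free sum restricts to balanced $S$ (an unbalanced component annihilates $N(S)$ exactly when $0$ is unavailable), and makes transparent why the natural variables are $2k+1$ and $2k$. This is essentially the argument of Zaslavsky (1982b), so it is a legitimate alternative rather than a shortcut around a real difficulty.
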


\begin{proof}
Apply Theorem \ref{T:chiprops} and induction on $|E|$ and $n$.
\end{proof}

Therefore, we can extend the range of $\lambda$ to all of $\bbR$.  In particular, we can evaluate $\chi_\Sigma(-1)$.  

\subsubsection{A geometrical application of the chromatic polynomial}\

This lets us draw an important connection between geometry and coloring of a signed graph.

\begin{lem}\label{T:sgcharacteristic}
$\chi_\Sigma(\lambda) = p_{\cH[\Sigma]}(\lambda).$
\end{lem}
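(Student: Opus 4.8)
The plan is to compare the two polynomials term by term through the natural correspondence between edges and hyperplanes. By the definition of $\cH[\Sigma]$, each edge $e_k$ of $\Sigma$ produces exactly one hyperplane $h_k$, so the assignment $S \mapsto \cS := \{h_k : e_k \in S\}$ is a bijection between the subsets of $E$ and the subarrangements of $\cH[\Sigma]$, and it evidently preserves cardinality, $|S| = |\cS|$. The whole proof will then be the observation that this bijection matches the two defining sums.

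The heart of the matter is to identify the exponents appearing in those sums. On the coloring side, Theorem \ref{T:polynomial} supplies the set expansion \eqref{E:setexpansion}, whose generic term is $(-1)^{|S|}\lambda^{b(S)}$. On the geometry side, the characteristic polynomial is defined in \eqref{E:charpoly} with generic term $(-1)^{|\cS|}\lambda^{\dim\cS}$, where $\dim\cS = \dim\bigcap_{h_k \in \cS} h_k$. Lemma \ref{L:hypdim} asserts precisely that $\dim\bigcap\cS = b(S)$ for corresponding $S$ and $\cS$; combined with $|S| = |\cS|$, this makes each term of one expansion equal to the matching term of the other. Summing over all subsets turns \eqref{E:setexpansion} into \eqref{E:charpoly}, i.e.\ $\chi_\Sigma(\lambda) = p_{\cH[\Sigma]}(\lambda)$.

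I do not anticipate a genuine obstacle, since the real content has already been isolated in Lemma \ref{L:hypdim} (itself the vector-space dual of the rank formula, Theorem \ref{T:rank}). The only points deserving a second glance are the boundary cases, and Lemma \ref{L:hypdim} absorbs them uniformly. For $S = \eset$ one needs $b(\eset) = n = \dim\bbR^n$, which holds because the edgeless graph has $n$ balanced isolated-vertex components while the empty intersection is all of $\bbR^n$. For a positive loop or loose edge the associated hyperplane is degenerate, $h_k = \bbR^n$, and the corresponding edge leaves $b$ unchanged; hence in each expansion the offending terms cancel in pairs and both sides vanish identically, in agreement with the fact that $\chi_\Sigma$ and $p_{\cH[\Sigma]}$ are zero exactly when such an edge is present. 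Thus the term-by-term identification holds without exception, and no further case analysis is required.
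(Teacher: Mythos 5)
Your proof is correct and follows essentially the same route as the paper: compare the set expansion \eqref{E:setexpansion} with the definition \eqref{E:charpoly} under the edge--hyperplane correspondence and invoke Lemma \ref{L:hypdim} to equate $b(S)$ with $\dim\cS$. The extra discussion of boundary cases is a harmless sanity check but not needed, since the term-by-term identification already covers them.
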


\begin{proof}
Compare the summation expressions, \eqref{E:setexpansion} and \eqref{E:charpoly}, for the two polynomials, and note that by Lemma \ref{L:hypdim} $b(S) = \dim\cS$ if $\cS \subseteq \cH[\Sigma]$ corresponds to the edge set $S$.
\end{proof}

\begin{thm}\label{T:sgregions}
The number of acyclic orientations of\/ $\Sigma$ and the number of regions of\/ $\cH[\Sigma]$ are both equal to $(-1)^n \chi_\Sigma(-1)$.
\end{thm}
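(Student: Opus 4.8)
The plan is to prove the two equalities separately, each by chaining together results already in hand. For the number of regions I would start from the region-counting formula of Theorem \ref{T:futa}, which gives $r(\cH[\Sigma]) = (-1)^n p_{\cH[\Sigma]}(-1)$, and then substitute the identity of Lemma \ref{T:sgcharacteristic}, namely $p_{\cH[\Sigma]}(\lambda) = \chi_\Sigma(\lambda)$, evaluated at $\lambda = -1$. This yields $r(\cH[\Sigma]) = (-1)^n \chi_\Sigma(-1)$ with no further work.

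For the number of acyclic orientations, the strategy is to show that $\tau \mapsto R(\tau)$ restricts to a \emph{bijection} from the acyclic orientations of $\Sigma$ onto the regions of $\cH[\Sigma]$; the count of acyclic orientations then equals $r(\cH[\Sigma])$, and the first part finishes the proof. Theorem \ref{T:regions} already supplies most of this: part (a) says $R(\tau)$ is a nonempty region exactly when $\tau$ is acyclic, so the map is well defined on acyclic orientations and lands among the regions, while part (b) gives surjectivity. What remains is injectivity, and this is the one step I expect to require genuine care — Theorem \ref{T:regions} is phrased as a surjection-with-nonemptiness, so the bijectivity must be confirmed rather than merely quoted.

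For injectivity I would argue that the region $R(\tau)$ determines $\tau$. Recall from the closing remark of Section \ref{vectors} that an orientation is recovered from its edge vectors $\bx(e)$, the only ambiguity being at positive loops, and that each edge admits exactly two sign-compatible orientations, producing $\bx(e)$ and $-\bx(e)$; moreover $R(\tau) = \{\bx : \bx(e)\cdot\bx > 0 \text{ for all } e\}$ with the $\tau$-determined vectors. If $\tau_1$ and $\tau_2$ are acyclic with $R(\tau_1) = R(\tau_2)$, then for each edge $e$ the vectors $\bx_{\tau_1}(e)$ and $\bx_{\tau_2}(e)$ span the common hyperplane $h_e$, so $\bx_{\tau_2}(e) = \pm\bx_{\tau_1}(e)$; since both are positive on the common nonempty open region, the sign must be $+$, whence $\bx_{\tau_1}(e) = \bx_{\tau_2}(e)$. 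Applying the recovery of the orientation from its vectors edge by edge gives $\tau_1 = \tau_2$.

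Finally I would dispose of the degenerate case in one line. If $\Sigma$ has a positive loop or a loose edge, then $\chi_\Sigma \equiv 0$, the arrangement contains the degenerate hyperplane so $r(\cH[\Sigma]) = 0$, and the offending edge contributes the unsatisfiable inequality $0 > 0$, forcing $R(\tau) = \eset$ for every $\tau$; by Theorem \ref{T:regions}(a) there are then no acyclic orientations, all three quantities equal $0$, and the injectivity step (where the positive-loop ambiguity would otherwise bite) is vacuous. Thus in every case the number of acyclic orientations, the number of regions, and $(-1)^n\chi_\Sigma(-1)$ coincide.
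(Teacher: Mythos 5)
Your proof is correct and follows exactly the route the paper intends: the theorem is stated without proof as an immediate consequence of chaining Theorem \ref{T:futa} with Lemma \ref{T:sgcharacteristic} to get $r(\cH[\Sigma])=(-1)^n\chi_\Sigma(-1)$, and of the bijection between acyclic orientations and regions supplied by Theorem \ref{T:regions}. Your explicit injectivity argument (recovering $\tau$ edge by edge from the normal vectors positive on the common region) and your disposal of the degenerate positive-loop/loose-edge case are sound supplements to details the paper leaves implicit.
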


\subsubsection{Computational methods}\

To compute the chromatic polynomial it is often easiest to get several zero-free polynomials first and use

\begin{thm}[Zero-Free Expansion Identity]\label{T:expansion}
The chromatic and zero-free chromatic polynomials are related by
$$\chi_\Sigma(\lambda) = \sum_{W \subseteq V: \,\text{stable}} \chi^*_{\Sigma\setm W}(\lambda-1).$$
\end{thm}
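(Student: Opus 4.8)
The plan is to prove the identity first as a numerical equality at each admissible integer value and then promote it to an identity of polynomials. Fix $k \geq 0$ and recall that $\chi_\Sigma(2k+1)$ counts the proper colorations $\gamma\colon V \to \Lambda_k$, while $\chi^*_{\Sigma\setm W}(2k)$ counts the proper zero-free colorations of $\Sigma\setm W$ with colors in $\Lambda_k^*$. Since $\lambda - 1 = 2k$ when $\lambda = 2k+1$, it suffices to establish
\[
\chi_\Sigma(2k+1) = \sum_{W\subseteq V:\,\text{stable}} \chi^*_{\Sigma\setm W}(2k)
\]
for every $k \geq 0$: both sides are polynomials in $\lambda$ by Theorem \ref{T:polynomial}, and agreement at the infinitely many points $\lambda = 1, 3, 5, \ldots$ forces equality of the polynomials.

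To establish the numerical identity I would partition the proper colorations of $\Sigma$ by their zero-set. Given a proper coloration $\gamma$, let $W := \gamma\inv(0)$. The first step is to observe that $W$ is stable: a loop or half edge with endpoint in $W$ would violate properness (for a loop $e$ at $v$ we would need $0 = \gamma(v) \neq \sigma(e)\gamma(v) = 0$, and a half edge at $v$ demands $\gamma(v)\neq 0$), while a link with both endpoints in $W$ would require $0 \neq \sigma(e)\cdot 0 = 0$. Thus $\gamma \mapsto \gamma\inv(0)$ lands among the stable sets, and the colorations of $\Sigma$ split according to these $W$.

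The heart of the argument is counting the proper colorations with a prescribed zero-set $W$, and I claim these correspond bijectively to the proper zero-free colorations of $\Sigma\setm W$. The forward map restricts $\gamma$ to $V\setm W$; the inverse extends a zero-free coloration by assigning $0$ to every vertex of $W$, so that the zero-set of the extension is exactly $W$. What must be verified—and the step I expect to require the most care—is that no properness condition is gained or lost. Edges with all endpoints in $W$ do not exist, since $W$ is stable; loops and half edges never cross, so they lie inside $V\setm W$, and there the two properness requirements coincide because every color used is nonzero. The only genuinely crossing case is a link $e = v_iv_j$ with, say, $v_i\in W$ and $v_j\notin W$: here properness in $\Sigma$ demands $\gamma(v_j)\neq \sigma(e)\gamma(v_i) = \sigma(e)\cdot 0 = 0$, which holds automatically since $v_j\notin W$. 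Hence crossing edges impose no constraint, the bijection is exact, and each stable $W$ contributes $\chi^*_{\Sigma\setm W}(2k)$ colorations.

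Summing over all stable $W$ yields the displayed numerical identity, and the polynomial conclusion follows as above. The only cases needing a separate sanity check are the degenerate ones (a positive loop or a loose edge), where both sides vanish identically: such an edge survives in $\Sigma\setm W$ for every stable $W$—a positive loop because its vertex must avoid $W$, a loose edge because it has no endpoints—so each $\chi^*_{\Sigma\setm W}$ is zero, matching $\chi_\Sigma \equiv 0$.
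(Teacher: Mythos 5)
Your proposal is correct and follows essentially the same route as the paper: partition the proper colorations by their zero-set $W$, observe that $W$ must be stable, and biject the colorations with zero-set $W$ onto the zero-free proper colorations of $\Sigma\setm W$, then extend from the integer evaluations to a polynomial identity. The extra care you take with crossing links, loops, half edges, and the degenerate cases is exactly the content of the paper's phrase ``this construction is reversible.''
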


\begin{proof}
Let $\lambda = 2k+1$.  For each proper $k$-coloration $\gamma$ there is a set $W := \{v \in V: \gamma(v) = 0\}$, which must be stable.  The restricted coloration $\gamma|_{V\setm W}$ is a zero-free proper $k$-coloration of $\Sigma \setm W$.  This construction is reversible.
\end{proof}

\begin{exam}\label{X:0free-exp}
A signed complete graph is $\Sigma = (K_n,\sigma)$.  In the zero-free expansion the stable vertex sets are $\eset$ and also $\{v\}$ for each $v \in V$.  Thus, 
$$\chi_\Sigma(\lambda) = \chi^*_\Sigma(\lambda-1) + \sum_{v \in V} \chi^*_{\Sigma\setm v}(\lambda-1).$$
\end{exam}

\subsubsection{A geometry of coloring}\label{geomcol}\

Beck and Zaslavsky (2006a) explain why there are two chromatic polynomials of a signed graph when one is enough for ordinary graphs.  
They take a geometrical approach that views colorations in connection with the hyperplane arrangement $\cH[\Sigma]$.  Colorations correspond to certain half-integral points in $\bbR^n$ and proper colorations correspond to points that are not in a translation of $\bigcup \cH[\Sigma]$.  The two chromatic polynomials are then a natural consequence of Ehrhart's theory of counting lattice points in polytopes (for which see \cite{CCD}, for instance).

\subsection{Chromatic numbers}\label{nos}\

The \emph{chromatic number} of $\Sigma$ is 
$$
\chi(\Sigma) := \min\{ k : \exists \text{ a proper $k$-coloration}\}, 
$$
and the \emph{zero-free chromatic number} is 
$$
\chi^*(\Sigma) := \min\{ k : \exists \text{ a zero-free proper $k$-coloration}\}.
$$
Thus, $\chi(\Sigma) = \min\{ k\geq0 : \chi_{\Sigma}(2k+1) \neq 0\}$ and $\chi^*(\Sigma) = \min\{ k\geq0 : \chi^*_{\Sigma}(2k) \neq 0\}$.

Almost any question about chromatic numbers of signed graphs is open.  The little I know about the graphs with a given value of a chromatic number is in Zaslavsky (1984a), where I studied complete signed graphs with largest or smallest zero-free chromatic number.  
Zaslavsky (1987b) also concerns chromatic number, disguised as a signed generalization of graph biparticity called `balanced decomposition number', as the balanced decomposition number of $\Sigma$ equals $\lceil \log_2 \chi^*(-\Sigma) \rceil + 1$.

\section{A Catalog of Examples}\label{x}\

We apply the theory to some of the simpler ways one can derive a signed graph from an ordinary graph.  This gives us several general examples.  

The chromatic polynomial and the chromatic number of $\Gamma$ are, respectively, $\chi_\Gamma(\lambda)$ and $\chi(\Gamma)$.
For the geometrical aspects we need the standard basis vectors of $\bbR^n$: 
$$\B_1=(1,0,\ldots,0),\ \B_2=(0,1,0,\ldots,0),\ \ldots,\ \B_n=(0,\ldots,0,1).$$

\subsection{Full signed graphs}\label{full}\

In this example 
\begin{quote}
\quad $\Sigma$ is a signed graph with no half or loose edges or negative loops, 
\par $\Sigma\full$ is $\Sigma$ with a half edge at every vertex, and 
\par $\Sigma^\circ$ is $\Sigma$ with a negative loop at every vertex.  
\end{quote}
Whether a half edge or negative loop is added makes no almost difference, because each is an unbalanced edge.  Write $f_i$ for the unbalanced edge added to $v_i$.
\begin{itemize}
\item \emph{Balance}: The balanced subgraphs in $\Sigma\full$ are the same as those of $\Sigma$.

\item \emph{Closed sets}: An edge set in $\Sigma\full$ is closed if and only if it consists of the induced edge set $E(\Sigma\full){:}W$ together with a balanced, closed subset of $E(\Sigma){:}W^c$, for some vertex set $W \subseteq V$.  $\Sigma^\circ$ is similar.  
\par Different choices of $W$ give different edge sets (which may not be the case when the signed graph is not full).

\item \emph{Vectors}: 
\par $\bx(E(\Sigma\full))$ adds to $\bx(E(\Sigma))$ the unit basis vectors $\B_i$.  
\par $\bx(E(\Sigma^\circ))$ adds the vectors $2\B_i$ instead.  (That is the only difference we see in the two kinds of full graph.)

\item \emph{Incidence matrix}:  The columns are the vectors $\bx(E)$.  Thus, 
\par $\Eta(\Sigma\full)$ is $\Eta(\Sigma)$ with the columns of an identity matrix $I_n$ adjoined.
\par $\Eta(\Sigma^\circ)$ is $\Eta(\Sigma)$ with the columns of $2I_n$ adjoined.

\item \emph{Hyperplane arrangement}:  $\cH[\Sigma\full] = \cH[\Sigma^\circ]$, and both equal $\cH[\Sigma]$ together with all the coordinate hyperplanes $x_i = 0$. 

\item \emph{Chromatic polynomials}: 
\par $\chi^*_{\Sigma\full}(\lambda) = \chi^*_{\Sigma^\circ}(\lambda) = \chi^*_\Sigma(\lambda)$.  \par
$\chi_{\Sigma\full}(\lambda) = \chi_{\Sigma^\circ}(\lambda) = \chi^*_\Sigma(\lambda-1)$ by Theorem \ref{T:expansion}, since the only stable set is $W = \eset$.

\item \emph{Chromatic numbers}:  $\chi(\Sigma\full) = \chi(\Sigma^\circ) = \chi^*(\Sigma)$ since the unbalanced edges prevent the use of color 0.
\end{itemize}

\subsection{All-positive signed graphs}\label{allpos}\

In this example we assume $\Gamma$ is a graph with no half or loose edges.  $+\Gamma$ has almost exactly the same properties as its underlying graph.  
\begin{itemize}
\item \emph{Balance}: Every subgraph is balanced.  $b(S) = c(S)$ for all $S \subseteq E$.

\item \emph{Closed sets}: $S$ is closed $\iff$ every edge whose endpoints are connected by $S$ is in $S$. 
\par Closure in $+\Gamma$ is identical to the usual closure in $\Gamma$, and the closed sets in $+\Gamma$ are the same as in $\Gamma$.

\item \emph{Vectors}:  If $e$ has endpoints $v_i,v_j$, then $\bx(e) = \pm(\B_j-\B_i)$. 
\par All $\bx(e) \in$ the subspace $x_1+\cdots+x_n=0$.  

When $\Gamma = K_n$, if one takes both signs the set of vectors is the classical root system 
$$A_{n-1} := \{ \B_j - \B_i : i,j \leq n,\ i \neq j \}.$$
Thus, $\bx(E)$ for any graph is a subset of $A_{n-1}$.

\item \emph{Incidence matrix}:  $\Eta(+\Gamma)$ is the `oriented incidence matrix' of $\Gamma$.

\item \emph{Hyperplane arrangement}:  If $e_k$ has endpoints $v_i,v_j$, then $h_k$ has equation $x_i=x_j$. 
\par All hyperplanes $h_k$ contain the line $x_1=\cdots=x_n$.  

Take $\Gamma = K_n$; then $\cH[+K_n] = \cA_{n-1}$, the hyperplane arrangement dual to $A_{n-1}$.

\item \emph{Chromatic polynomials}: $\chi_{+\Gamma}(\lambda) = \chi^*_{+\Gamma}(\lambda) = \chi_{\Gamma}(\lambda)$, the chromatic polynomial of $\Gamma$.

\item \emph{Chromatic numbers}:  $\chi(+\Gamma) = \lfloor\tfrac12\chi(\Gamma)\rfloor$ and $\chi^*(+\Gamma) = \lceil\tfrac12\chi(\Gamma)\rceil$. 
\end{itemize}

\subsection{All-positive, full signed graphs}\label{allposfull}\

The signed graph $+\Gamma\full$ is closely related to $\Gamma+v_0$, which consists of $\Gamma$ and an extra vertex $v_0$ adjacent to all of $V$ by edges $v_0v_i$.  There is a natural bijection $\alpha: E(+\Gamma\full) \to E(\Gamma+v_0)$ by $\alpha(e) := e$ if $e \in E(\Gamma)$ and $\alpha(f_i) := v_0v_i$.
\begin{itemize}
\item \emph{Balance}: $S$ is balanced if and only if $\alpha(S)$ does not contain any edges at $v_0$.

\item \emph{Closed sets}: $S$ is closed $\iff$ $\alpha(S)$ is closed in $\Gamma+v_0$.

\item \emph{Chromatic polynomials}: $\chi_{+\Gamma\full}(\lambda) = \chi_{\Gamma+v_0}(\lambda) = \chi_{\Gamma}(\lambda-1)$.

\item \emph{Chromatic numbers}:  $\chi(+\Gamma\full) = \lfloor\tfrac12\chi(\Gamma)\rfloor$ and $\chi^*(+\Gamma\full) = \lceil\tfrac12\chi(\Gamma)\rceil$. 
\end{itemize}

\subsection{All-negative signed graphs}\label{allneg}\

Again assume $\Gamma$ is a graph with no unbalanced edges.  $-\Gamma$ is quite interesting.  
\begin{itemize}
\item \emph{Balance}: 
\par A subgraph is balanced $\iff$ it is bipartite. 
\par $b_{-\Gamma}(S) =$ the number of bipartite components of $S$ (including isolated vertices).

\item \emph{Closed sets}: $S$ is closed if and only if the union of its non-bipartite components is an induced subgraph.

\item \emph{Vectors}:  If $e$ has endpoints $v_i,v_j$, then $\bx(e) = \B_i+\B_j$ (or its negative).

\item \emph{Incidence matrix}:  $\Eta(-\Gamma)$ is the `unoriented incidence matrix' of $\Gamma$.

\item \emph{Hyperplane arrangement}: 
\par $h_k$ has equation $x_i+x_j=0$ if $e_k$ has endpoints $v_i,v_j$.  
$$r(\cH[-\Gamma]) = \sum_{F\in\Lat\Gamma} \sum_{W \subseteq V(F)^c: \text{stable}} (-1)^{n-c(F)+|W|} |\chi_{(\Gamma\setminus W)/F}(-1)|.$$

\item \emph{Chromatic polynomials}: 
\begin{align*}
\chi^*_{-\Gamma}(\lambda) &= \sum_{F\in\Lat\Gamma} \chi_{\Gamma/F}\big(\frac12\lambda\big) \quad \text{(Zaslavsky (1982c), Theorem 5.2). }\\
\chi_{-\Gamma}(\lambda) &= \sum_{F\in\Lat\Gamma}  \sum_{W \subseteq V(F)^c: \text{stable}} \chi_{(\Gamma\setminus W)/F}\big(\frac{\lambda-1}{2}\big).
\end{align*}

\item \emph{Chromatic numbers}: 
\par $\chi^*(-\Gamma) =$ the largest size of a matching in the complement of $\Gamma$ (based on Zaslavsky (1982c), page 299). 
\par $\chi(-\Gamma)$ has not yet seemed interesting.
\end{itemize}

\subsection{Signed expansion graphs}\label{expansion}\

Now assume we have a simple graph $\Gamma$.  The properties of $\pm\Gamma$ and $\pm\Gamma\full$ are closely related to those of $\Gamma$.  
\begin{itemize}
\item \emph{Balance}:  Each set $S \subseteq E(\Gamma)$ gives $2^{n-c(S)}$ balanced subsets of $E(\pm\Gamma)$ by switching $+S$.

\item \emph{Closed sets}:  Each closed set $S \subseteq E(\Gamma)$ gives $2^{n-c(S)}$ balanced closed subsets of $E(\pm\Gamma)$ by switching $+S$. 
\par Each unstable (i.e., non-independent) vertex subset $W \subseteq V$ gives $2^{n-|W|-c(S)}$ unbalanced closed sets for each closed set $S$ in $\Gamma \setm W$ by taking $E(\pm\Gamma){:}W \cup S'$ where $S'$ is any switching of $+S$ in $\Gamma \setm W$.


\item \emph{Hyperplane arrangement}:  The numbers of regions are 
$$r(\cH[\pm\Gamma\full]) = 2^n (-1)^n\chi_\Gamma(-1) = 2^n |\chi_\Gamma(-1)|$$ 
and 
$$
r(\cH[\pm\Gamma]) = \sum_{W \subseteq V: \text{ stable in }\Gamma} (-2)^{n-|W|} |\chi_{\Gamma\setm W}(-1)|.
$$

\item \emph{Chromatic polynomials}: 
$$\chi_{\pm\Gamma\full}(\lambda) = 2^n \chi_\Gamma(\tfrac12(\lambda-1)),$$ 
$$\chi^*_{\pm\Gamma}(\lambda) = 2^n \chi_\Gamma(\tfrac12\lambda),$$
and 
$$
\chi_{\pm\Gamma}(\lambda) = \sum_{W \subseteq V: \text{ stable in }\Gamma} 2^{n-|W|} \chi_{\Gamma\setm W}(\tfrac12(\lambda-1)).
$$

\item \emph{Chromatic numbers}: 
\par $\chi(\pm\Gamma\full) = \chi^*(\pm\Gamma) = \chi(\Gamma)$ because, if $\Gamma$ is properly colored by the set $\{1,\ldots,\chi(\Gamma)\}$, the colors $i$ can be transferred directly to colors $+i$ (or $-i$ if desired), giving a proper coloration of $\pm\Gamma$.
\par $\chi(\pm\Gamma) = \chi(\Gamma)-1$ because the color $0$ can be substituted for the color $\chi(\Gamma)$.

\end{itemize}

\subsection{Complete signed expansion graphs}\label{completex}\

The signed expansions $\pm K_n$, called the \emph{complete signed link graph}, and $\pm K_n\full$ (which we now define to have any choice of a half edge or negative loop $f_i$ at each vertex), called the \emph{complete signed graph}, have elegantly simple properties.
\begin{itemize}

\item \emph{Closed sets}: For the complete signed graph the lattice of closed sets, $\Lat(\pm K_n\full)$, is isomorphic to the lattice of signed partial partitions of $V$ (Dowling (1973b)).

\item \emph{Vectors}:  
$$\bx(E(\pm K_n)) = \{ \pm(\B_j-\B_i), \pm(\B_j+\B_i) : i \neq j\}$$ 
where we take either $+$ or $-$ for each vector, and    
$$\bx(E(\pm K_n\full)) = \{ \pm(\B_j-\B_i), \pm(\B_j+\B_i) : i \neq j\} \cup \{ \pm \B_i \}$$ 
if every $f_i$ is a half edge (but take $\pm2\B_i$ instead for an $f_i$ that is a negative loop) where again we take either $+$ or $-$ for each vector.

If we take both signs for each vector we get the classical root systems 
$$D_n := \{ \pm(\B_j-\B_i), \pm(\B_j+\B_i) : i \neq j\}$$ 
from $\pm K_n$ (where we take both $+$ and $-$ signs), and 
$$B_n := D_n \cup \{ \pm \B_i \} \text{ and } C_n := D_n \cup \{ \pm2\B_i \}$$ 
from $\pm K_n\full$ (the former if all $f_i$ are half edges, the latter if they are negative loops).

\item \emph{Hyperplane arrangement}: 
$$\cH[\pm K_n\full] = \cB_n = \cC_n \text{ and } \cH[\pm K_n] = \cD_n,$$
the duals of $B_n$, $C_n$, and $D_n$.  The numbers of regions are $2^n n!$ and $2^{n-1} n!$, respectively.

\item \emph{Chromatic polynomials}: 
\begin{align*}
\chi_{\pm K_n\full}(\lambda) &= (\lambda-1)(\lambda-3)(\lambda-5)\cdots(\lambda-2n+1), \\
\chi_{\pm K_n}(\lambda) &= (\lambda-1)(\lambda-3)(\lambda-5)\cdots(\lambda-2n+3)\cdot(\lambda-n+1), \\
\chi^*_{\pm K_n}(\lambda) = \chi^*_{\pm K_n\full}(\lambda) &= \lambda(\lambda-2)(\lambda-4)\cdots(\lambda-2n+2).
\end{align*}

\item \emph{Chromatic numbers}:  
$$\chi(\pm K_n\full) = \chi^*(\pm K_n\full) = \chi^*(\pm K_n) = n$$ and $$\chi(\pm K_n) = n-1.$$

\end{itemize}

\section{Line Graphs}\label{lg}

Several definitions exist for a line graph of a signed graph, which in different ways sign the edges of the line graph of the underlying graph.  Two seem (to me) well motivated.  They are the $\times$-line signed graph of Mukti Acharya (1982a, 2009a), which closely follows the combinatorial spirit of a line graph, and my definition based on the matrix properties of line graphs (Zaslavsky 1984c, 2010b).  It is that last which connects to geometry.

In this section all our graphs are link graphs, but not necessarily simple.  (Allowing loops and half edges adds much complexity.)  

The \emph{line graph} of an unsigned graph $\Gamma=(V,E)$ is the graph $\Lambda(\Gamma)$ of adjacency of edges in $\Gamma$.  Its vertices are the edges of $\Gamma$, and two edges are adjacent if they have a common endpoint.  When $e, f \in E(\Gamma)$ are parallel, in $\Lambda(\Gamma)$ they are doubly adjacent.  
\smallskip

\begin{center}
\quad $\Gamma$ \quad\qquad\qquad\qquad \qquad \ \qquad\qquad $\Lambda(\Gamma)$\\[10pt]

\includegraphics[scale=.8]{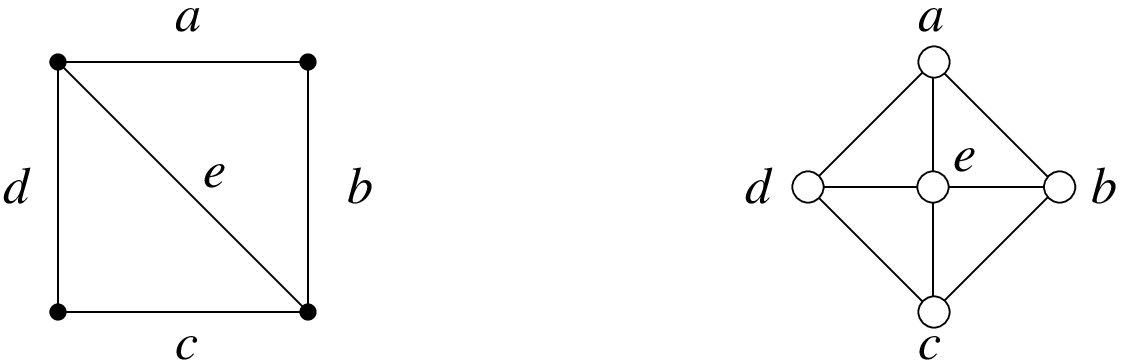}	
\end{center}

Construction of the line graph of a signed graph $\Sigma$ has to be approached through an orientation of $\Sigma$.  Thus, we begin with line graphs of bidirected graphs, which are identical (except for the name) with line graphs of oriented signed graphs.

\subsection{Bidirected line graphs and switching classes}\label{lgb}\

\subsubsection{Bidirected graphs}\

The \emph{line graph of a bidirected graph $\Beta$} is a bidirection of the line graph of $|\Beta|$.  We write $\Lambda(\Beta) := (\Lambda(|\Beta|), \tau_\Lambda)$, where $\tau_\Lambda$ is the bidirection.  To define $\tau_\Lambda(ef)$, where $ef \in E(\Lambda(|\Beta|))$, let $v$ be the vertex at which $e, f$ are adjacent.  Then we define 
$$\tau_\Lambda(e,ef) := \tau(v,e).$$

\subsubsection{Signed graphs}\

This definition implies that, given a signed graph $\Sigma$, to define a line graph we must first orient $\Sigma$ as $\Beta$, then take the line graph $\Lambda(\Beta)$.  

\noindent\parbox{2.5in}{\quad An oriented signed graph and its oriented line graph.}
\hfill
\raisebox{-1.5cm}
{\includegraphics[scale=.8]{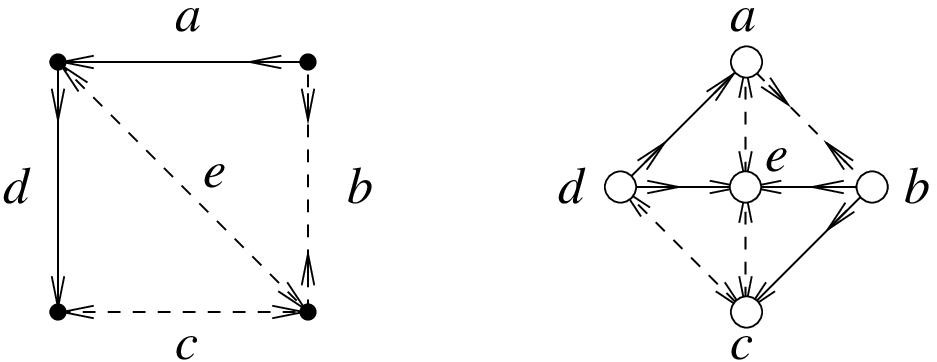}}

Different orientations of $\Sigma$ give different bidirected line graphs $\Lambda(\Beta)$, which may have different signed graphs $\Sigma_{\Lambda(\Beta)}$.  Indeed, reorienting an edge of a bidirected graph corresponds to switching the corresponding vertex in its line graph.  Switching $\Sigma$ itself has a more complicated effect.

\begin{lem}\label{L:lgsgsw}
Any orientations of any two switchings of\/ $\Sigma$ have line graphs that are switching equivalent.
\end{lem}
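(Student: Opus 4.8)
The plan is to factor the comparison into two independent moves and show that each leaves the switching class of the signed line graph unchanged: first, changing the orientation of a fixed signed graph, and second, switching the signed graph itself. The first move will turn out to be a sequence of single-vertex switches in the line graph, and the second will turn out to have \emph{no} effect at all, because the switching sign enters the sign of each line-graph edge twice at one common vertex and cancels. Throughout, every signed line graph in sight has the same underlying graph $\Lambda(|\Sigma|)$, so switching equivalence is the right notion to track.

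First I would pin down how two orientations of a single (link) signed graph are related. If $\tau$ and $\tau'$ both orient $\Sigma$, then for each edge $e=v_iv_j$ we have $\tau(v_i,e)\tau(v_j,e)=\tau'(v_i,e)\tau'(v_j,e)$, since both products equal $-\sigma(e)$; hence at each edge either both ends agree or both ends are reversed. Thus $\tau'$ is obtained from $\tau$ by reorienting a set $R$ of edges. As noted just before the lemma, reorienting an edge $e$ flips $\tau_\Lambda(e,ef)$ for every line-graph edge $ef$ at the line-graph vertex $e$ and leaves every other end untouched (in particular $\tau_\Lambda(f,ef)=\tau(v,f)$ does not see $e$'s orientation), so reorienting $R$ is exactly switching the vertex set $R$ of $\Lambda(\Beta)$. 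By Lemma~\ref{L:swbd} the resulting signed line graph is a switching of $\Sigma_{\Lambda(\Beta)}$. Hence all orientations of one fixed signed graph yield switching-equivalent signed line graphs.

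Next I would compare two switchings, written $\Sigma^{\zeta_1}$ and $\Sigma^{\zeta_2}$. Fix one orientation $\Beta$ of $\Sigma$ and use the switched orientations $\Beta^{\zeta_i}$, which orient $\Sigma^{\zeta_i}$ by Lemma~\ref{L:swbd}. The key computation is that for a line-graph edge $ef$ whose edges meet at the common vertex $v$,
\[
\sigma_{\Lambda(\Beta^\zeta)}(ef) = -\tau^\zeta(v,e)\,\tau^\zeta(v,f) = -\tau(v,e)\zeta(v)\,\tau(v,f)\zeta(v) = -\tau(v,e)\tau(v,f) = \sigma_{\Lambda(\Beta)}(ef),
\]
since $\zeta(v)^2=1$ and the two ends share the same vertex $v$. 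Thus $\Sigma_{\Lambda(\Beta^{\zeta_i})}=\Sigma_{\Lambda(\Beta)}$ on the nose for each $i$. Combining this with the first step, any orientation $\Beta_i$ of $\Sigma^{\zeta_i}$ gives $\Sigma_{\Lambda(\Beta_i)}\sim\Sigma_{\Lambda(\Beta^{\zeta_i})}=\Sigma_{\Lambda(\Beta)}$, so the two signed line graphs are switching equivalent to each other.

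The main obstacle is conceptual rather than computational: recognizing that switching $\Sigma$ by $\zeta$ should be paired with the corresponding switching of the \emph{orientation} by the same $\zeta$ (legitimized by Lemma~\ref{L:swbd}), and then seeing that both ends of a line-graph edge $ef$ are read off at the single original vertex $v$ where $e$ and $f$ meet, so the factor $\zeta(v)$ appears squared and vanishes. The other point needing care is the bookkeeping in the first step: that reorienting one edge is precisely one vertex switch in the line graph, affecting no other ends, so that arbitrary orientation choices are absorbed into switching equivalence rather than changing the outcome.
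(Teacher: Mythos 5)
Your proof is correct, but it takes a genuinely different route from the paper's. The paper works globally with closed walks: it derives the formula $\sigma_\Lambda(W) = (-)^{l-m}\sigma(W_0)$ expressing the sign of any closed walk $W$ in the line graph in terms of a reduced closed walk $W_0$ in $|\Sigma|$, observes that $\sigma(W_0)$ is unaffected by the choice of orientation and by switching $\Sigma$, and then invokes Proposition \ref{P:switchingequiv}\eqref{P:swB} (equal positive-circle classes imply switching equivalence) to conclude. You instead factor the comparison into two local moves and verify each by a one-line signature computation: reorienting an edge set $R$ is literally the vertex switching of $\Lambda(\Beta)$ by $R$ (so Lemma \ref{L:swbd} gives switching equivalence), and switching $\Sigma$ by $\zeta$ paired with the induced orientation $\tau^\zeta$ changes nothing at all, since both ends of a line-graph edge $ef$ are read at the same original vertex $v$ and the factor $\zeta(v)^2$ cancels. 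Your argument is shorter, avoids the walk-reduction bookkeeping entirely, and handles parallel edges with no extra care (each line-graph edge carries its own common vertex), whereas the paper explicitly sets parallel edges aside. What the paper's longer computation buys is the walk-sign formula \eqref{E:lgwalksign} itself, which identifies exactly which circles of the line graph are positive in terms of $\Sigma$ --- information of independent interest that your more economical argument does not produce.
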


\begin{proof}
We assume there are no parallel edges; the proof is not much different if there are any.

Let $\Sigma^\zeta$ be a switching of $\Sigma$ and let $\tau$ and $\tau'$ be orientation functions of $\Sigma$ and $\Sigma^\zeta$, respectively, giving bidirected graphs $\Beta$ and $\Beta'$ on the underlying graph $\Gamma := |\Sigma|$.  Then $\tau(v,e)\tau(w,e) = -\sigma(e)$ and $\tau'(v,e)\tau'(w,e) = -\zeta(v)\sigma(e)\zeta(w)$ for each edge $e$ with $V(e) = \{v,w\}$.  

Let $\Lambda := \Lambda(\Beta)$ and $\Lambda' := \Lambda(\Beta')$; they have the same underlying graph $\Lambda(\Gamma)$.  Suppose $e,f$ are adjacent at $v$.  In the line graph, $\tau_\Lambda(e,ef) = \tau(v,e)$.  Thus, $$\sigma_\Lambda(ef) = -\tau_\Lambda(e,ef)\tau_\Lambda(f,ef) = -\tau(v,e)\tau(v,f)$$ and, similarly, $\sigma'_\Lambda(ef) = -\tau'(v,e)\tau'(v,f)$.

Let $W := e_0e_1\cdots e_{l-1}e_l$, where $e_l = e_0$, be a closed walk in $\Lambda$.  Thus, $e_{i-1}, e_i$ have a common vertex $v_i$ in $\Gamma$.  Then 
\begin{equation}
\begin{aligned}
\sigma_\Lambda(W) &= \sigma_\Lambda(e_0e_1)\cdots\sigma_\Lambda(e_{l-1}e_l) \\
&= \big[ -\tau(v_1,e_0)\tau(v_1,e_1) \big] \big[ -\tau(v_2,e_1)\tau(v_2,e_2) \big] \cdots \big[ -\tau(v_l,e_{l-1})\tau(v_l,e_l) \big] \\
&= (-)^{l} \tau(v_1,e_0) \big[\tau(v_1,e_1)\tau(v_2,e_1)\big] \cdots \big[\tau(v_{l-1},e_{l-1})\tau(v_l,e_{l-1})\big] \tau(v_l,e_l) .
\end{aligned}
\label{E:Wsign}
\end{equation}

Now there are two cases.

If all $v_i = v_1$, then $\sigma_\Lambda(W) = (-)^{l} \tau(v_1,e_0)\tau(v_l,e_l) = (-)^l$ since $e_0=e_l$ and $v_l=v_1$.

Otherwise, not all $v_i$ are the same vertex.  A consecutive pair $v_{i-1}, v_i$ may be the same or different.  
If they are the same, the factor $[\tau(v_{l-1},e_{i-1})\tau(v_i,e_{i-1})] = +$, and also $W' := e_0e_1\cdots e_{i-2}e_i \cdots e_{l-1}e_l$ is a walk in $\Lambda$.  Then $\sigma_\Lambda(W) = -\sigma_\Lambda(W')$.  In this way we can reduce $W$ by eliminating consecutive equal vertices while negating the sign of the walk.  Similarly, if $v_1 = v_l$ we can eliminate $v_l$ and $e_l$ from the reduced walk.  Let $W'' = f_0f_1\cdots f_m$ be the walk in $\Lambda$ that results after all these reductions and let $w_i$ be the common vertex of $f_{i-1}, f_i$.  $W''$ has positive length and $f_0=f_m$, so $W''$ is a closed walk and it has sign $(-)^{l-m}\sigma_\Lambda(W)$.  Furthermore, $V(f_i) = \{w_i,w_{i+1}\}$ for $0<i<m$.  Define $w_0$ so that $V(f_0) = \{w_0,w_1\}$.  Now $w_0f_0w_1f_1\cdots f_{m-1}w_m$ is a walk in $\Gamma$.  Because $f_0=f_m$ and, by the construction of $W''$, $w_1 \neq w_m$, it must be true that $w_0=w_m$.  Therefore, $W_0 := w_0f_0w_1\cdots f_{m-1}w_m$ is a closed walk of length $m$ in $\Gamma$.  
Now we evaluate $\sigma_\Lambda(W'')$.  From \eqref{E:Wsign},
\begin{equation*}
\begin{aligned}
\sigma_\Lambda(W'') &= \sigma_\Lambda(f_0f_1)\cdots\sigma_\Lambda(f_{m-1}f_m) \\
&= (-)^{m} \tau(w_1,f_0) 
\\ &\qquad \cdot 
\big[\tau(w_1,f_1)\tau(w_2,f_1)\big] \cdots \big[\tau(w_{m-1},f_{m-1})\tau(w_m,f_{m-1})\big] \tau(w_m,f_m) \\
&= (-)^{m} \tau(w_0,f_0) \tau(w_1,f_0) 
\\ &\qquad \cdot \big[\tau(w_1,f_1)\tau(w_2,f_1)\big] \cdots \big[\tau(w_{m-1},f_{m-1})\tau(w_m,f_{m-1})\big] \\
&= \sigma(W_0) .
\label{E:W''sign}
\end{aligned}
\end{equation*}

We conclude that  
\begin{equation}
\sigma_\Lambda(W) = (-)^{l-m}\sigma(W_0)
\label{E:lgwalksign}
\end{equation}
when not all the vertices $v_i$ are the same vertex.  
When all $v_i$ are the same, we can take $W_0$ to be a trivial walk (length $m=0$) and once again we have the same formula \eqref{E:lgwalksign}.

We prove the lemma by observing that $\sigma(W_0)$ and hence also $\sigma(W)$ are not affected by the choice of orientation and are not altered by switching $\Sigma$.  Therefore $\Lambda$ and $\Lambda'$ have the same positive circles.  By Proposition \ref{P:switchingequiv}(ii), $\Lambda$ and $\Lambda'$ are switching equivalent. 
\end{proof}

The line graph of a signed graph $\Sigma$ cannot be a signed graph, because reorienting an edge switches the corresponding vertex in the line graph.  Therefore, $\Lambda(\Sigma)$ must be a switching class of signatures of $\Lambda(|\Sigma|)$.

\begin{thm}\label{T:lgswclass}
The line graph of a switching class of signed graphs is a well defined switching class of signed graphs.
\end{thm}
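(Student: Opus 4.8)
The plan is to exhibit the asserted object explicitly and then show that Lemma \ref{L:lgsgsw} removes every choice made in its construction, so that nothing substantive remains to be proved. First I would pin down the construction. Given a switching class $[\Sigma]$, choose any representative $\Sigma' \in [\Sigma]$ and any orientation $\Beta'$ of $\Sigma'$, form the bidirected line graph $\Lambda(\Beta')$, and pass to its signed graph $\Sigma_{\Lambda(\Beta')}$. The proposed line graph of $[\Sigma]$ is the switching class $[\Sigma_{\Lambda(\Beta')}]$. I would first observe that the underlying graph of every such $\Sigma_{\Lambda(\Beta')}$ is $\Lambda(|\Sigma|)$, which depends only on $[\Sigma]$, since all members of a switching class share the same underlying graph $|\Sigma|$ by the definition of $\sim$. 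Hence the output is genuinely a switching class of signatures on the single fixed graph $\Lambda(|\Sigma|)$, which is the second half of what the theorem asserts.

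Next I would establish well-definedness, namely that $[\Sigma_{\Lambda(\Beta')}]$ is independent of both the representative $\Sigma'$ and the orientation $\Beta'$. Suppose $\Sigma_1, \Sigma_2 \in [\Sigma]$, so that $\Sigma_2 \sim \Sigma_1$, and let $\Beta_1, \Beta_2$ be any orientations of $\Sigma_1, \Sigma_2$ respectively. Then $\Beta_1$ and $\Beta_2$ are orientations of two switchings of a common signed graph, so Lemma \ref{L:lgsgsw} applies verbatim and yields that $\Lambda(\Beta_1)$ and $\Lambda(\Beta_2)$ are switching equivalent; therefore $[\Sigma_{\Lambda(\Beta_1)}] = [\Sigma_{\Lambda(\Beta_2)}]$. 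Specializing to $\Sigma_1 = \Sigma_2$ removes the ambiguity in the orientation, and allowing $\Sigma_1 \neq \Sigma_2$ removes the ambiguity in the representative, so both sources of choice collapse to a single switching class in one stroke.

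There is no real obstacle, because the hard analysis has already been carried out in Lemma \ref{L:lgsgsw}: the present theorem is essentially a repackaging of that lemma at the level of switching classes. The only point that needs care is to phrase the definition of ``the line graph of a switching class'' through an \emph{arbitrary} representative and an \emph{arbitrary} orientation, and then to note that Lemma \ref{L:lgsgsw} is stated in exactly the generality required---arbitrary orientations of arbitrary switchings---so that the conclusion $\Lambda([\Sigma]) := [\Sigma_{\Lambda(\Beta')}]$ is a single well-defined switching class of signed graphs on $\Lambda(|\Sigma|)$.
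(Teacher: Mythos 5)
Your proposal is correct and follows the same route as the paper: the paper's proof likewise reduces the theorem to the statement that arbitrary orientations of arbitrary switchings of $\Sigma$ have switching-equivalent line graphs, which is exactly Lemma \ref{L:lgsgsw}. Your extra observation that the underlying graph $\Lambda(|\Sigma|)$ is fixed across the switching class is a reasonable bit of added explicitness but does not change the argument.
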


\begin{proof}
The theorem means that if two signed graphs are switching equivalent, and if each one is oriented arbitrarily, the signed graphs of the line graphs of the two oriented signed graphs are switching equivalent.  That is Lemma \ref{L:lgsgsw}.
\end{proof}

In view of this theorem we may write 
\begin{quote}
\begin{itemize}
\item[{$\Lambda[\Sigma] :=$}]
 the switching class of line graphs of the signed graphs in the switching class $[\Sigma]$.  
\end{itemize}
\end{quote}
I sometimes refer to \emph{a line graph} of $\Sigma$, meaning any signed graph in the switching class $\Lambda[\Sigma]$.  

\subsubsection{All-negative signatures and their line graphs}\

There is one circumstance in which there is a well defined signed line graph: an all-negative signature.  

\begin{prop}\label{P:allneg}
If\/ $\Gamma$ is a link graph, then $\Lambda[-\Gamma] = [-\Lambda(\Gamma)]$.
\end{prop}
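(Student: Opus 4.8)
The plan is to verify the equality of switching classes by matching circle signs and then appealing to the characterization of switching equivalence. Both $\Lambda[-\Gamma]$ and $[-\Lambda(\Gamma)]$ are switching classes of signatures on the single underlying graph $\Lambda(\Gamma)$, so by Proposition \ref{P:switchingequiv}(ii) it is enough to exhibit one orientation $\Beta$ of $-\Gamma$ for which the signed line graph $\Sigma_{\Lambda(\Beta)}$ shares its positive circles with $-\Lambda(\Gamma)$. I will in fact match the sign of \emph{every} circle, which is stronger and no harder.

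The engine is formula \eqref{E:lgwalksign}, already proved inside Lemma \ref{L:lgsgsw}: for any orientation $\Beta$ of a signed graph $\Sigma$ on $\Gamma$ and any closed walk $W$ of length $l$ in $\Lambda(\Beta)$, its line-graph sign satisfies $\sigma_\Lambda(W) = (-)^{l-m}\sigma(W_0)$, where $W_0$ is the induced closed walk of length $m$ in $\Gamma$. Specializing to $\Sigma = -\Gamma$, every edge of $-\Gamma$ is negative, so the length-$m$ walk $W_0$ has sign $\sigma(W_0) = (-)^m$. Substituting yields $\sigma_\Lambda(W) = (-)^{l-m}(-)^m = (-)^l$; the degenerate case $m = 0$, handled separately at the close of that proof, gives the same answer.

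A circle $C$ of length $l$ in $\Lambda(\Gamma)$ is in particular a closed walk, so the previous step gives $\sigma_\Lambda(C) = (-)^l$. This is exactly the sign of a length-$l$ circle under the all-negative signature, so $C$ carries the same sign in $\Sigma_{\Lambda(\Beta)}$ as in $-\Lambda(\Gamma)$. Hence the two signatures have the same class of positive circles, and Proposition \ref{P:switchingequiv}(ii) delivers $\Sigma_{\Lambda(\Beta)} \sim -\Lambda(\Gamma)$, i.e.\ $\Lambda[-\Gamma] = [-\Lambda(\Gamma)]$.

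The substitution is immediate, so the only thing to watch is the bookkeeping underlying formula \eqref{E:lgwalksign}: that it was established for \emph{all} closed walks (hence applies to circles) and that the exponent $m$ appearing there is genuinely the length of $W_0$ as a walk in $\Gamma$, which is what forces $\sigma(W_0) = (-)^m$ for the all-negative signature. Since Lemma \ref{L:lgsgsw} builds $W_0$ as a closed walk of length $m$ and proves \eqref{E:lgwalksign} for every closed walk, this obstacle is already cleared. The orientation-independence of the resulting sign also re-confirms the well-definedness guaranteed by Theorem \ref{T:lgswclass}.
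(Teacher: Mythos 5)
Your proof is correct, but it takes a genuinely different route from the paper's. The paper's proof is a two-line direct construction: orient $-\Gamma$ with every edge extraverted, i.e.\ $\tau(v,e)\equiv +$; then by the definition $\tau_\Lambda(e,ef):=\tau(v,e)$ every end in the line graph is also $+$, so $\sigma_\Lambda(ef)=-\tau_\Lambda(e,ef)\tau_\Lambda(f,ef)=-$ for every edge, and the underlying signed graph of $\Lambda(-\Gamma,\tau)$ is \emph{literally equal} to $-\Lambda(\Gamma)$ --- no appeal to circle signs or to Proposition \ref{P:switchingequiv}(ii) is needed. You instead run the walk-sign formula \eqref{E:lgwalksign} with $\sigma(W_0)=(-)^m$ to conclude $\sigma_\Lambda(C)=(-)^l$ for every circle, and then invoke the characterization of switching equivalence. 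Your computation is sound and buys a little more: it shows that for \emph{every} orientation of $-\Gamma$ the line graph has the circle signs of $-\Lambda(\Gamma)$, whereas the paper's argument singles out one canonical orientation that realizes $-\Lambda(\Gamma)$ on the nose (the rest then follows from Theorem \ref{T:lgswclass}). The cost is that you inherit the caveat stated at the top of the proof of Lemma \ref{L:lgsgsw}: formula \eqref{E:lgwalksign} is derived there under the assumption that there are no parallel edges (parallel edges are doubly adjacent, so the common vertex $v_i$ of $e_{i-1},e_i$ is not unique), while the proposition allows an arbitrary link graph. The paper waves this away for the lemma, and the paper's own extraverted-orientation argument is immune to it, but if you lean on \eqref{E:lgwalksign} you should at least note that the formula extends to the multigraph case.
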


\begin{proof}
Orient $-\Gamma$ so every edge is extraverted; that is, $\tau(v,e) \equiv +$.  Then in $\Lambda(-\Gamma,\tau)$, every edge is extraverted; thus, the signed graph underlying $\Lambda(-\Gamma,\tau)$ has all negative edges.
\end{proof}

Consequently, we can say that $$\Lambda(-\Gamma) = - \Lambda(\Gamma),$$ as in the following example:

\noindent\parbox{3in}{\quad A line graph of an all-negative signed graph, oriented with extraverted edges.}
\hfill
\raisebox{-1.5cm}
{\includegraphics[scale=.8]{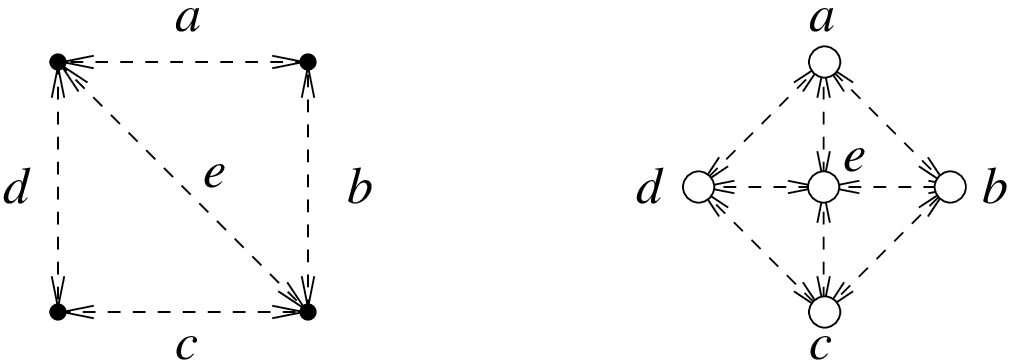}}	

\subsubsection{All-positive signatures and their line graphs}\

On the other hand, if $\Sigma$ is all positive, its line graph cannot usually be made to be all positive or all negative.  Thus, all-negative signed graphs are special.  Indeed, in connection with line graphs the best way to think of an ordinary graph $\Gamma$ is as $-\Gamma$, not $+\Gamma$ as in most other respects.

Nonetheless there is value in looking into all-positive graphs, because their orientations are digraphs.  The Harary--Norman line graph of a digraph $D$ \cite{HN} is precisely the positive part of the line graph of $D$, if $D$ is treated as an oriented all-positive signed graph:  $\Lambda_{\textrm{HN}}(D) = \Lambda(+D)^+$.  $\Lambda_{\textrm{HN}}(D)$ detects directed paths of length two in $D$ but ignores head-to-head and tail-to-tail adjacencies.  $\Lambda(+D)$ records all adjacencies.  (Previous attempts to encompass all edge adjacencies were handicapped by not having bidirected edges in the line graph.)

\subsection{Adjacency matrix and eigenvalues}\label{lga}\

With line-graph matrices we come to the third reason the incidence matrix is important.  The adjacency matrix of the line graph of an ordinary graph is computed directly from the incidence matrix of the graph, and the same holds true for signed graphs.  

An example, using a particular choice of orientation of $\Sigma_4$:\\
\begin{center}
\includegraphics[scale=.7]{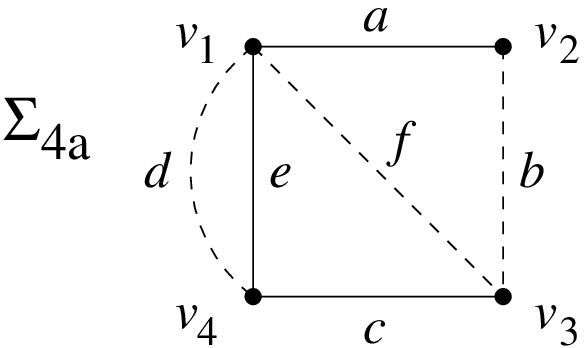}
\qquad
\raisebox{1.2cm}{
$A(\Lambda(\Sigma_{4\mathrm{a}})) = 
\begin{pmatrix}
0	&1	&0	&-1	&-1	&1	\\
1	&0	&-1	&0	&0	&-1	\\
0	&-1	&0	&-1	&1	&-1	\\
-1	&0	&-1	&0	&0	&0	\\
-1	&0	&1	&0	&0	&1	\\
1	&-1	&-1	&0	&1	&0	\\
\end{pmatrix}$
}
\end{center}

\begin{thm}\label{T:lga}
For a bidirected link graph\/ $\Sigma$, $A(\Lambda(\Sigma)) = 2I - \Eta(\Sigma)\transpose \Eta(\Sigma)$.
\end{thm}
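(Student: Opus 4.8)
The plan is to verify the identity entrywise, indexing rows and columns by the edges of $\Sigma$ (which are exactly the vertices of $\Lambda(\Sigma)$) and matching the two sides term by term. First I would record that, because $\Sigma$ is a link graph, its incidence matrix has entries $\eta_{ve} = \tau(v,e)$ when $v \in V(e)$ and $0$ otherwise, so each column $\bx(e)$ has precisely two nonzero entries, both $\pm 1$, at the two endpoints of $e$. Consequently the Gram matrix has entries
$$
\big(\Eta(\Sigma)\transpose\Eta(\Sigma)\big)_{ef} = \sum_{v\in V} \eta_{ve}\eta_{vf} = \sum_{v\in V(e)\cap V(f)} \tau(v,e)\tau(v,f),
$$
and this single identity drives the entire argument.

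Next I would dispose of the diagonal. Since $e$ is a link with two distinct endpoints, $\big(\Eta(\Sigma)\transpose\Eta(\Sigma)\big)_{ee} = \sum_{v\in V(e)}\tau(v,e)^2 = 2$, so $\big(2I - \Eta(\Sigma)\transpose\Eta(\Sigma)\big)_{ee} = 0$; this agrees with $A(\Lambda(\Sigma))_{ee} = 0$, because the line graph of a link graph has no loops or half edges at its vertices.

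For the off-diagonal entries I would unwind the line-graph definition. If $e$ and $f$ are adjacent at a common vertex $v$, the corresponding edge $ef$ of $\Lambda(|\Sigma|)$ receives, via $\tau_\Lambda(e,ef) := \tau(v,e)$, the sign
$$
\sigma_\Lambda(ef) = -\tau_\Lambda(e,ef)\tau_\Lambda(f,ef) = -\tau(v,e)\tau(v,f).
$$
By definition $A(\Lambda(\Sigma))_{ef}$ is the number of positive minus the number of negative line-graph edges joining $e$ and $f$. Each common vertex $v$ of $e,f$ produces exactly one such edge, carrying sign $-\tau(v,e)\tau(v,f)$, so this entry equals $\sum_{v\in V(e)\cap V(f)}\big(-\tau(v,e)\tau(v,f)\big) = -\big(\Eta(\Sigma)\transpose\Eta(\Sigma)\big)_{ef}$. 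Combining the three cases yields $A(\Lambda(\Sigma)) = 2I - \Eta(\Sigma)\transpose\Eta(\Sigma)$.

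The one point deserving care — the main, if modest, obstacle — is the bookkeeping for parallel edges. Two parallel edges $e,f$ share both endpoints $v,w$ and are therefore \emph{doubly} adjacent in $\Lambda(|\Sigma|)$; I must confirm that they contribute two line-graph edges, of signs $-\tau(v,e)\tau(v,f)$ and $-\tau(w,e)\tau(w,f)$, so that the ``positive minus negative'' count (which may involve a cancellation when these signs differ) matches the two-term sum over $V(e)\cap V(f)$ in the Gram matrix exactly. Once this alignment between \emph{summing over shared endpoints} and \emph{counting signed parallel line-graph edges} is checked, the identity follows with no further computation.
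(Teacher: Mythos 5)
Your proof is correct and follows essentially the same route as the paper's: an entrywise computation showing the diagonal entries of $\Eta(\Sigma)\transpose\Eta(\Sigma)$ equal $2$ and the off-diagonal entries equal $\sum_{v}\tau(v,e)\tau(v,f) = -\sigma_\Lambda(ef)$ summed over common endpoints. Your explicit check that doubly adjacent parallel edges contribute two signed terms matching the two-term Gram sum is a sensible piece of bookkeeping that the paper's proof passes over silently, but it does not change the argument.
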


\begin{proof}
The $(j,j)$ entry of $\Eta(\Sigma)\transpose \Eta(\Sigma)$ is the sum over all vertices of $\eta_{v_ie_j}^2 = 1$, therefore it equals $2$.

The $(j,k)$ entry of $\Eta(\Sigma)\transpose \Eta(\Sigma)$ for $j \neq k$ is the sum over all vertices of $\eta_{v_ie_j}\eta_{v_ie_k}$.  By Equation \eqref{E:taueta} this is 0 if $e_j$ and $e_k$ are not adjacent, and if they are adjacent at $v_m$ then it is $\tau(v_i,e_j)\tau(v_i,e_k) = -\sigma(e_je_k)$.  

Thus, the off-diagonal entries of $\Eta(\Sigma)\transpose \Eta(\Sigma)$ are those of $-A(\Lambda(\Sigma))$ and the diagonal entries all equal 2.
\end{proof}

The orientation of $\Sigma$ used to calculate $\Lambda(\Sigma)$ can affect the values in $A(\Lambda(\Sigma))$, as the choice of orientation switches the line graph, and that corresponds to conjugating $A(\Lambda(\Sigma))$ by a diagonal matrix with $\pm1$'s on the diagonal (see the proof of Theorem \ref{T:e}).  However, the eigenvalues of $A(\Lambda(\Sigma))$ are independent of the choice of orientation.

We can interpret Theorem \ref{T:lga} as saying that the inner product of representation vectors $\bx(e_j)$ and $\bx(e_k)$ equals 2 if $j=k$ and $-\sigma(e_je_k)$ if $j \neq k$.  A matrix of inner products is known as a \emph{Gram matrix}; thus, $2I - A(\Lambda(\Sigma))$ is a Gram matrix of vectors with length $\sqrt2$.

\begin{cor}\label{C:lgebound}
All the eigenvalues of a line graph of a signed graph are $\leq 2$.
\end{cor}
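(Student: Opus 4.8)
The plan is to read the bound straight off Theorem \ref{T:lga}. First I would fix an arbitrary orientation of $\Sigma$, making it a bidirected link graph, so that Theorem \ref{T:lga} applies and gives
\[
A(\Lambda(\Sigma)) = 2I - \Eta(\Sigma)\transpose \Eta(\Sigma).
\]
The whole corollary then rests on the single observation that the matrix $N := \Eta(\Sigma)\transpose \Eta(\Sigma)$ is positive semidefinite. This is immediate because $N$ is a Gram matrix (as noted in the remark just after Theorem \ref{T:lga}, its entries are the inner products $\bx(e_j)\cdot\bx(e_k)$), so $\bz\transpose N \bz = \|\Eta(\Sigma)\bz\|^2 \geq 0$ for every $\bz$; hence every eigenvalue of $N$ is $\geq 0$. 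Alternatively one could invoke that $N$ and $\Eta(\Sigma)\Eta(\Sigma)\transpose = L(\Sigma)$ share the same nonzero eigenvalues and that $L(\Sigma)$ is positive semidefinite by Theorem \ref{T:e}.

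Next I would convert the semidefiniteness of $N$ into the eigenvalue bound. Since $N$ is real symmetric and positive semidefinite, it has an orthonormal basis of eigenvectors with eigenvalues $\mu \geq 0$; each such eigenvector is also an eigenvector of $A(\Lambda(\Sigma)) = 2I - N$, with eigenvalue $2 - \mu \leq 2$. As these exhaust the spectrum, every eigenvalue of $A(\Lambda(\Sigma))$ is at most $2$.

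Finally I would address the phrase ``a line graph of a signed graph,'' since the line graph is only well defined up to switching (Theorem \ref{T:lgswclass}). Here I would recall, exactly as in the remark following Theorem \ref{T:lga} and in the proof of Theorem \ref{T:e}, that choosing a different orientation merely switches $\Lambda(\Sigma)$, which conjugates $A(\Lambda(\Sigma))$ by a diagonal matrix with $\pm 1$ entries and therefore leaves the spectrum unchanged. Thus the bound $\leq 2$ is an invariant of the switching class $\Lambda[\Sigma]$ and does not depend on any choices. I expect no real obstacle: the entire argument is the positive-semidefiniteness of a Gram matrix together with the identity of Theorem \ref{T:lga}, and the only point requiring a sentence of care is the orientation-independence of the eigenvalues.
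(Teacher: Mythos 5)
Your proposal is correct and follows essentially the same route as the paper: rewrite Theorem \ref{T:lga} as $2I - A(\Lambda(\Sigma)) = \Eta(\Sigma)\transpose\Eta(\Sigma)$ and observe that a matrix of the form $M\transpose M$ is positive semidefinite, so the eigenvalues of $A(\Lambda(\Sigma))$ are at most $2$. Your extra sentence on orientation-independence of the spectrum is a welcome clarification the paper handles in the surrounding remarks rather than in the proof itself.
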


\begin{proof}
Rewrite Proposition \ref{T:lga} as $2I - A(\Lambda(\Sigma)) = \Eta(\Sigma)\transpose \Eta(\Sigma)$.  A matrix of the form $M\transpose M$ has non-negative real eigenvalues.
\end{proof}

In unsigned graph theory the eigenvalues of a line graph are $\geq -2$.  Corollary \ref{C:lgebound} is the generalisation to signed graphs, because in what concerns line graphs, an unsigned graph should be taken as all negative, and the eigenvalues of $-\Sigma$ are the negatives of those of $\Sigma$ (since $A(-\Sigma) = -A(\Sigma)$).

\subsection{Reduced line graphs and induced non-subgraphs}\label{lgri}\

If $\Sigma$ has a negative digon, that is, a pair of parallel edges $e,f$, one positive and the other negative, then in $\Lambda(\Sigma)$ there is a double edge $ef$ that forms a negative digon.  Therefore, the $(e,f)$ entry of $A(\Lambda(\Sigma))$ equals 0 and, correspondingly, in the reduced line graph $\bar\Lambda(\Sigma)$ the vertices $e$ and $f$ are not adjacent.  
My conclusion:  In what regards adjacency matrices and eigenvalues, one should look at reduced line graphs rather than unreduced line graphs; but look at the (reduced) line graphs of both unreduced and reduced signed graphs!

A well known theorem of Beineke and Gupta from around 1970 is that a simple graph is a line graph if and only if it has no induced subgraph that is one of nine particular graphs, all of order at most 6.  Chawathe and Vijayakumar (1990a) found the analogous 49 excluded induced switching classes, all of order at most 6, for signed simple graphs that are reduced line graphs of signed graphs.  
(I believe the value 6 for the largest order is due to automorphism properties of the classical root systems.)

\section{Angle Representations}\label{angle}

In this section we represent a signed graph by mapping the vertices, instead of the edges, to vectors.  In such `vertex representations' it is best to assume all underlying graphs are simple.  
For a non-zero vector $\by$, the unit vector in the same direction is $\hat\by := \|\by\|\inv\by$.  

An \emph{angle representation} of $\Sigma$ is a mapping $\brho:  V \to \bbR^d$, for some dimension $d$, such that 
$$
\hat\brho(v) \cdot \hat\brho(w) = \frac{a_{vw}}{\nu} = \begin{cases}
\qquad 0, &\text{ if $vw$ is not an edge and $v \neq w$,}  \\
+1/\nu, &\text{ if $vw$ is a positive edge, and}  \\
-1/\nu, &\text{ if $vw$ is a negative edge,}  \\
\end{cases}
$$
for a positive constant $\nu$.  Equivalently, the representing vectors $\brho(v), \brho(w)$ of vertices $v,w$ make an angle 
$$
\angle\big(\brho(v), \brho(w)\big) = \begin{cases}
\theta = \arccos(1/\nu) \in [0,\pi/2], &\text{ if } \sigma(vw) = +, \\
\pi-\theta, &\text{ if } \sigma(vw) = -, \\
\pi/2, &\text{ if } vw \notin E, \ v \neq w.  
\end{cases}
$$
When $X \subseteq \bbR^d$, we call $\brho$ an \emph{angle representation in $X$} if $\Im \brho \subseteq X$.  
As the length of $\brho(v)$ has no role in the definition, one still has an angle representation after multiplying any $\brho(v)$ by any positive real number.  Thus, for instance, one may assume all the representing vectors have a particular desired length such as 1 or 2.  

Switching $v$ in $\Sigma$ corresponds to replacing $\brho(v)$ by $-\brho(v)$ in the angle representation.

A generalization of the Gram-matrix (that is, dot-product) interpretation of Theorem \ref{T:lga} is a \emph{Gramian angle representation} of $\Sigma$.  That is an angle representation such that 
$$\brho(v) \cdot \brho(w) = a_{vw}$$ 
for every pair of distinct vertices.  
It follows by comparing the two definitions that $\|\brho(v)\|\cdot\|\brho(w)\| = \nu$ for adjacent vertices.  

An \emph{anti-Gramian angle representation} of $\Sigma$ is a Gramian angle representation of $-\Sigma$.  Vijayakumar uses anti-Gramian representations (see Vijayakumar (1987a) et al.).  Example \ref{X:lgrepn} will show why one wants them.

\begin{prop}\label{P:gramian}
In a Gramian angle representation $\brho$ of a connected signed simple graph $\Sigma$:
\begin{enumerate}[{\rm(a)}]
\item If\/ $\Sigma$ is not bipartite, all representing vectors $\brho(v)$ have the same length $\sqrt\nu$.
\item If\/ $\Sigma$ is bipartite with color classes $V_1$ and $V_2$, then $\|\brho(v)\| = \alpha$ if\/ $v \in V_1$ and $\|\brho(v)\| = \nu/\alpha$ if\/ $v \in V_2$, where $\alpha>0$.  
Then $\brho'$ defined by $\brho'(v) = \hat\brho(v)\sqrt\nu$ is an angle representation in which all representing vectors have the same length.
\end{enumerate}
\end{prop}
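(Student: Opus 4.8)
The plan is to reduce both parts to a single constraint on the lengths $\ell(v) := \|\brho(v)\|$. First I would compare the two definitions on an edge $vw$: since $\brho$ is Gramian we have $\brho(v)\cdot\brho(w) = a_{vw}$, while the angle condition reads $\hat\brho(v)\cdot\hat\brho(w) = a_{vw}/\nu$. Writing $\hat\brho(v) = \brho(v)/\ell(v)$, the left-hand side of the angle condition equals $a_{vw}/\big(\ell(v)\ell(w)\big)$, so matching the two gives $a_{vw}/\big(\ell(v)\ell(w)\big) = a_{vw}/\nu$. Because $a_{vw} = \pm1 \neq 0$ on an edge, this yields $\ell(v)\ell(w) = \nu$ for every edge $vw$ — exactly the relation already noted just before the proposition.

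Next I would track how $\ell$ propagates along walks. The edge relation gives $\ell(w) = \nu/\ell(v)$ for each neighbor $w$ of $v$, so along any walk $v_0 v_1 \cdots v_k$ the length alternates, with $\ell(v_{2i}) = \ell(v_0)$ and $\ell(v_{2i+1}) = \nu/\ell(v_0)$. (Equivalently, setting $m(v) := \log\ell(v) - \tfrac12\log\nu$ converts the multiplicative relation into $m(w) = -m(v)$ across every edge, so $|m|$ is constant on the connected graph while its sign must flip along each edge; this is a two-colouring constraint.)

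For part (a), non-bipartiteness supplies an odd cycle $v_0 v_1 \cdots v_{2k} v_0$. Alternation gives $\ell(v_{2k}) = \ell(v_0)$, and the closing edge forces $\ell(v_{2k})\ell(v_0) = \nu$, whence $\ell(v_0)^2 = \nu$ and $\ell(v_0) = \sqrt\nu$. Since then $\nu/\ell(v_0) = \sqrt\nu$ as well, both parities collapse to the same value, so propagating along a walk from $v_0$ to any vertex $w$ (which exists by connectedness) gives $\ell(w) = \sqrt\nu$ for all $w$. For part (b), bipartiteness means every walk from a fixed $v_1 \in V_1$ to a vertex $v$ has a parity determined solely by whether $v \in V_1$ or $v \in V_2$, with no odd cycle to create a conflict; writing $\alpha := \ell(v_1)$, alternation then gives $\ell(v) = \alpha$ for $v \in V_1$ and $\ell(v) = \nu/\alpha$ for $v \in V_2$. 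Finally $\brho'(v) := \hat\brho(v)\sqrt\nu$ has $\|\brho'(v)\| = \sqrt\nu$ for all $v$, and since $\hat\brho' = \hat\brho$ the normalized dot products are unchanged, so $\brho'$ is again an angle representation — this last point is just the earlier remark that rescaling each vector by a positive scalar preserves angle representations.

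The only real subtlety is the \emph{consistency} of the length assignment, i.e.\ that the alternating relation $\ell(w) = \nu/\ell(v)$ can be satisfied globally. This is precisely a two-colourability question: it succeeds with a free parameter $\alpha$ when $\Sigma$ is bipartite, whereas when $\Sigma$ is not bipartite the unavoidable odd cycle pins $\alpha = \sqrt\nu$. Everything else is routine walk-chasing using connectedness, so I expect no further difficulty.
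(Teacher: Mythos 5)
Your proposal is correct and follows essentially the same route as the paper, whose (sketched) proof is exactly to apply the edge relation $\|\brho(v)\|\,\|\brho(w)\| = \nu$ and propagate it around an odd circle in the non-bipartite case and an even one otherwise. Your write-up merely fills in the routine walk-chasing details that the paper leaves implicit.
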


\begin{proof}[Idea of Proof]
Apply the equation $\|\brho(v)\| \|\brho(w)\| = \nu$ for an edge $vw$, propagated around an odd circle if there is one, and an even circle if there is not.
\end{proof}

In a \emph{normalized} Gramian angle representation all vectors have the same length.  Then the Gram matrix of the representing vectors is $A(\Sigma)+\nu I$.  By Proposition \ref{P:gramian} any Gramian angle representation becomes normalized if we replace $\brho(v)$ by $\sqrt\nu\hat\brho(v)$.  Henceforth we assume all Gramian representations are normalized.  

\begin{thm}\label{T:grame}
A signed simple graph $\Sigma$ has a Gramian (or, anti-Gramian) angle representation with constant $\nu$ if and only if the eigenvalues of\/ $\Sigma$ are $\geq -\nu$ (respectively, $\leq \nu$).
\end{thm}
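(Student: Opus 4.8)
The plan is to convert this geometric existence question into a single statement about positive semidefiniteness and then read off the eigenvalue condition. First I would reduce to the normalized case: by Proposition \ref{P:gramian} and the remark following it, any Gramian angle representation may be taken normalized, so that every $\brho(v_i)$ has the common length $\sqrt{\nu}$. This is the decisive point, because it pins down the diagonal of the Gram matrix. Since $\Sigma$ is simple, $A(\Sigma)$ has zero diagonal, and (as already recorded just before the theorem) the Gram matrix $G$ of a normalized Gramian representation is then exactly $G = A(\Sigma) + \nu I$: its off-diagonal $(i,j)$ entry is $\brho(v_i)\cdot\brho(v_j) = a_{ij}$ by the Gramian condition, and each diagonal entry is $\|\brho(v_i)\|^2 = \nu$.

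For the forward direction I would argue that if such a representation exists, then $G$, being the Gram matrix of real vectors, is positive semidefinite. The eigenvalues of $G = A(\Sigma)+\nu I$ are precisely $\lambda+\nu$ as $\lambda$ ranges over the eigenvalues of $A(\Sigma)$, so positive semidefiniteness of $G$ is equivalent to $\lambda \geq -\nu$ for every eigenvalue $\lambda$ of $\Sigma$. For the converse I would invoke the standard fact that a real symmetric positive semidefinite matrix factors as $M\transpose M$ (via its spectral square root); taking the columns of $M$ to be the vectors $\brho(v_i)$ recovers $\brho(v_i)\cdot\brho(v_j) = a_{ij}$ for $i\neq j$ and $\|\brho(v_i)\|^2 = \nu$, and since all lengths equal $\sqrt{\nu}$ the angle identity $\hat\brho(v_i)\cdot\hat\brho(v_j) = a_{ij}/\nu$ follows automatically, yielding a genuine normalized Gramian angle representation.

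The anti-Gramian case reduces at once: an anti-Gramian representation of $\Sigma$ is by definition a Gramian representation of $-\Sigma$, so by the case just settled it exists if and only if the eigenvalues of $-\Sigma$ are $\geq -\nu$. Since $A(-\Sigma) = -A(\Sigma)$, those eigenvalues are the negatives of the eigenvalues of $\Sigma$, and the condition becomes that the eigenvalues of $\Sigma$ are $\leq \nu$, as claimed.

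I do not anticipate a serious obstacle, since the heart of the argument is the classical equivalence between Gram matrices and positive semidefinite matrices. The one point deserving care is the normalization step: the identification $G = A(\Sigma)+\nu I$ is valid only after every representing vector is rescaled to common length $\sqrt{\nu}$, so I would verify that the rescaling $\brho \mapsto \sqrt{\nu}\,\hat\brho$ loses no generality. It preserves the angle-representation property (it changes lengths, not directions) and preserves the Gramian identity with the \emph{same} constant, since $\big(\sqrt{\nu}\,\hat\brho(v)\big)\cdot\big(\sqrt{\nu}\,\hat\brho(w)\big) = \nu\,(a_{vw}/\nu) = a_{vw}$; thus $\nu$ is unchanged and the reduction to the normalized case is justified.
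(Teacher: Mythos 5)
Your proposal is correct and follows essentially the same route as the paper: reduce to a normalized representation, identify its Gram matrix as $A(\Sigma)+\nu I$, and use the equivalence between positive semidefinite symmetric matrices and Gram matrices in both directions. You spell out the normalization step and the anti-Gramian reduction more explicitly than the paper does, but the core argument is identical.
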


\begin{proof}
This proof is based on the treatment of equiangular lines by Seidel et al.\ (see, e.g., Seidel (1976a, 1995a) or Godsil and Royle (2001a)).  

We consider a normalized Gramian angle representation.  The Gram matrix $A(\Sigma)+\nu I$ has an eigenvalue $\lambda+\nu$ for each eigenvalue $\lambda$ of $A(\Sigma)$.  As a Gram matrix has non-negative eigenvalues, every $\lambda \geq -\nu$.

Now assume $\Sigma$ has eigenvalues $\geq -\nu$.  The matrix $A(\Sigma)+\nu I$ is positive semidefinite and symmetric.  It follows by matrix theory that $A(\Sigma)+\nu I$ is the Gram matrix of vectors $\bv_i \in \bbR^n$ for $v_i \in V$, i.e., $a_{ij} + \nu\delta_{ij} = \bv_i\cdot\bv_j$ for all $i,j$.  Then $\brho(v_i) := \bv_i$ is a normalized Gramian angle representation of $\Sigma$ with constant $\nu$.
\end{proof}

\begin{exam}\label{X:lgrepn}
The mapping $\bx: E(\Sigma) \to \bbR^n$ of Section \ref{vectors}, which gives a vector representation of $\Sigma$, gives an anti-Gramian angle representation of $\bar\Lambda(\Sigma)$.  We take $\brho := \bx$, since $V(\bar\Lambda(\Sigma)) = E(\Sigma)$.  The constant is $\nu = 2$ and the angle is $\theta = \pi/3$.  
Every vector $\bx(e)$ has the same length, $\sqrt2$, and the inner products are $+1$ if $\sigma_\Lambda(ef) = -$, in which case the angle between $\bx(e)$ and $\bx(f)$ is $\pi/3$, and $-1$ if $\sigma_\Lambda(ef) = +$, in which case the angle between $\bx(e)$ and $\bx(f)$ is $2\pi/3$.  (The signs reverse because the representation is anti-Gramian.)

The vectors $\bx(e)$ are some of the vectors of the root system $D_n$ mentioned in Section \ref{completex}.  The image of the representation of $\bar\Lambda(\pm K_n)$ is all of $D_n$.  The treatment of $\bx$ as an angle representation of a reduced line signed graph is implicit in Cameron, Goethals, Seidel, and Shult (1976a), but explicit line graphs of signed graphs only came later, in Zaslavsky (1979a, 1984c, 2010b). 
\end{exam}

The root system $E_8$ is defined by 
$$
E_8 := D_8 \cup \big\{ \tfrac12(\eps_1,\ldots,\eps_8) \in \bbR^8 : \eps_i \in \{\pm1\},\ \eps_1\cdots\eps_8 = +1 \big\}.
$$

\begin{thm}\label{T:anglelg}
An anti-Gramian angle representation of\/ $\Sigma$ with $\nu=2$ is a vector representation of a reduced line graph $\bar\Lambda(\Sigma)$, or else $|V(\Sigma)| \leq 184$ and the representation is in $E_8$.
\end{thm}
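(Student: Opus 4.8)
The plan is to strip the hypothesis down to a statement about a set of fixed-length vectors with restricted inner products, invoke the classical root-system dichotomy, and then read off the two alternatives as the line-graph case and the $E_8$ case. First I would normalize: applying Proposition \ref{P:gramian} to $-\Sigma$, I may assume every $\brho(v)$ has length $\sqrt2$. Since $\brho$ is anti-Gramian for $\Sigma$ with $\nu=2$ (equivalently Gramian for $-\Sigma$), the relation $A(-\Sigma)=-A(\Sigma)$ gives
$$\brho(v)\cdot\brho(w) = -a_{vw} \in \{0,+1,-1\}$$
for distinct $v,w$, the value being $0$, $+1$, or $-1$ according as $vw$ is a non-edge, a negative edge, or a positive edge. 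Because $\Sigma$ is simple, no inner product equals $\pm2$, so distinct vertices receive distinct, non-parallel vectors. Hence $\Im\brho$ is a set of norm-$2$ vectors, no two parallel, with pairwise inner products in $\{0,\pm1\}$. I may assume $\Sigma$ is connected (handling components separately), and then this set is indecomposable, since adjacent vertices have nonzero inner product and distinct components would force orthogonal spans.

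The key external input is the classification of such configurations inside simply-laced root systems: an indecomposable set of norm-$2$ vectors with pairwise inner products in $\{0,\pm1\}$ is contained in an irreducible root system of type $A_n$, $D_n$, $E_6$, $E_7$, or $E_8$ (Cameron--Goethals--Seidel--Shult (1976a)). Because $A_n\subseteq D_{n+1}$ and $E_6,E_7\subseteq E_8$, exactly one of two cases occurs: either $\Im\brho$ lies in some $D_N$, or $\Im\brho\subseteq E_8$.

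In the $D_N$ case I would reconstruct the source signed graph from the edge--vector correspondence of Section \ref{completex}: every vector of $D_N$ is $\bx(e)$ for a unique edge $e$ of $\pm K_N$, with $\pm(\B_j-\B_i)$ the positive edge $v_iv_j$ and $\pm(\B_j+\B_i)$ the negative edge $v_iv_j$. Thus $v\mapsto e_v$, where $\brho(v)=\bx(e_v)$, is an injection of $V(\Sigma)$ into $E(\pm K_N)$ (injective because our vectors are pairwise non-parallel, hence give distinct edges); let $\Sigma'$ be the spanning subgraph of $\pm K_N$ with edge set $\{e_v\}$. By Theorem \ref{T:lga} in the Gram-matrix form of Example \ref{X:lgrepn}, $\bx(e_v)\cdot\bx(e_w) = -a_{e_ve_w}\big(\bar\Lambda(\Sigma')\big)$, so comparing with $\brho(v)\cdot\brho(w)=-a_{vw}(\Sigma)$ forces $a_{vw}(\Sigma)=a_{e_ve_w}\big(\bar\Lambda(\Sigma')\big)$ for all $v,w$. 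Since both are signed simple graphs, $v\mapsto e_v$ is an isomorphism $\Sigma\cong\bar\Lambda(\Sigma')$ under which $\brho$ becomes the edge-vector representation $\bx$ of Example \ref{X:lgrepn}; this is the first alternative of the theorem.

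In the remaining case $\Im\brho\subseteq E_8$. As $E_8$ is a finite root system and our vectors are pairwise non-parallel roots, $|V(\Sigma)|$ is bounded, and the sharp value $184$ results from enumerating the admissible configurations of norm-$2$ vectors of $E_8$ with inner products in $\{0,\pm1\}$. The main obstacle is precisely this classification step: proving from scratch that an indecomposable norm-$2$ line system with inner products in $\{0,\pm1\}$ embeds into one of $A_n,D_n,E_6,E_7,E_8$ is the deep Cameron--Goethals--Seidel--Shult theorem, and extracting the explicit bound $184$ in the $E_8$ alternative rests on the detailed root enumeration rather than any soft argument; both are best invoked from the literature rather than reproved here.
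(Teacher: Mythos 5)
Your proposal is correct and follows essentially the same route as the paper's proof: invoke the Cameron--Goethals--Seidel--Shult classification to reduce to the dichotomy $D_N$ versus $E_8$, identify the $D_N$ case with the edge-vector representation of a reduced line graph via the Gram-matrix computation of Theorem \ref{T:lga}, and bound the order in the $E_8$ case by the number of antipodal pairs of roots. The only difference is that you spell out the normalization, the injectivity of $v\mapsto e_v$, and the adjacency-matrix comparison that the paper compresses into ``it is easy to verify.''
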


\begin{proof}
As Vijayakumar (1987a) observed, Cameron, Goethals, et al.\ (1976a) implies that an anti-Gramian angle representation of $\Sigma$ having $\nu=2$ is, after choosing the appropriate coordinate system, either in $D_n$ for some $n > 0$ or in $E_8$.  If the representation is in $D_n$, then there is a signed graph $\Sigma'$ with vertex set $E(\Sigma)$ whose vector representation $\bx: E \to \bbR^n$ is the same as $\brho$, and it is easy to verify that $\Sigma'$ is a reduced line graph of $\Sigma$.  

If the representation is in $E_8$, the order of $\Sigma$ cannot be greater than the number of pairs of opposite vectors in $E_8$, which is $184$ because $|D_n| = n(n-1)$ and the number of choices for $(\eps_1,\ldots,\eps_8)$ is $2^7$.
\end{proof}

Cameron, Goethals, et al.\ used Gramian angle representations of unsigned graphs to classify the graphs $\Gamma$ whose eigenvalues are $\geq -2$.  They obtained the all-positive and all-negative cases of the preceding theorem.  (The all-positive case corresponds, in our terminology, to a Gramian representation of $-\Gamma$, and the all-negative case to a Gramian representation of $+\Gamma$, since the theorem concerns anti-Gramian representations.)  Then G.R.\ Vijayakumar and his collaborators extended that work to anti-Gramian representations of signed graphs (without line graphs; historically, therefore, there were two independent lines of development treating essentially the same objects: that of line graphs by Zaslavsky and that of angle representations by Vijayakumar et al.).

\begin{cor}\label{T:lgebound}
A signed simple graph has all eigenvalues $\leq 2$ if and only if it is a reduced line graph of a signed graph or it has order $\leq 184$ and has an anti-Gramian angle representation in $E_8$.
\end{cor}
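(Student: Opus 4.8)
The plan is to obtain the corollary as a direct consequence of Theorem \ref{T:grame} (in its anti-Gramian form with $\nu = 2$) together with Theorem \ref{T:anglelg}, drawing on Corollary \ref{C:lgebound} and Example \ref{X:lgrepn} for the reverse implication. No new idea is required; the content lies entirely in chaining the already-established equivalences and in attending to two small normalization points.

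First I would prove the forward implication. Assume the signed simple graph $\Sigma$ has all eigenvalues $\leq 2$. Applying Theorem \ref{T:grame} with $\nu = 2$ in the anti-Gramian case produces an anti-Gramian angle representation of $\Sigma$ with constant $\nu = 2$. Theorem \ref{T:anglelg} then forces one of two alternatives for any such representation: either it exhibits $\Sigma$ as a reduced line graph $\bar\Lambda(\Sigma')$ of some signed graph $\Sigma'$, or $|V(\Sigma)| \leq 184$ and the representation lies in $E_8$. This is precisely the disjunction asserted in the corollary.

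Next I would handle the reverse implication in two cases. If $\Sigma$ is a reduced line graph of a signed graph, then Example \ref{X:lgrepn} shows that the edge-vector map $\bx$ is itself an anti-Gramian angle representation of $\Sigma$ with $\nu = 2$, so Theorem \ref{T:grame} immediately bounds the eigenvalues by $2$; equivalently one may cite Corollary \ref{C:lgebound} after noting that reduction deletes only negative digons and hence leaves the adjacency matrix, and thus the spectrum, unchanged. If instead $\Sigma$ has order $\leq 184$ with an anti-Gramian angle representation in $E_8$, I would observe that every root of $E_8$ has squared length $2$ and that distinct, non-antipodal roots have inner products in $\{0, \pm 1\}$; such a representation is therefore an anti-Gramian angle representation with $\nu = 2$, and Theorem \ref{T:grame} again yields eigenvalues $\leq 2$.

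The only point demanding care --- and there is no deep obstacle here, since this is a corollary of the surrounding theory --- is reconciling the several ``$\nu = 2$'' conditions across the cited results. Specifically, one must verify that a representation landing in $E_8$ genuinely meets the length-and-inner-product normalization of an anti-Gramian angle representation with $\nu = 2$ (the root computation above), and that in the line-graph case the passage to the reduced graph does not disturb the eigenvalue bound (immediate from the invariance of the adjacency matrix under cancellation of negative digons noted in Section \ref{ak}). With these two checks in hand, the corollary follows cleanly.
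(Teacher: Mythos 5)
Your proof is correct and follows essentially the same route as the paper: the forward direction is exactly the chaining of Theorem \ref{T:grame} (anti-Gramian case, $\nu=2$) with Theorem \ref{T:anglelg} that the paper leaves implicit, and your backward direction reproduces the paper's one-line sufficiency argument that any angle representation in $D_n$ (via Example \ref{X:lgrepn}) or $E_8$ automatically has $\nu=2$, whence Theorem \ref{T:grame} applies. Your two normalization checks (the $E_8$ inner products and the invariance of the adjacency matrix under reduction) are exactly the points the paper relies on, so nothing further is needed.
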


\begin{proof}[Proof of Sufficiency]
Vectors in $D_n$ or $E_8$ have angles $\pi/3$, $2\pi/3$, and $\pi/2$, and any such system of vectors with norm $\sqrt2$ is contained in $D_n$ or $E_8$, therefore an angle representation in $D_n$ or $E_8$ has $\nu=2$.
\end{proof}

Thus, eigenvalues determine whether a signed graph is a line graph, with a finite number of exceptions of explicitly bounded order!  More precisely, the number of signed simple graphs with all eigenvalues $\leq 2$ that are not reduced line graphs of signed graphs is finite and not too large (but not too small either).

We conclude with a description of the crucial example that led to Cameron, Goethals, Seidel, and Shult (1976a) and that also shows why signed graphs are truly the natural domain for line graphs.

\begin{exam}\label{X:glg}
Let $\Gamma$ be a simple graph with $V = \{v_1,\ldots,v_n\}$.  A \emph{cocktail party graph} $\CP_m$ is $K_{2m} \setm M$ where $M$ is a perfect matching.  Hoffman (1977a) defined the \emph{generalized line graph} $\Lambda(\Gamma;m_1,\ldots,m_n)$, where $m_i \in \bbZ_{\geq0}$, as the disjoint union $\Lambda(\Gamma) \cupdot \CP_{m_1} \cupdot \cdots \cupdot \CP_{m_n}$ with additional edges edges from every vertex in $\CP_{m_i}$ to every $v_iv_j \in V(\Lambda(\Gamma))$.  (It is the line graph $\Lambda(\Gamma)$ if all $m_i=0$.)  
Hoffman showed that a generalized line graph has least eigenvalue $\geq -2$, just like a line graph.  

Here is $\Lambda(C_4; 1,2,0,0)$:\qquad
\raisebox{-2cm}{\includegraphics[scale=.8]{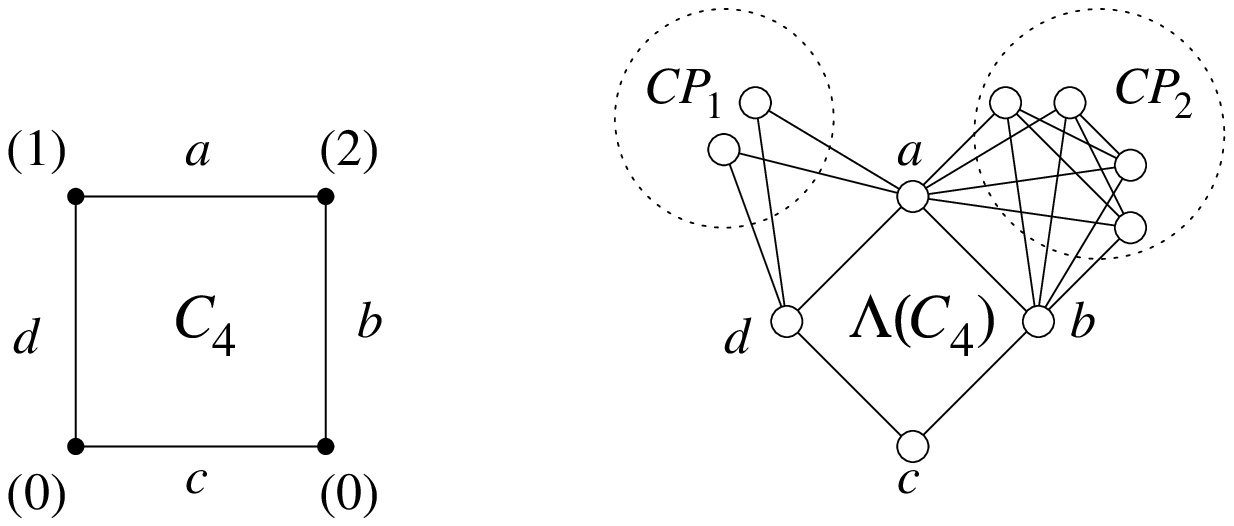}}
\bigskip

Hoffman's eigenvalue theorem is an easy consequence of Corollary \ref{T:lgebound}.  We deduce it by showing how $\Lambda(\Gamma;m_1,\ldots,m_n)$ is a reduced line graph of a signed graph.  Let $\Gamma(m_1,\ldots,m_n)$ be $\Gamma$ with $m_i$ negative digons attached to $v_i$.  The other vertex of each negative digon is a new vertex; thus, $\Gamma(m_1,\ldots,m_n)$ has order $n+m_1+\cdots+m_n$ and $|E|+2(m_1+\cdots+m_n)$ edges.  Then $-\Gamma(m_1,\ldots,m_n)$ is $-\Gamma$ with the negative digons adoined (since a negated negative digon is still a negative digon), and $\bar\Lambda(-\Gamma(m_1,\ldots,m_n)) = - \Lambda(\Gamma;m_1,\ldots,m_n)$.  The eigenvalue property of $\Lambda(\Gamma;m_1,\ldots,m_n)$ follows immediately from Theorem \ref{T:grame}.

Here is the construction of $-\Lambda(C_4;1,2,0,0)$ as the reduced signed line graph $\bar\Lambda(-C_4(1,2,0,0))$.  It begins with all negative edges extraverted in  $-\Lambda(C_4;1,2,0,0)$:\\

\begin{center}
\parbox{2in}{
\begin{center}
$-C_4(1,2,0,0)$
\\[8pt]
{\includegraphics[scale=.8]{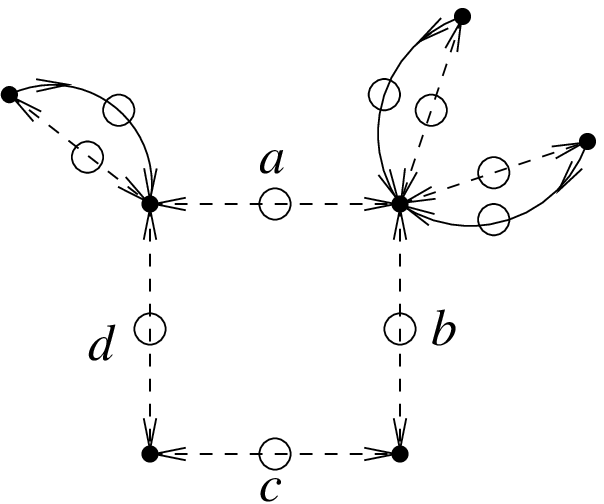}}
\end{center}
}
\qquad
\parbox{2.7in}{
\begin{center}
$-\Lambda(C_4;1,2,0,0) = \bar\Lambda(-C_4(1,2,0,0))$
\\[8pt]
{\includegraphics[scale=.8]{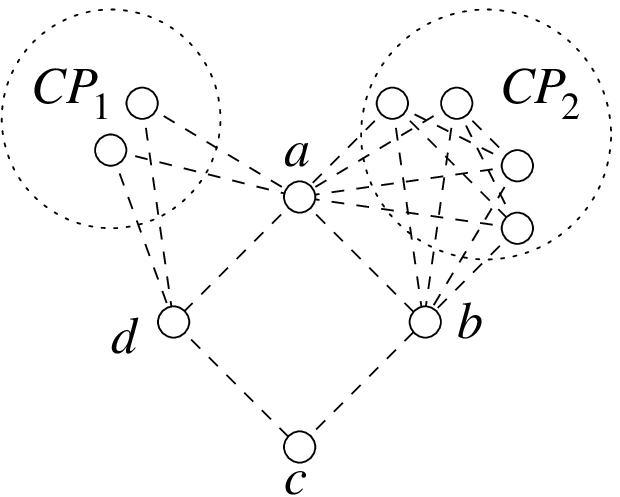}}
\end{center}
}
\end{center}
\end{exam}


\section{Acknowledgements}\label{ack}

This paper is based on lecture notes from the International Workshop on Set-Valuations, Signed Graphs, Geometry and Their Applications (IWSSG-2011), Mananthavady, Kerala, 2--6 September 2011, which in turn were enlarged from lecture notes of  the 2010 workshop at Pala and Mananthavady.  For that delightful 2010 workshop I thank Professor Sr.\ Germina K.A.\ and Professor A.M.\ Mathai.


\renewcommand\refname{Keys to the Literature}

\renewcommand\refname{Additional Bibliography}

\end{document}